\begin{document}

\title{From $(\mathbb{Z},X)$-modules to homotopy cosheaves}
\thanks{The author was supported by Michael Weiss' Alexander von Humboldt Professorship.}

\author{Filipp Levikov
}

\address{
              Freie Universit\"{a}t Berlin, Mathematisches Institut\\
              Arnimallee 7, 14195 Berlin}
              \email{filipp.levikov@fu-berlin.de}           

\keywords{$L$-theory, homotopy cosheaves, total surgery obstruction}\subjclass[2010]{57R67, 19J25, 19G24,  55U15}
\begin{abstract}
We construct a functor from the category of $(\mathbb{Z},X)$-modules of Ranicki (cf. \cite{Ra92}) to the category of homotopy cosheaves of chain complexes of Ranicki-Weiss (cf. \cite{RaWei10}) inducing an equivalence on $L$-theory. The $L$-theory of $(\mathbb{Z},X)$-modules is central in the algebraic formulation of the surgery exact sequence and in the construction of the total surgery obstruction by Ranicki, as described in \cite{Ra79}. The symmetric $L$-theory of homotopy cosheaf complexes is used by Ranicki-Weiss in \cite{RaWei10}, to reprove the topological invariance of rational Pontryagin classes. The work presented here may be considered as an addendum to the latter article and suggests some translation of ideas of Ranicki into the language of homotopy chain complexes of cosheaves.

\end{abstract}

\maketitle
\newcommand{\SymL}{\mathbf{L}^{\bullet}}
\newcommand{\QuadL}{\mathbf{L}_{\bullet}}
\newcommand{\LO}[1]{[{#1}]_{\mathbf{L}}}
\newcommand{\cat}[1]{\mathbb{#1}}
\newcommand{\cattwo}[1]{\mathcal{#1}}
\newcommand{\md}[1]{\mid{#1}\mid}
\newcommand{\Open}{\mathcal{O}}
\newcommand{\rOpen}{\mathcal{O}^{\textit{restr}}}
\newcommand{\ChAb}{\mathop{Ch}(\mathcal{A}b)}
\newcommand{\Cone}{\mathop{Cone}}
\newcommand{\restr}[1]{{#1}^\textit{restr}}
\newcommand{\opst}[1]{\mathring{\mathop{st}}({#1})}
\newcommand{\coend}[1]{\int^{#1}}
\newcommand{\hocoend}[1]{\mathrm{ho}\!\!\int^{#1}}

\newcommand{\SC}[2]{C[{#1}]\otimes S_*({#1}\cap{#2})}
\newcommand{\SD}[2]{D[{#1}]\otimes S_*({#1}\cap{#2})}
\newcommand{\SCxrel}[4]{({#1}[{#2}]/\!\!\!\sum_{\rho\not\in {#3}}{#1}({#2}))\otimes S_*({#2}\cap{#4},{#2}\cap{#4}\setminus{#3})}

\newcommand{\hocolim}[1][]{\mathop{\mathrm{hocolim}}_{#1}}
\newcommand{\holim}[1][]{\mathop{\mathrm{holim}}\limits_{#1}}
\newcommand{\holimtwo}[2]{\mathop{\mathrm{holim}}_{\substack{{#1}\\{#2}}}}
\newcommand{\limtwo}[2]{\mathop{\mathrm{lim}}_{\substack{{#1}\\{#2}}}}
\newcommand{\colimtwo}[2]{\mathop{\mathrm{colim}}_{\substack{{#1}\\{#2}}}}
\newcommand{\hocolimtwo}[2]{\mathop{\mathrm{hocolim}}_{\substack{{#1}\\{#2}}}}
\newcommand{\sumtwo}[2]{{\mathop{\sum}}_{\substack{{#1}\\{#2}}}}

\newcommand{\undercat}[2]{{#1}\!\downarrow\!{#2}}
\newcommand{\Hom}{\mathrm{Hom}}
\newcommand{\coeq}{\mathop{\mathrm{coeq}}}
\newcommand{\eq}{\mathop{\mathrm{eq}}}
\newcommand{\Tot}{\mathcal{T}\!\!ot}
\newcommand{\srep}{\mathcal{S}\!rep}
\newcommand{\csrep}{c\mathcal{S}\!rep}

\newcommand{\boxtimesRA}{\mathop{\boxtimes_{\mathcal{K}_{(\mathbb{Z},X)}}}}
\newcommand{\ZZtwo}{\mathbb{Z}[\mathbb{Z}_2]}
\newcommand{\twoline}[2]
{\left\{\begin{tabular}{l}
  {#1} \\
  {#2} \\
   \end{tabular}
\right.
}
\newcommand{\id}{\mathop{\mathrm{id}}}
\newcommand{\catZXmod}{\mathcal{K}_{(\mathbb{Z},X)}}
\theoremstyle{definition}
\newtheorem{definition}{Definition}[section]
\newtheorem{example}[definition]{Example}
\newtheorem{remark}[definition]{Remark}
\newtheorem{corollary}[definition]{Corollary}
\newtheorem{theorem}[definition]{Theorem}
\theoremstyle{plain}
\newtheorem{lemma}[definition]{Lemma}
\newtheorem{oplemma}[definition]{! Lemma}
\newtheorem{proposition}[definition]{Proposition}


 \newcommand{\colim}[1][]{\mathop{\mathrm{colim}}\limits_{#1}}
 \newcommand{\real}[1]{\vert{#1}\vert}

\section{Introduction}
\label{intro}
In \cite{RaWei10}, Ranicki and Weiss reprove the topological invariance of rational Pontrjagin classes by constructing for a topological manifold $M$ a symmetric $L$-theory orientation whose rationalization is identified with the Poincar\'e dual of the total $L$-class. For a locally compact, Hausdorff and separable
space $X$, they introduce the category $\mathcal{D}_X$ of
``cosheaf''-like complexes of abelian groups and a framework for defining $L$-theory in this setting. 
All this structure is assembled into what in the following is called the weak algebraic bordism category $\cattwo{K}_X$. 
Associating to $X$ the symmetric $L$-theory spectrum of $\cattwo{K}_X$ 
gives a functor from spaces to spectra\linebreak
\[
X \mapsto \SymL(\mathcal{K}_X)
\]
which is homotopy invariant and excisive and thus is equivalent to symmetric $L$-homology. Although not dealt with in \cite{RaWei10} the corresponding functor to quadratic $L$-theory 
\[
X \mapsto \QuadL(\mathcal{K}_X)
\] 
is constructed in an analogous way. 
On the
other hand, 
for a realisation of a simplicial complex $X$ the framework of \cite{Ra92} leads to
the definition of a symmetric (resp. quadratic) $L$-theory spectrum of the algebraic bordism category $\cattwo{K}_{(\mathbb{Z},X)}$ \footnote{In the original source \cite{Ra92} the category is denoted by $\Lambda_\ast(\mathbb{Z},X)$ with the cagegory of $(\mathbb{Z},X)$-modules $(\cat{A}(\mathbb{Z},X))$ being the underlying additive category with chain duality. See section \ref{Algebraic Bordism Categories Basics} for the notation.} of chain
complexes of $\mathbb{Z}$-modules over $X$. For a simplicial complex there
are functors to spectra
\[
X \mapsto \SymL(\cattwo{K}_{(\mathbb{Z},X)}) \qquad \textnormal{ and } \qquad X \mapsto \QuadL(\cattwo{K}_{(\mathbb{Z},X)})
\]
where the quadratic $L$-homology description is even more important since it is part of the construction of Ranicki's total surgery obstruction $s(X)$. Although it is never put like this in \cite{RaWei10} the authors set out to achieve the goal described in \cite[p.19]{Ra92}: ``the ultimate version of the algebraic $L$-theory assembly map should be topologically invariant, using the language of sheaf theory [\ldots]''. Since the symmetric (resp. quadratic) $L$-groups of the above categories (i.e. the homotopy groups of the corresponding $L$-theory spectra) are just the
$\SymL$- (resp. $\QuadL$-\nolinebreak) homology groups, they coincide abstractly. Let $\cat{B}(\mathbb{Z},X)$ denote the category of chain complexes underlying $\cattwo{K}_{(\mathbb{Z},X)}$. The goal of this article is to construct an explicit, geometric natural transformation of functors
\[
\cat{B}(\mathbb{Z}, \mathbf{-})\rightarrow \mathcal{D}_{(\mathbf{-})} 
\]
inducing an equivalence on $\SymL$ and $\QuadL$ for every polyhedron $X$.
The objects in $\cat{B}(\mathbb{Z},X)$ can be viewed as covariant
functors, i.e. cosheaves over open stars of $X$. The constructed
equivalence is geometric in the sense that it is given by canonically extending a $(\mathbb{Z},X)$-module to a
homotopy cosheaf. 

In the first four sections we recall all the background definitions. In section 2 we clarify what our framework for $L$-theory is going to be. In sections 3 and 4 we collect the definitions of the categories $\cattwo{K}_{(\mathbb{Z},X)} $ resp. $\mathcal{K}_X$. In the remaining sections, the original work is presented. For a fixed simplicial complex $X$ we construct a functor from  $\mathbb{B}(\mathbb{Z},X)$ to $\mathcal{D}_X$ giving rise to the functor $\cattwo{K}_{(\mathbb{Z},X)}\rightarrow\cattwo{K}_X$ in section 5 and prove its naturality. In section 6 we define a natural transformation between the derived products of the latter categories and show that it preserves non-degeneracy. The final theorem is stated in section 7. A few remarks on earlier work on this subject are made in section 8. The final section is an appendix containing some remarks on homotopy (co)limits in the category of chain complexes.
\section*{acknowledgements}
This article is part of the author's PhD thesis written at the University of Aberdeen under the supervision of Michael Weiss. I am indebted to Michael Weiss for sharing his knowledge of $L$-theory. I am also grateful to Andrew Ranicki for enlightening conversations. Last but not least I thank the anonymous referee for pointing out an inconsistency in section 8. 
\section{Remarks on $L$-theory}\label{sec:2}
\subsection{\bf{$L$-theory of additive categories without explicit chain duality}}\label{subsec:2.1}
In \cite{Ra92} a very general framework for $L$-theory is given. An
algebraic bordism category consists of an additive category
$\mathbb{A}$ with chain duality, a subcategory of the category of chain complexes in
$\mathbb{A}$ and a subcategory of ``contractible'' complexes.
Symmetric and quadratic $L$-groups as well as the corrensponding
spectra are defined for every such category. The most natural way of
comparing the constructions of \cite{Ra92} and \cite{RaWei10} would
be to construct a functor of algebraic bordism categories and to
show that it induces an isomorphism on $L$-groups. However, the
structure of an algebraic bordism category is unsuitable for the
homotopy cosheaves of \cite{RaWei10}. It turns out to be difficult
to define a chain duality: the duals are only given implicitly since the
objects are not finitely generated in general. Instead, in
\cite{RaWei10} a slight modified setting is presented. In the
following we will only deal with $L$-theory in this setting.

A chain duality is needed to pass from chain complexes in
$\mathbb{A}$ to chain complexes of abelian groups. When there is a
chain complex  and an action of $\mathbb{Z}/2$ on it, its homotopy
fixed points (resp. homotopy orbits) can be considered and the rest
is \textit{as usual} as $L$-theorists would put it. That is exactly what the formalism of \cite{RaWei10}
establishes by defining a ``chain product''. The crucial properties of a product suitable for doing $L$-theory are extracted in \cite{WW98,WW00}. Therefore it is not surprising that the axioms of a chain product below resemble very much those for an $SW$-product. In our situation however the underlying category is still additive, so in fact the main difference to \cite{Ra92} seems to lie in the lack of an explicit duality. We elaborate on this in the following. 

\begin{definition}
Given an additive category $\mathcal{A}$ consider the category $\mathcal{B}(\mathcal{A})$
of chain complexes in $\mathcal{A}$ bounded from below and from above. Let $\mathcal{C}$ be a full subcategory closed under mapping cones and containing all contractible complexes in $\mathcal{B}(\mathcal{A})$. A complex in $\mathcal{B(A)}$ will be called $\mathcal{C}$-\textit{contractible} if and only if it belongs to $\mathcal{C}$. A morphism in $\mathcal{B(A)}$ will be called a $\mathcal{C}$-\textit{equivalence} or simply \textit{homotopy equivalence} if and only if its mapping cone is $\mathcal{C}$-contractible. Further let $\mathcal{D}$ be a full subcategory of $\mathcal{B}(\mathcal{A})$ closed under
suspension, desuspension, homotopy equivalence, direct sum and
mapping cone. A \textit{chain product}\footnote{This should not be confused with the chain product of \cite[Def. 5.3]{RaWei12}. Our chain product is per definition a bifunctor on chain complexes in $\mathcal{A}$.} on $\mathcal{D}$ is a functor to chain complexes of abelian groups
\[
\mathcal{D}\times\mathcal{D}\rightarrow Ch(\mathcal{A}b),
(C,D)\mapsto C\boxtimes D
\]
satisfying
\begin{enumerate}
\item for $D\in\mathcal{D}$, $C\mapsto C\boxtimes D$ takes
$\mathcal{C}$-contractible objects to contractible ones and preserves homotopy
pushouts,
\item there is a binatural isomorphism $\tau: C\boxtimes
D\rightarrow D\boxtimes C$ and $\tau^2=id$,

\end{enumerate}
The tuple $(\cattwo{A},\mathcal{D},\boxtimes)$ is called an additive category with chain product.
\end{definition}
\begin{remark}
 The category $\mathcal{B}(\mathcal{A})$ is a model category with cofibrations given by valuewise split injections and weak equivalences given by chain homotopy equivalences which are $\mathcal{C}$-equivalences. 
 We will write $\mathcal{HB(A)}$ for the corresponding homotopy category, i.e. for the localisation of $\mathcal{B(A)}$ with respect to $\mathcal{C}$-equivalences. We will write $\mathcal{HD}$ for the correpsonding homotopy category of $\mathcal{D}$ which necessarily becomes a (triangulated) subcategory of $\mathcal{HB(A)}$.
Since homology is homotopy invariant there is an induced bifunctor
\[
\mathcal{HD}\times \mathcal{HD}\rightarrow \mathcal{A}b, \quad (C,D)\mapsto H_0(C\boxtimes D).
\]
\end{remark}
\begin{definition}
In the situation of the previous definition we call $\mathcal{K}=(\mathcal{A},\mathcal{D},\mathcal{C},\boxtimes)$ a \textit{weak algebraic bordism category} if for each $C\in\mathcal{D}$ the functor
\[
D\mapsto H_0(C\boxtimes D)
\]

\[
D\mapsto H_n(C\boxtimes D)
\]
become corepresentable in $\mathcal{HD}$ for all $n$ with corepresenting objects given by the complex $TC[-n]=\Sigma^n TC$.
\end{definition}

\begin{definition}
 An $n$-cycle $\phi$ in $C\boxtimes D$ is called nondegenerate if and only if its adjoint $\Sigma^n TC \rightarrow D$ is a homotopy equivalence.
\end{definition}
There is a $\mathbb{Z}_2$-action on $C\boxtimes C$ via the operator $\tau$. Let $W$ denote the standard free $\mathbb{Z}[\mathbb{Z}_2]$-module resolution of the trivial $\ZZtwo$-module $\mathbb{Z}$. In fact due to well known homological algebra any resolution of $\mathbb{Z}$ by projective $\ZZtwo$-modules is sufficient in the following. 
\begin{definition}
Denote by  $(C\boxtimes C)^{h\mathbb{Z}_2}$ the homotopy fixed points of $C\boxtimes\nolinebreak C$ given by
\[
\Hom_{\ZZtwo}(W,C\boxtimes C).
\] 
If $f:C\rightarrow D$ is a map of chain complexes denote by $f^{h\mathbb{Z}_2}$ the corresponding induced map  $(C\boxtimes C)^{h\mathbb{Z}_2}\rightarrow (D\boxtimes D)^{h\mathbb{Z}_2}$. For a chain $\phi\in(C\boxtimes C)^{h\mathbb{Z}_2}$ we will indicate by $\phi_0$ the projection\footnote{This is given by the image under $\phi$ of the generator 1 in $W_0$} to $C\boxtimes C$. We call a cycle $\phi$ in 
$(C\boxtimes C)^{h\mathbb{Z}_2}$ nondegenerate if and only if $\phi_0$ is nondegenerate.

A symmetric algebraic Poincar\'e complex (SAPC) of dimension $n$ in $\mathcal{D}$ is a pair $(C,\phi)$  with $C$ a chain complex in $\mathcal{D}$ and $\phi$ a nodegenerate cycle in $(C\boxtimes C)^{h\mathbb{Z}_2}$. A symmetric algebraic Poincar\'e pair (SAPP) of dimension $n+1$ is a triple 
\[
(f:C\rightarrow D,\delta\phi,\phi)
\] 
with $f$ a map of chain complexes in $\mathcal{D}$ and $(\delta\phi,\phi)$ a nondegenerate cycle in \linebreak $\Cone(f^{h\mathbb{Z}_2})$
. The last condition means that $\phi$ is an $n$-cycle in $(C\boxtimes C)^{h\mathbb{Z}_2}$, $\delta\phi$ an $(n+1)$-chain in $(D\boxtimes D)^{h\mathbb{Z}_2}$ satisfying $f^{h\mathbb{Z}_2}(\phi)=\partial\delta\phi$, $\phi$ is nondegenerate in $H_n(C\boxtimes C)$ and the image of $\delta\phi$ is nondegenerate in 
$H_{n+1}(D\boxtimes\Cone(f))$.\\\\
Two symmetric algebraic Poincar\'e complexes $(C,\phi)$ and $(C',\phi')$ are called \textit{bordant} if and only if there exist a SAPP $(C,\delta\phi,\phi\oplus -\phi')$.
 \end{definition}
Analogously one can make the following
\begin{definition}
Denote by  $(C\boxtimes C)_{h\mathbb{Z}_2}$ the homotopy orbits of $C\boxtimes C$ given by 
\[
W\otimes_{\ZZtwo}C\boxtimes C.
\]
For a map $f:C\rightarrow D$ of chain complexes write $f_{h\mathbb{Z}_2}$ for the induced map 
\[
(C\boxtimes C)_{h\mathbb{Z}_2}\rightarrow (D\boxtimes D)_{h\mathbb{Z}_2}.
\]
For a chain $\phi\in(C\boxtimes C)_{h\mathbb{Z}_2}$ let $\phi_0$ the projection\footnote{This is given by projecting $\phi$ to $1\otimes \phi_0$ first, where 1 is the generator of $W_0$} to $C\boxtimes C$. A cycle $\phi$ in 
$(C\boxtimes C)_{h\mathbb{Z}_2}$ is called nondegenerate if and only if $(1+\tau)\phi_0$ is nondegenerate.

A quadratic algebraic Poincar\'e complex (QAPC) of dimension $n$ in $\mathcal{D}$ is a pair $(C,\phi)$  with $C$ a chain complex in $\mathcal{D}$ and $\phi$ a nodegenerate cycle in $(C\boxtimes C)_{h\mathbb{Z}_2}$. A quadratic algebraic Poincar\'e pair (QAPP) of dimension $n+1$ is a triple\\ 
\[
(f:C\rightarrow D,\delta\phi,\phi)
\] 
with $f$ a map of chain complexes in $\mathcal{D}$ and $(\delta\phi,\phi)$ a nondegenerate cycle in \linebreak$\Cone(f_{h\mathbb{Z}_2})$, i.e. $\phi$ is an $n$-cycle in $(C\boxtimes C)_{h\mathbb{Z}_2}$, $\delta\phi$ an $(n+1)$-chain in $(D\boxtimes D)_{h\mathbb{Z}_2}$ satisfying $f^{h\mathbb{Z}_2}(\phi)=\partial\delta\phi$, $\phi$ is nondegenerate in $H_n(C\boxtimes C)$ and the image of $\delta\phi$ is nondegenerate in 
$H_{n+1}(D\boxtimes\Cone(f))$.\\\\
Two quadratic algebraic Poincar\'e complexes $(C,\phi)$ and $(C',\phi')$ are called \textit{bordant} if and only if there exist a QAPP $(C,\delta\phi,\phi\oplus -\phi')$.
 \end{definition}

\begin{definition}
 The $n$-dimensional symmetric $L$-groups $L^n(\mathcal{K})=L^n(\mathcal{D})$ of a weak algebraic bordism category $\mathcal{K}=(\cattwo{A},\cattwo{D},\mathcal{C},\boxtimes)$ are defined to be the bordism groups of $n$-dimensional SAPC's in $\cattwo{D}$. The $n$-dimensional quadratic $L$-groups $L_n(\mathcal{K})=L_n(\mathcal{D})$ are the bordism groups of $n$-dimensional QAPC's in $\cattwo{D}$.
\end{definition} 

The following describes a general principle going back to Quinn of interpreting $L$-groups as homotopy groups of certain $L$-spectra. Details can be found in \cite[Ch12,13]{Ra92} and also in \cite{RaWei12}. The framework in \cite{LM12} is more modern and more general.
\begin{proposition}
 In the above setting one can construct an $\Omega$-spectrum of Kan-$\triangle$-sets $\SymL(\mathcal{K})$ out of $m$-ads of $n$-dimensional SAPC's in $\mathcal{D}$ with the property that
\[
\pi_n(\SymL(\mathcal{K}))=L^n(\mathcal{K}).
\]
Similarly there exists an $\Omega$-spectrum of Kan-$\triangle$-sets $\QuadL(\mathcal{K})$ with the propery
\[
\pi_n(\QuadL(\mathcal{K}))=L_n(\mathcal{K}).
\]
\end{proposition}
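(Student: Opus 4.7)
The plan is to follow the Kan-$\Delta$-set construction of $L$-theory spectra from \cite[Chapters 12--13]{Ra92}, replacing the role of chain duality by the chain product $\boxtimes$ together with its homotopy fixed points and orbits. Since $(C\boxtimes C)^{h\mathbb{Z}_2}$ and $(C\boxtimes C)_{h\mathbb{Z}_2}$ are $\mathbb{Z}_2$-(co)invariants of a functor in $C$ with the same formal properties as Ranicki's duality pairing, the combinatorics of simplices, faces and gluing carries over essentially unchanged, and only the verification of the relevant closure and non-degeneracy statements has to be redone in the weak setting.

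First I would define, for each $n\geq 0$, the $k$-simplices of the $n$-th space $\SymL(\mathcal{K})_n$ to be $n$-dimensional symmetric Poincar\'e $k$-ads in $\mathcal{D}$: an assignment to every face $\sigma$ of $\Delta^k$ of a complex $C(\sigma)\in\mathcal{D}$ together with a cycle $\phi(\sigma)\in (C(\sigma)\boxtimes C(\sigma))^{h\mathbb{Z}_2}$ of degree $n - \dim\sigma$, compatible under face inclusions, and such that each $(C(\sigma),\phi(\sigma))$ is Poincar\'e relative to its own boundary ad. Face and degeneracy operators are the evident restrictions and insertions of zero complexes. The Kan condition is verified by filling horns with iterated algebraic mapping cylinders of the given ad data, using the closure of $\mathcal{D}$ under suspension, desuspension, mapping cone and direct sum. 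That non-degeneracy survives such a gluing uses axiom (1) of a chain product: $\boxtimes$ sends $\mathcal{C}$-contractible inputs to contractible complexes and preserves homotopy pushouts, so the homotopy-pushout formula for the glued cycle still represents an equivalence after adjunction.

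The spectrum structure is produced by the usual dimension-shift: an $n$-dimensional Poincar\'e $k$-ad is reinterpreted as an $(n+1)$-dimensional Poincar\'e $(k+1)$-ad whose new top face is the zero object, which gives the structure map $\SymL(\mathcal{K})_n\to \Omega\SymL(\mathcal{K})_{n+1}$. That this is a weak equivalence, so one really obtains an $\Omega$-spectrum, follows from a standard bordism-theoretic argument: the homotopy fibre identifies with a Kan-$\Delta$-set whose $\pi_0$ vanishes because every Poincar\'e ad with a zero face is null-bordant by its own algebraic mapping cylinder. The identification $\pi_n(\SymL(\mathcal{K}))=L^n(\mathcal{K})$ then reduces to the tautology that a $1$-simplex in $\SymL(\mathcal{K})_0$ joining two vertices is precisely a symmetric algebraic Poincar\'e pair between the corresponding SAPCs, i.e.\ an $L$-theoretic bordism. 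The quadratic spectrum $\QuadL(\mathcal{K})$ is obtained by systematically replacing $(-)^{h\mathbb{Z}_2}$ by $(-)_{h\mathbb{Z}_2}$ throughout, with non-degeneracy measured by $(1+\tau)\phi_0$ as in the definition.

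The main technical obstacle is the Kan/gluing step together with preservation of non-degeneracy: whenever several Poincar\'e ads are glued along a common boundary ad, the cycle one produces on the glued complex must still be non-degenerate, and in the absence of an explicit chain duality this cannot be verified by a direct computation of duals as in \cite{Ra92}. Instead one has to re-express the relevant adjoints homotopically, and it is precisely the homotopy-pushout-preservation clause in the definition of $\boxtimes$, together with the closure properties of $\mathcal{D}$ and $\mathcal{C}$, that plays the role Ranicki's chain duality plays in the original construction.
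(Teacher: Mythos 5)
Your overall route is the one the paper itself relies on: the paper offers no independent proof of this proposition, but points to Quinn's $m$-ad construction as carried out in \cite[Ch.~12--13]{Ra92} and in \cite{RaWei12}, \cite{LM12}, with the chain product $\boxtimes$ together with its homotopy fixed points and orbits standing in for the duality pairing and nondegeneracy measured in the homotopy category, exactly as you propose. So in approach you and the paper agree, and your remarks on where the weak-category axioms (preservation of homotopy pushouts, closure properties of $\mathcal{D}$ and $\mathcal{C}$) replace explicit duals are the right ones.

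There is, however, a concrete slip that makes your key identification fail as written: the degree convention in your ads is backwards. If a $k$-simplex assigns to a face $\sigma$ of $\Delta^k$ a cycle of degree $n-\dim\sigma$, then the top face of a $1$-simplex carries an $(n-1)$-chain, so a $1$-simplex joining two vertices is \emph{not} a symmetric algebraic Poincar\'e pair in the sense of Section \ref{sec:2}, where a bordism of $n$-dimensional SAPCs requires an $(n+1)$-chain $\delta\phi$ on the pair; with your convention $\pi_0$ of the $n$-th space does not compute $L^n(\mathcal{K})$ as defined. The standard convention (as for manifold $m$-ads, for complexes over $\Delta^k$ in \cite{Ra92}, and in the ad theories of \cite{LM12}) has the dimension \emph{increase} with the face, so that the top face of a $1$-simplex is an $(n+1)$-dimensional pair; with that fix your ``tautological'' identification of $1$-simplices with SAPPs, and hence $\pi_n(\SymL(\mathcal{K}))=L^n(\mathcal{K})$, is correct, and the same care is needed when you set up the structure maps, whose direction depends on this convention. (A minor point: a $\triangle$-set has face operators only, so no degeneracies should be specified.) Apart from this indexing issue, your sketch --- Kan filling via glueing/algebraic mapping cylinders using the closure properties of $\mathcal{D}$, preservation of nondegeneracy via the axioms for $\boxtimes$, and the quadratic case by replacing homotopy fixed points with homotopy orbits --- is the standard argument the paper invokes.
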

To compare $L$-groups of different categories we will need the following
\begin{definition}
 Given two weak algebraic bordism categories $\mathcal{K}=(\mathcal{A},\mathcal{D},\mathcal{C},\boxtimes)$ and $\mathcal{K}'=(\mathcal{A}',\mathcal{D}',\mathcal{C}',\boxtimes')$. A functor $F:\mathcal{A}\rightarrow\mathcal{A}'$ is called a \textit{functor of weak algebraic bordism categories} if 
\begin{enumerate}
 \item $F$ is exact in the sense that it preserves cofibrations and weak equivalences, takes $\mathcal{C}$ into $\mathcal{C}'$ and $\mathcal{D}$ into $\mathcal{D}'$,
 \item there exists a natural transformation $h=h_{C,D}:C\boxtimes D\rightarrow F(C)\boxtimes' F(D)$ commuting with the symmetry operator and taking nondegenerate cycles to nondegenerate ones.
\end{enumerate}
\end{definition}
\begin{proposition}\label{An equivariant functor induces maps of spectra}
A functor of weak algebraic bordism categories induces maps of spectra 
\[
F^\bullet:\SymL(\mathcal{K})\rightarrow\SymL(\mathcal{K}') \qquad F_\bullet:\QuadL(\mathcal{K})\rightarrow\QuadL(\mathcal{K}')
\]
and hence maps between corresponding $L$-groups.
\end{proposition}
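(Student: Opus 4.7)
The plan is to lift $F$ to the level of the $\Delta$-set $m$-simplices that make up the spectra $\SymL(\mathcal{K})$ and $\QuadL(\mathcal{K})$, and then check that the assignment is simplicial and compatible with the bonding maps. I will treat the symmetric case; the quadratic case is formally identical after replacing the homotopy-fixed-points functor $\Hom_{\ZZtwo}(W,-)$ by the homotopy-orbits functor $W\otimes_{\ZZtwo}(-)$, and replacing $\phi_0$ by $(1+\tau)\phi_0$ in the nondegeneracy criterion.

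First I would define the map on a single $n$-dimensional SAPC $(C,\phi)$ in $\mathcal{D}$ by
\[
F^\bullet(C,\phi)\;:=\;(F(C),\,h^{h\mathbb{Z}_2}(\phi)),
\]
where $h^{h\mathbb{Z}_2}$ is the map $(C\boxtimes C)^{h\mathbb{Z}_2}\to (F(C)\boxtimes' F(C))^{h\mathbb{Z}_2}$ obtained by applying $\Hom_{\ZZtwo}(W,-)$ to $h_{C,C}$. The assumption that $h$ commutes with the symmetry operator $\tau$ makes this map of $\ZZtwo$-complexes well defined. Condition (1) of a functor of weak algebraic bordism categories gives $F(C)\in\mathcal{D}'$, and the nondegeneracy of $h^{h\mathbb{Z}_2}(\phi)$ reduces to the nondegeneracy of $h_{C,C}(\phi_0)$, which is exactly condition (2).

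Next I would extend the construction to $m$-ads, which are diagrams of chain complexes in $\mathcal{D}$ indexed over the faces of $\Delta^m$ together with compatible relative structures. Applying $F$ pointwise and $h^{h\mathbb{Z}_2}$ to all structure cycles yields an $m$-ad in $\mathcal{D}'$; this uses that $F$ preserves cofibrations, mapping cones and $\mathcal{C}$-equivalences, so the homotopy-pushout data constituting an $m$-ad is carried across correctly. Simplicial faces and degeneracies correspond to restriction and degeneration along inclusions of faces of $\Delta^m$, and these commute with $F$ and with $h$ by functoriality/naturality, giving a morphism of $\Delta$-sets at each spectrum level. The bonding maps of $\SymL(\mathcal{K})$ are given by the standard ``pairing with an interval'' construction, which is natural in the underlying weak algebraic bordism category; exactness of $F$ together with naturality of $h$ then makes the bonding maps strictly preserved, so the level-wise $\Delta$-set maps assemble into a map of $\Omega$-spectra.

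The subtle step, and the one I expect to be the main obstacle, is to verify that \emph{relative} nondegeneracy is preserved, i.e.\ that for an SAPP $(f\colon C\to D,\delta\phi,\phi)$ the image cycle is again a nondegenerate relative cycle in $(F(D)\boxtimes'\Cone(F(f)))$. Since $F$ commutes with mapping cones by condition (1), there is a canonical identification $\Cone(F(f))\cong F(\Cone f)$, and since $h$ is a natural transformation of bifunctors, the square
\[
\begin{CD}
D\boxtimes \Cone(f) @>>> F(D)\boxtimes' \Cone(F(f)) \\
@V{\mathrm{id}\boxtimes f}VV @VV{\mathrm{id}\boxtimes' F(f)}V \\
D\boxtimes D @>>> F(D)\boxtimes' F(D)
\end{CD}
\]
commutes. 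Condition (2) applied at the pair $(D,\Cone f)$ then transports nondegeneracy of the image of $\delta\phi$ under $(\mathrm{id}\boxtimes f)$ to nondegeneracy of its image in $H_{n+1}(F(D)\boxtimes'\Cone(F(f)))$, which is the required relative nondegeneracy for the image SAPP. Once this is in place, passage to homotopy groups gives the induced maps $L^n(\mathcal{K})\to L^n(\mathcal{K}')$ and $L_n(\mathcal{K})\to L_n(\mathcal{K}')$.
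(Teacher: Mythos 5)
Your proposal follows essentially the same route as the paper's proof (which is deliberately terse and punts most of the simplicial bookkeeping to \cite[\S 13]{Ra92}): apply $F$ and $h$ levelwise to SAPCs and SAPPs, observe that condition (2) carries nondegeneracy across, and extend to $[m]$-ads to get a $\Delta$-set map and hence a spectrum map. Your discussion of the bonding maps and of preservation of $m$-ad gluing data is a useful filling-in of what the paper only gestures at.

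One glitch in the relative-nondegeneracy step: the commutative square you draw labels the vertical arrows $\mathrm{id}\boxtimes f$, but $f\colon C\to D$ does not induce a map $\Cone(f)\to D$, so the square as written is not meaningful. The map actually in play is the canonical inclusion $\iota\colon D\to\Cone(f)$, giving $D\boxtimes D\to D\boxtimes\Cone(f)$, and by definition of a SAPP the datum to check is that the image of $\delta\phi_0$ under this map is a nondegenerate cycle. The correct argument is: naturality of $h$ gives a commutative square with vertical maps $\mathrm{id}\boxtimes\iota$ and $\mathrm{id}\boxtimes'\iota'$ (where $\iota'\colon F(D)\to\Cone(F(f))$ is the corresponding inclusion) and horizontal maps $h_{D,D}$ and $h_{D,\Cone f}$; the identification $F(\Cone f)\cong\Cone(F(f))$ holds automatically because $F$ is an additive functor applied degreewise, so it commutes with the mapping-cone construction; and then condition (2) applied to $h_{D,\Cone f}$ transports the nondegeneracy of the image of $\delta\phi_0$. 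With that correction the proof is complete and matches the intended argument.
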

\begin{proof}
Thinking on the level of the $L$-groups the statement looks obvious, since the natural transformation $h$ implies that $F$ maps Poincar\'e objects to Poincar\'e objects and bordant objects to bordant ones: A SAPC $(C,\phi)$ in $\mathcal{D}$ gives rise to a SAPC $(F(C),h(\phi))$ in $\cattwo{D}'$. If two SAPC's $(C,\phi),\;(C',\phi')$ are bordant via 
\[
(f:C\oplus C',\delta\phi,\phi\oplus-\phi')
\] 
then their images $(C,\phi),\;(C',\phi')$ are bordant via 
\[
(F(f):F(C)\oplus F(C'),h(\delta\phi),h(\phi\oplus-\phi')).
\]
Now to lift this to a spectrum map observe that the assignment $(C,\phi)\mapsto (\mathcal{F},h(\phi))$ respects the gluing constructions of
$[m]$-ads and generalises to a map of $[m]$-ads of SAPC in $\cattwo{D}$ to $[m]$-ads of SAPC in $\cattwo{D}'$, which is well defined because of the above. Hence it gives rise to an induced
map of Kan $\triangle$-sets and corresponding $\Omega$-spectra. The quadratic case is analogous. See also \cite[\S 13]{Ra92} for this sort of reasoning.

\end{proof}
\subsection{\bf{Algebraic Bordism Categories}\label{Algebraic Bordism Categories Basics}}\label{subsec:2.2}
First an elementary observation. Let $\cat{A}$ be an additive category and $\cat{B(A)}$ the category of (bounded) chain complexes in $\cat{A}$. A contravariant additive functor 
\[
T:\cat{A}\rightarrow \cat{B(A)}
\]
can be extended to a contravariant additive functor
\[
T:\cat{B(A)}\rightarrow \cat{B(A)}
\]
simply by taking the total complex of the double complex arising by applying $T$ degreewise. Now the following definition makes sense.
\begin{definition}
 Let $\cat{A}$ be an additive category and $\cat{B(A)}$ the category of chain complexes in $\cat{A}$. Given a contravariant additive functor
$
T:\cat{A}\rightarrow \cat{B(A)}
$
and a natural transformation $e:T^2\rightarrow \id_\cat{A}$ the triple $(\cat{A},T,e)$ is called an \textit{additive category with chain duality} if and only if
\begin{enumerate}
 \item $e(T(A))\circ T(e(A))=\id_{T(A)}$,
 \item $e(A):T^2(A)\rightarrow A$ is a chain equivalence.
\end{enumerate}
\end{definition}
\begin{definition}
Given an additive category with chain duality $(\cat{A}, T, e)$ one defines a product of two objects $M,N\in\cat{A}$ by
\[
M\otimes_\cat{A} N=\Hom_{\cat{A}}(TM,N)
\]
which can be extended to a product of two chain complexes $C,D\in\cat{B(A)}$
\[
C\otimes_\cat{A} N=\Hom_{\cat{A}}(TC,D).
\]
The duality functor $T$ induces a $\mathbb{Z}_2$-action on $C\otimes_\cat{A} C$. The definitions of symmetric and quadratic 
Poincar\'e complexes and pairs in $\cat{B(A)}$ carry over verbatim from above. The symmetric (resp. quadratic) $L$-
groups $L^n(\cat{A})$ (resp. $L_n(\cat{A})$) as bordism groups of SAPC's (resp. QAPC's) in $\cat{B(A)}$.
\end{definition}

Now we can slightly generalise this notion by restricting the choice of chain complexes or allowing $T^2(A)\rightarrow A$ to be a ``weaker'' equivalence.
\begin{definition}
 Let $\cat{A}$ be an additive cateogory. Given a full subcategory $\cat{C}$ of the category of (bounded) chain complexes 
 $\cat{B(A)}$ which is closed under mapping cones, a chain complex $C\in\cat{B(A)}$ is called $\cat{C}$-\textit{contractible} if $C$ is in $\cat{C}$. A chain map $f:C\rightarrow D$ is called a $\cat{C}$-{equivalence} if the mapping cone $\Cone(f)$ is in $\cat{C}$. Assume now $(\cat{A}, T, e)$ is an additive category with chain duality and two subcategories $\cat{B},\cat{C}$ of $\cat{B(A)}$ are specified which are closed under mapping cones and $\cat{C}$ is contained in $\cat{B}$. A triple $\Lambda=(\cat{A},\cat{B},\cat{C})$ is called an \textit{algebraic bordism category} if and only if for each $C\in\cat{B}$
\begin{enumerate}
 \item the mapping cone $\Cone(\id_C)$ is in $\cat{C}$,
 \item the chain equivalence $T^2(C)\rightarrow C$ is a $\cat{C}$-equivalence.
\end{enumerate}
\end{definition}
\begin{definition}
 Given an algebraic bordism category $\Lambda=(\cat{A},\cat{B},\cat{C})$ we can follow the above recipe to define symmetric and quadratic algebraic Poincar\'e complexes respectively pairs in $\cat{B}$. A cycle $\phi\in C\otimes_\cat{A} C$ is considered nondegenerate here if and only if the mapping cone of its adjoint is a $\cat{C}$-equivalence. The symmetric (resp. quadratic) $L$-groups $L^n(\Lambda)$ (resp. $L_n(\Lambda)$) of $\Lambda=(\cat{A},\cat{B},\cat{C})$ are then defined as bordism groups of SAPC's (resp. QAPC's) in $\cat{B}$.
\end{definition}
\begin{example}\label{Algebraic Bordism Category of a ring with involution}

Let $R$ be a ring with involution $\iota$. Let $\cat{A}(R)$ be the category of f.g. projective left $R$-modules. Define $T:\cat{A}(R) \rightarrow \cat{A}(R)\subset\cat{B}(\cat{A}(R))$ by mapping a module $M$ to $M^{\ast t}=Hom^t_R(-,R)$ where the superscript $t$ indicates that the right $R$-module $M^\ast$ is viewed as a left module via the involution. Let $\cat{B}(R)$ consist of finite chain complexes of f.g. projective left $R$-modules and $\cat{C}(R)$ of contractible ones. This defines an algebraic bordism category $\Lambda=(\cat{A}(R),\cat{B}(R),\cat{C}(R))$ and the symmetric (resp. quadratic) $L$-groups $L^n(\Lambda)$ (resp. $L_n(\Lambda$) are the symmetric (resp. quadratic) $L$-groups of the ring with involution $R$. For the group ring $R=\mathbb{Z}[\pi]$ and the canonical involution the quadratic groups $L_n(\mathbb{Z}[\pi])$ are the (projective) surgery obstruction groups of Wall. The symmetric groups are the nonperiodic versions of symmetric $L$-groups of Mishchenko.
 \end{example}
Here is the main example of a weak algebraic bordism category.
\begin{example}
\label{Algebraic Bordism Category gives rise to a weak algebraic bordism category}
Given an algebraic bordism category
$\Lambda=(\mathbb{A},\mathbb{B},\mathbb{C})$ such that $\mathbb{C}$ contains chain contractible complexes in $\mathbb{B}$ the category
$\mathcal{K}_\Lambda=(\mathbb{A},\mathbb{B},\mathbb{C},\boxtimes_T)$ is a weak algebraic bordism category where
$C\boxtimes_{T}D:=C\otimes D$
\end{example}
\begin{remark}
There is a notion of a functor of algebraic bordism categories. Such a functor induces maps of $L$-spectra and $L$-groups. We will not make use of this notion here. It is however important to notice that such a functor gives rise to a functor of the corresponding weak algebraic bordism categories. This will be used later.
\end{remark}

\section{The algebraic bordism category $\cat{A}(\mathbb{Z},X)$}\label{sec:3}
Let $X$ be a simplicial complex. In this section we recall the
definition of the $\cat{A}(\mathbb{Z},X)$. The reference is
\cite[\S 4 et seq.]{Ra92}.
\begin{definition}
\hfill
\begin{enumerate}
\item
Let $\mathbb{A}$ be an additive category. An object $M\in \cat{A}$
is $X$-based if it is expressed as a direct sum
\[
M=\sum_{\sigma\in X}M(\sigma)
\]
of objects $M(\sigma)\in \cat{A}$ s.t. $\{\sigma\in X \mid
M(\sigma)\neq0\}$ is finite. A morphism $f:M \rightarrow N$ of $X$-based
objects is a collection of morphisms in $\cat{A}$
\[
\{f(\tau,\sigma):M(\sigma)\rightarrow N(\tau)\mid\sigma,\tau\in X\}.
\]
\item
Let $\cat{A}_*(X)$ be the additive category of $X$-based objects $M$
 with morphisms $f:M\rightarrow N$ s.t.
 $f(\tau,\sigma):M(\sigma)\rightarrow N(\tau)$ is $0$ unless
 $\tau\geq\sigma$ so that
 \[
f(M(\sigma))\subseteq\sum_{\tau\geq\sigma} N(\tau).
 \]
\item
Forgetting the $X$-based structure defines the covariant
\textit{assembly} functor
\[
\cat{A}_*(X)\rightarrow\cat{A}, \quad M\mapsto
M^*(X)=\sum_{\sigma\in X}M(\sigma).
\]
\end{enumerate}
\end{definition}
\begin{definition}
A $(\mathbb{Z},X)$-module is an $X$-based object in
$\mathbb{A}(\mathbb{Z})$, where $\mathbb{A}(\mathbb{Z})$
denotes the additive category of free abelian groups. 
\end{definition}
\begin{remark}\label{Generation by free (Z,X)-modules}
 A \textit{free} $(\mathbb{Z},X)$-module on one generator $M_\sigma$ is given by
\[
M_\sigma(\tau)=\left\{
\begin{array}{ll}
 \mathbb{Z}&\sigma=\tau\\
0 & \sigma\neq\tau\\
\end{array}
\right.
\]
for some simplex $\sigma\in X$. Every $(\mathbb{Z},X)$-module is a direct sum of free $(\mathbb{Z},X)$-modules on one generator. 
\end{remark}

Here and in the following $\triangle_\ast(X)$ stands for the simplicial chain complex of a simplicial complex $X$.
\begin{example}
The simplicial cochain complex $\triangle(X)^{-*}$ of $X$ is a
finite chain complex in $\cat{A}(\mathbb{Z})_*(X)$ with
\[
\triangle(X)^{-*}(\sigma)= S^{-\md{\sigma}}\mathbb{Z}.
\]
\end{example}
\begin{definition}
\hfill
\begin{enumerate}
\item Let $\cat{A}_*[X]$ be the category with objects the
contravariant additive functors
\[
M: X \rightarrow \cat{A}, \quad \sigma \mapsto M[\sigma]
\]
s.t. $\{\sigma\mid M[\sigma]\neq 0\}$ is finite. The morphisms are
natural transformations of such functors. Here we view $X$ as a
category consisting of simplices and face inclusions.
\item 
We have a covariant functor
\[
\cat{A}_\ast(X)\rightarrow\cat{A}_\ast[X],\quad M\mapsto [M],
[M][\sigma]=\sum_{\tau\geq\sigma}M(\tau)
\]
\end{enumerate}
\end{definition}
\begin{remark}\label{Generation by representables}
 The assembly functor embeds $\cat{A}_\ast(X)$ into $\cat{A}_\ast[X]$ as a full subcategory. Furthermore, every object in $\cat{A}_\ast[X]$ can be obtained by taking (valuewise) direct sum of functors of the form $M_{[\sigma]}$ where the latter is the free abelian group generated by $\Hom_{X}(-,\sigma)$. We can use Remark \ref{Generation by free (Z,X)-modules} to identify $M_{[\sigma]}=[M_\sigma]$.
\end{remark}

\begin{remark}
 To simplify notation we will sometimes write $M$ for $[M]$ in the hope that no confusion is caused. This is in particular reasonable when the type of brackets around the argument determines whether $M$ is in $\mathbb{A}_\ast(X)$ or $\mathbb{A}_\ast[X]$:  $M(\sigma)$ and $M[\sigma]=[M][\sigma]=\sum_{\tau\geq\sigma}M(\sigma)$.
\end{remark}

\begin{example}\label{guiding example of (Z,X)-modules}
Given a simplicial complex $Y$ denote by $D(\sigma,Y)$ the dual cell of $\sigma$ and by $\partial D(\sigma,Y)$ its boundary i.e. the union of dual cells of simplices having $\sigma$ as a proper face. A simplicial map $f:Y\rightarrow X$ gives rise to a complex of $(\mathbb{Z},X)$-modules $C_f$ defined as
\[
C_f(\sigma)=\triangle_\ast(f^{-1}D(\sigma,Y),f^{-1}\partial D(\sigma)).
\]
We have 
\[
[C_f][\sigma]=\sum_{\tau\geq\sigma}C_f(\tau)=\triangle_\ast(f^{-1}D(\sigma,Y)).
\]
Its assembly equals $\triangle_\ast(Y')$ -- the simplicial chain complex of the barycentric subdivision of $Y$.
\end{example}

\begin{proposition}{\cite[5.1]{Ra92}}\label{Lambda and X give rise to Lambda(X)}
Given an algebraic bordism category $\Lambda$ and a locally finite
simplicial complex $X$. The chain duality functor of $\Lambda$ induces a chain duality functor on $\cat{A}_\ast(X)$. Let $\cat{B}_\ast(X)$ be the category of chain complexes $B$ in
$\cat{B}(\cat{A}_\ast(X))$ such that $B(\sigma)$ is in $\cat{B}$ for
every $\sigma$ and similarly for $\cat{C}_\ast(X)$. This makes the triple
$\Lambda_\ast(X)=(\cat{A}_\ast(X),\cat{B}_\ast(X),\cat{C}_\ast(X))$ into an algebraic bordism category,
\end{proposition}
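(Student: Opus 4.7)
The plan is to construct a chain duality on $\cat{A}_\ast(X)$ from the chain duality $(T,e)$ on $\cat{A}$, and then verify that the pointwise definitions of $\cat{B}_\ast(X)$ and $\cat{C}_\ast(X)$ satisfy the algebraic bordism axioms; both steps are local in the simplex variable, which is what makes the result work.

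First I would define a contravariant additive functor $T_X : \cat{A}_\ast(X) \to \cat{B}(\cat{A}_\ast(X))$ via Ranicki's recipe, using the fact that $T$ extends to complexes and the bracket functor $M \mapsto [M]$ of Definition~3.4. The formula sets
\[
T_X(M)(\sigma) \;=\; T\bigl(M[\sigma]\bigr)[-|\sigma|],
\]
the face inclusion $\tau \geq \sigma$ inducing the natural map $M[\tau] \hookrightarrow M[\sigma]$ and dually a map of the $T$-values, which after the degree shifts gives an upper-triangular morphism in $\cat{A}_\ast(X)$ as required. On morphisms $f:M\to N$ in $\cat{A}_\ast(X)$ (which are upper-triangular in the poset order), the componentwise construction again produces an upper-triangular morphism because $f$ induces a map $N[\sigma] \to M[\sigma]$ of the bracketed objects. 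Extending $T_X$ degreewise to $\cat{B}(\cat{A}_\ast(X))$ via total complexes, the natural transformation $e_X : T_X^2 \to \id$ is assembled componentwise from $e$; the required identity $e_X(T_X M)\circ T_X(e_X(M)) = \id_{T_X M}$ then reduces, after unravelling the direct-sum decompositions indexed by chains of faces, to the analogous identity for $(T,e)$ in $\cat{A}$.

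Next I would verify the two algebraic bordism axioms for $\Lambda_\ast(X)=(\cat{A}_\ast(X),\cat{B}_\ast(X),\cat{C}_\ast(X))$. For $C\in \cat{B}_\ast(X)$, the cone $\Cone(\id_C)$ lies in $\cat{C}_\ast(X)$ iff $\Cone(\id_{C(\sigma)}) \in \cat{C}$ for every $\sigma$, which is immediate from the assumption that $C(\sigma)\in\cat{B}$ and that $\Lambda$ is an algebraic bordism category. For the second axiom, one checks that $e_X(C) : T_X^2 C \to C$ is a $\cat{C}_\ast(X)$-equivalence by showing its cone at each $\sigma$ admits a finite filtration (indexed by chains $\sigma\leq \tau_0 < \tau_1 <\cdots$ arising from iterating the bracket construction) whose associated graded pieces are, up to suspension, the cones of the $\cat{C}$-equivalences $T^2 C(\tau) \to C(\tau)$. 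Since $\cat{C}$ is closed under mapping cones and contains contractible complexes, the filtration quotients and hence the total cone all lie in $\cat{C}$.

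The main obstacle is the precise bookkeeping in the definition of $T_X$: the grading shifts by $|\sigma|$ and the direction reversal in face maps must be chosen so that $T_X$ genuinely lands in $\cat{A}_\ast(X)$ (not merely in chain complexes in $\cat{A}$), and so that the involution-like nature of $e_X$ is compatible with the upper-triangular support condition. Once this setup is correct, the axioms follow by a pointwise/filtration argument reducing everything to the corresponding assertions in $\Lambda$, and the compatibility of the construction with subcomplexes $\cat{B}_\ast(X)\supset\cat{C}_\ast(X)$ is automatic since both are defined pointwise.
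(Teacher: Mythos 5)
The paper states this result as a citation of \cite[5.1]{Ra92} and gives no proof of its own, so the comparison here is with Ranicki's original argument, which your proposal essentially reconstructs. Your formula $T_X(M)(\sigma)=T(M[\sigma])[-|\sigma|]$ is the correct one, the consistency with the upper-triangular support condition is checked in the right way, and the idea of reducing the two algebraic bordism axioms to the corresponding axioms for $\Lambda$ by a pointwise filtration is exactly the right strategy. Two points deserve attention. First, a small slip: a morphism $f:M\to N$ in $\cat{A}_\ast(X)$ induces $M[\sigma]\to N[\sigma]$, not $N[\sigma]\to M[\sigma]$; the reversal of direction you need for contravariance of $T_X$ comes from then applying $T$, not from the bracket construction itself.

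Second, and more substantively, your description of the associated graded of the filtration on $\Cone(e_X(C))(\sigma)$ is not quite accurate. Unwinding the definitions one finds
\[
T_X^2C(\sigma)\;\cong\;\bigoplus_{\rho\geq\sigma}\ \Bigl(\,\bigoplus_{\sigma\leq\tau\leq\rho} T^2\bigl(C(\rho)\bigr)[\,|\tau|-|\sigma|\,]\Bigr),
\]
so the natural filtration (say by $|\rho|-|\sigma|$) has one graded piece for each $\rho\geq\sigma$. Only the bottom piece $\rho=\sigma$ contributes the cone of $e\bigl(C(\sigma)\bigr):T^2C(\sigma)\to C(\sigma)$, which lies in $\cat{C}$ by axiom (2) for $\Lambda$. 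The remaining pieces, for $\rho>\sigma$, are of the form $T^2\bigl(C(\rho)\bigr)$ tensored with the augmented simplicial cochain complex of the simplex on the vertex set $\rho\setminus\sigma$; these are not cones of $e\bigl(C(\tau)\bigr)$ up to suspension, as you assert. What saves the argument is that this augmented cochain complex is an iterated mapping cone of identity maps, so each such graded piece lies in $\cat{C}$ by axiom (1) for $\Lambda$ together with closure of $\cat{C}$ under mapping cones. With that correction, the filtration argument closes up and the proof is complete; the overall route is the same as Ranicki's.
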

\begin{proposition}\cite[5.6]{Ra92}
 A simplicial map $f:X\rightarrow Y$ of finite (ordered) simplicial complexes induces a functor of algebraic bordism categories
\[
f_\ast: \Lambda_\ast(X)\rightarrow \Lambda_\ast(Y)
\]
determined by the assignment $f_\ast M(\sigma)=\sum\limits_{\substack{\tau\in X \\ f\tau=\sigma}}M(\tau)$
\end{proposition}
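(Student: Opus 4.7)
The plan is to verify in turn the structural ingredients required of a functor of algebraic bordism categories in Ranicki's sense (\cite{Ra92}): additivity of the underlying additive functor, preservation of the chain-complex subcategories $\cat{B}_\ast$ and $\cat{C}_\ast$, and compatibility with the chain duality.

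First I would extend $f_\ast$ to morphisms. A morphism $g:M\to N$ in $\cat{A}(\mathbb{Z})_\ast(X)$ is a matrix of components $g(\tau,\sigma):M(\sigma)\to N(\tau)$ vanishing unless $\tau\geq\sigma$. Define the components of $f_\ast g$ by grouping: let
\[
(f_\ast g)(\rho',\rho):(f_\ast M)(\rho)\longrightarrow (f_\ast N)(\rho')
\]
be the block matrix with $(\tau',\tau)$-entry equal to $g(\tau',\tau)$ for $\tau,\tau'\in X$ with $f\tau=\rho$ and $f\tau'=\rho'$. The required upper triangularity on the $Y$-side is automatic, because a simplicial map preserves the face relation, so $\tau'\geq\tau$ forces $f\tau'\geq f\tau$. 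Functoriality and additivity are then immediate, and the assignment extends degreewise to chain complexes.

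Preservation of $\cat{B}_\ast$ and $\cat{C}_\ast$ is quick. Finiteness of $X$ implies finiteness of every fibre $f^{-1}(\rho)$, so $(f_\ast C)(\rho)$ is a finite direct sum of objects $C(\tau)$ that already lie in $\cat{B}$ (respectively $\cat{C}$). Closure of these subcategories under finite direct sums is built into their closure under mapping cones together with the existence of a zero object.

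The main obstacle is compatibility with the chain duality. I would produce a natural chain map $\alpha_M:T_Y(f_\ast M)\to f_\ast(T_X M)$, verify that it is a $\cat{C}_\ast(Y)$-equivalence commuting with the duality involution $e$ and with the induced products, and check that it sends nondegenerate cycles to nondegenerate cycles. The bookkeeping is organised around the combinatorial identity
\[
(f_\ast M)[\rho] \;=\; \bigoplus_{\sigma:\, f\sigma\geq\rho} M(\sigma) \;=\; [M]\bigl[\,\{\tau\in X : f\tau\geq\rho\}\,\bigr],
\]
which encodes the geometric fact that $f^{-1}(D(\rho,Y))$ is covered by the dual cells $D(\sigma,X)$ with $f\sigma\geq\rho$. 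Since the duality $T_X$ constructed in Proposition \ref{Lambda and X give rise to Lambda(X)} is characterised by the value of $(T_X M)[\sigma]$ in terms of $T(M[\sigma])$ together with dimension shifts indexed by $|\sigma|$, the identity above produces the comparison $\alpha_M$ at each $\rho\in Y$; checking symmetry and compatibility with $e$ reduces to a rearrangement of summations over the fibres of $f$. The nondegeneracy step then follows from the fact, already established, that $f_\ast$ maps $\cat{C}_\ast(X)$ into $\cat{C}_\ast(Y)$: an adjoint whose mapping cone is $\cat{C}_\ast(X)$-contractible is taken to one whose mapping cone is $\cat{C}_\ast(Y)$-contractible. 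Assembling these pieces delivers the required functor $f_\ast:\Lambda_\ast(X)\to\Lambda_\ast(Y)$ of algebraic bordism categories.
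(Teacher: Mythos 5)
The paper offers no proof of this statement: it is cited verbatim as Proposition~5.6 of Ranicki's book \cite{Ra92}, so there is nothing internal to compare against. Evaluating your argument on its own terms, the first two stages (extension to morphisms via fibrewise block sums, using $\tau'\geq\tau\Rightarrow f\tau'\geq f\tau$, and preservation of $\cat{B}_\ast,\cat{C}_\ast$ from finiteness of fibres) are correct. But the third stage, which is where all the substance of the proposition lies, is only announced, not carried out, and the claim that it reduces to ``a rearrangement of summations over the fibres of $f$'' overlooks a genuine obstruction.

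The chain dual $T_X M$ is not a relabelled copy of $M$: $(T_X M)(\sigma)$ carries a degree shift governed by $|\sigma|$. Consequently the two sides of your intended comparison
\[
\alpha_M:\ T_Y(f_\ast M)(\rho)\ \longrightarrow\ f_\ast(T_X M)(\rho)=\sum_{\sigma:\,f\sigma=\rho}(T_X M)(\sigma)
\]
are \emph{not} concentrated in the same degrees whenever $f$ collapses a simplex, i.e.\ whenever some $\sigma$ with $f\sigma=\rho$ has $|\sigma|>|\rho|$. The left side lives in degree $-|\rho|$; the right spreads over degrees $-|\sigma|\le -|\rho|$. So $\alpha_M$ cannot be the ``obvious'' identification coming from your combinatorial identity $(f_\ast M)[\rho]=\bigoplus_{\tau:\,f\tau\geq\rho}M(\tau)$; it must be a genuine chain map interpolating across these degrees (Ranicki does this using simplicial chains of the fibres), and one must then prove it is a $\cat{C}_\ast(Y)$-equivalence compatible with $e$. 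This is precisely the content that is missing. Relatedly, your final nondegeneracy claim is circular as written: that $f_\ast$ preserves $\cat{C}$ only shows that the $\cat{C}_\ast(X)$-equivalence $T_X C\to D$ pushes forward to a $\cat{C}_\ast(Y)$-equivalence $f_\ast T_X C\to f_\ast D$; to conclude that the adjoint $T_Y(f_\ast C)\to f_\ast D$ is a $\cat{C}_\ast(Y)$-equivalence you must already know $\alpha_C$ is one — which is exactly the unverified step.
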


\begin{remark}
If $\cat{C}_\ast(X)$ contains all contractible complexes in $\cat{B}_\ast(X)$, the above algebraic bordism category gives rise to a weak algebraic bordism category according to Ex. \ref{Algebraic Bordism Category gives rise to a weak algebraic bordism category}. 
\end{remark}
Let $R$ in Ex. \ref{Algebraic Bordism Category of a ring with involution} be $\mathbb{Z}$ with the trivial involution and consider now the corresponding algebraic bordism category of \textit{free} $\mathbb{Z}$-modules $\Lambda(\mathbb{Z})=(\cat{A}(\mathbb{Z}),\cat{B}(\mathbb{Z}),\cat{C}(\mathbb{Z}))$. 
There is an algebraic bordism category $\Lambda(\mathbb{Z},X)=(\mathbb{A}(\mathbb{Z},X), 
\mathbb{B}(\mathbb{Z},X),\mathbb{C}(\mathbb{Z},X))$ defined as $\Lambda(\mathbb{Z})_\ast(X)=(\mathbb{A}(\mathbb{Z})_\ast(X), 
\mathbb{B}(\mathbb{Z})_\ast(X),\mathbb{C}(\mathbb{Z})_\ast(X))$ in Prop. \ref{Lambda and X give rise to Lambda(X)} which due to the last remark defines the weak algebraic bordism category we will be dealing with in later sections.
\begin{definition}
For the algebraic bordism category $(\mathbb{A}(\mathbb{Z},X), 
\mathbb{B}(\mathbb{Z},X),\mathbb{C}(\mathbb{Z},X))$ of
$(\mathbb{Z},X)$-modules let 
\[
\catZXmod=(\mathbb{A}(\mathbb{Z},X),\mathbb{B}(\mathbb{Z},X),\mathcal{C}(\mathbb{Z},X),\boxtimesRA)
\]
denote the corresponding weak algebraic bordism category of
$(\mathbb{Z},X)$-modules. The chain product is given by
\[
M\boxtimesRA\! N= \Hom_{\mathbb{A}_\ast(X)}(TM,N)=([M]\otimes[N])_\ast[K]=\colim[\sigma\in X][M][\sigma]\otimes [N][\sigma]
\]
\end{definition}

\begin{theorem}{\cite[\S 13]{Ra92}}\label{X to (Z,X)-modules is homology theory}
The symmetric (resp. quadratic) $L$-groups $L^n(\cattwo{K}_{(\mathbb{Z},X)})$ \!\!(resp. $L_n(\cattwo{K}_{(\mathbb{Z},X)})$) can be
identified with the $\SymL$-homology groups $H_n(X;\SymL(\mathbb{Z}))$ (resp. $\QuadL$-homology groups $H_n(X;\QuadL(\mathbb{Z})))$.
\end{theorem}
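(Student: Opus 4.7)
The plan is to interpret the assignment $X\mapsto \SymL(\catZXmod)$ (and analogously its quadratic counterpart) as a generalized homology theory on finite simplicial complexes, and identify the coefficient spectrum with $\SymL(\mathbb{Z})$ (resp.\ $\QuadL(\mathbb{Z})$). Once Eilenberg--Steenrod-type axioms (dimension, Mayer--Vietoris, disjoint union, subdivision invariance) are established, uniqueness of homology theories forces the claimed identification.

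The coefficient computation is essentially tautological. When $X$ is a single point, every $(\mathbb{Z},X)$-module is concentrated at that point, $\mathbb{A}(\mathbb{Z})_\ast(X)$ is naturally equivalent to the category of free abelian groups, and the product $\boxtimesRA$ reduces to $\otimes_\mathbb{Z}$. Hence $\catZXmod$ becomes the weak algebraic bordism category associated to the ring $\mathbb{Z}$ via Example~\ref{Algebraic Bordism Category of a ring with involution}, giving $\pi_n\SymL(\cattwo{K}_{(\mathbb{Z},\mathrm{pt})})=L^n(\mathbb{Z})$ and $\pi_n\QuadL(\cattwo{K}_{(\mathbb{Z},\mathrm{pt})})=L_n(\mathbb{Z})$.

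For a decomposition $X=X_1\cup_{X_0}X_2$ into subcomplexes, the decomposition of objects $M=\bigoplus_{\sigma\in X} M(\sigma)$ splits $\mathbb{A}(\mathbb{Z})_\ast(X)$ according to whether $\sigma$ lies in $X_0$, $X_1\setminus X_0$ or $X_2\setminus X_0$. Because morphisms only go from $\sigma$ to faces $\tau\geq\sigma$, and the chain dual of an object supported on $\sigma$ is supported on the closed star of $\sigma$, this splitting is compatible with the chain duality and with $\boxtimesRA$. Gluing SAPCs respects the decomposition and should produce a homotopy cofiber sequence of $\Omega$-spectra
\[
\SymL(\cattwo{K}_{(\mathbb{Z},X_0)})\to \SymL(\cattwo{K}_{(\mathbb{Z},X_1)})\oplus \SymL(\cattwo{K}_{(\mathbb{Z},X_2)}) \to \SymL(\catZXmod),
\]
and analogously in the quadratic case. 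Subdivision invariance follows from the direct observation that the collapse $\Delta^n\to\mathrm{pt}$ induces an $L$-equivalence, which one verifies by contracting the explicit SAPCs over $\Delta^n$ onto their assembly at the barycenter.

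Induction on the number of simplices of $X$, gluing one simplex at a time via the Mayer--Vietoris cofiber sequence, then yields the identification of $L^\ast(\catZXmod)$ with $H_\ast(X;\SymL(\mathbb{Z}))$ compatibly with the assembly map sending a simplex $\sigma$ together with an SAPC over $\mathbb{Z}$ to the SAPC over $\sigma$ supported in its top cell; the quadratic case is identical. The main obstacle is to carry out the gluing, restriction and bordism constructions consistently on the level of the simplicial sets of $m$-ads used to construct the $\Omega$-spectra $\SymL$ and $\QuadL$, so that Mayer--Vietoris holds as a cofiber sequence of spectra rather than merely on bordism groups; this level of bookkeeping is precisely what occupies \cite[\S13]{Ra92}.
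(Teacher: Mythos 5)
This statement is cited in the paper from \cite[\S 13]{Ra92} with no internal proof given, so there is nothing in the paper itself to compare against; what you have written is instead an outline of Ranicki's own argument. As such it is broadly correct: the coefficient computation at a point is exactly right (at $X=\mathrm{pt}$ the chain duality collapses to the ordinary duality on free abelian groups, giving $L^n(\mathbb{Z})$ resp.\ $L_n(\mathbb{Z})$), the Mayer--Vietoris/gluing step for a decomposition $X=X_1\cup_{X_0}X_2$ is the right mechanism, and you correctly flag that the real work is carrying out the gluing of $m$-ads at the level of Kan $\triangle$-sets so that excision holds as a cofibre sequence of spectra --- this is indeed the bulk of \cite[\S 12--13]{Ra92}.

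Two points deserve correction. First, the sentence on ``subdivision invariance'' conflates two different things: the statement that the collapse $\triangle^n\to\mathrm{pt}$ induces an $L$-equivalence is the dimension axiom for a contractible complex (equivalently, homotopy invariance applied to a simplex), not subdivision invariance. Invariance under barycentric subdivision (that $L_\ast(\cattwo{K}_{(\mathbb{Z},X')})\cong L_\ast(\cattwo{K}_{(\mathbb{Z},X)})$) is a separate assertion, needed because $\cattwo{K}_{(\mathbb{Z},X)}$ depends on the triangulation and not merely on $|X|$; it requires its own argument (compare the subdivision lemma at the end of section \ref{sec:6} of this paper, and the corresponding discussion in \cite{Ra92}). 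Second, for $M_\sigma$ the free $(\mathbb{Z},X)$-module on one generator at $\sigma$, the chain dual $TM_\sigma$ satisfies $(TM_\sigma)(\tau)\neq 0$ only for faces $\tau\leq\sigma$, so it is supported on the closed simplex $\overline{\sigma}$, not on the closed star $\overline{\mathop{\mathrm{st}}}(\sigma)$. This distinction matters: were the dual supported on the closed star, the duality would \emph{not} be compatible with restriction to a subcomplex and the Mayer--Vietoris splitting you invoke would fail. As written the conclusion you draw (compatibility of $T$ with the decomposition) is still correct, precisely because subcomplexes are closed under passing to faces, but the stated reason is wrong.
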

\begin{definition}\label{L-fundamental class (Z,X)-modules}
Let $X$ be a simplicial complex such that its realisation is an $n$-dimensional closed manifold. Consider the $(\mathbb{Z},X)$-module chain complex $C_{id_X}$ of Ex. \ref{guiding example of (Z,X)-modules}. Over each simplex there is a refinement of the Alexander-Whitney diagonal approximation (cf. \cite[\S 6]{Ra92})
\[
[C_{id_X}][\sigma]\rightarrow ([C_{id_X}][\sigma]\otimes_\mathbb{Z}[C_{id_X}][\sigma])^{h\mathbb{Z}/2}
\]
which fit together to give a map 
\[
[C_{id_X}][X]\rightarrow ([C_{id_X}][X]\otimes_\mathbb{Z}[C_{id_X}][X])^{h\mathbb{Z}/2}.
\]
The image of the fundamental class of $X$ under this is a nondegenerate cycle $\phi_X$. The pair $(C_{id_X},\phi_X)$ is a SAPC in $\cattwo{K}_{(\mathbb{Z},X)}$ and defines a canonical class in $L^n(\cattwo{K}_{(\mathbb{Z},X)})$ which will be denoted by $[X]_{\cattwo{K}_{(\mathbb{Z},X)}}$. If the realisation of $X$ is an $n$-dimensional manifold with boundary, there is an analogously constructed canonical relative class $[X]_{\cattwo{K}_{(\mathbb{Z},X)},\cattwo{K}_{(\mathbb{Z},\partial X)}}$ in $L^n(\cattwo{K}_{(\mathbb{Z},X)},\cattwo{K}_{(\mathbb{Z},\partial X)})$. 
\end{definition}

\section{Ranicki-Weiss Cosheaves}\label{sec:4}
The objects of this section are constructed in \cite{RaWei10}. While the $(\mathbb{Z},X)$-module chain complexes can be viewed as chain complexes of $\mathbb{Z}$-modules labeled by open stars of a simplicial complex, the cosheaves of Ranicki and Weiss are labeled by open sets of a given (ENR) topological space. The main guiding example is Ex. \ref{guiding example}. Dual cells are replaced by open subsets while the simplicial chain complex is replaced by the singular one. The analogous condition for a $\mathbb{Z}$-module of being $X$-based is expressed in the next definition.
\begin{definition}\label{definition O(X)-modules}
Let $X$ be a locally compact, Hausdorff and separable space and write $\Open(X)$ for the category of open sets of $X$. Let $\mathrm{F}$ be a free abelian group with a basis $B$. We call $\mathrm{F}$ $\Open(X)$-\textit{based} if and only if there is a covariant functor $F:\Open(X)\rightarrow \mathcal{A}b$ to abelian groups such that 
\begin{enumerate}
\item $F(\emptyset)=0,\quad F(X)=\mathrm{F}$,
 \item $F(U)$ is generated by a subset $B_U$ of $B$,
 \item for $U,V \in\Open(X),\quad F(U\cap V)=F(U)\cap F(V)$.
\end{enumerate}
A morphism between two $\Open(X)$-based abelian groups is a group homomorphism $f:\mathrm{F}_0\rightarrow\mathrm{F}_1$ taking $F_0(U)$ to $F_1(U)$ for every open set $U\in\Open(X)$. Denote by $\cattwo{A}=\cattwo{A}_X$ the additive category of $\Open(X)$-based groups over $X$.
\end{definition}
\begin{example}
 For any $i\geq 0$ let $\mathrm{S}$ be the $i$-th singular chain group of $X$ $S_i(X)$ with $B$ consisting of the singular $i$-simplices in $X$. Since for $U\in\Open(X)$ the subgroup $S_i(U)$ of $\mathrm{S}$ is generated by $i$-simplices in $X$ with image in $U$, it is obvious that $\mathrm{S}$ is $\Open(X)$-based.
\end{example}

\begin{definition}
Let $\cattwo{B}(\cattwo{A})$ denote the category of chain complexes
in $\cattwo{A}$ which are bounded from below.
\end{definition}

\begin{example}\label{guiding example}
Let $f: Y\rightarrow X$ be a map from a compact ENR $Y$. Define an
object $C(f)\in\cattwo{B}(\cattwo{A})$ by
$C(f)(X)=S_\ast(f^{-1}(X))$ the singular chain complex of $X$ with
the standard basis and for $U\in\Open(X), C(f)(U)\subset C(f)(X)$
the subcomplex generated by simplices with image in $f^{-1}(U)$.
\end{example}
\begin{definition}\label{sheaf condition}
\hfill
\begin{enumerate}
\item An object $C\in\cattwo{B}(\cattwo{A})$ satisfies the
\textit{sheaf type condition} if for any
$\mathcal{W}\subset\Open(X)$ the inclusion
\[
\sum_{V\in\mathcal{W}}C(V)\rightarrow C(\bigcup_{V\in\mathcal{W}}V)
\]
is a homotopy equivalence, where the sum on the left is taken inside
$C(X)$.
\item An object $C\in\cattwo{B}(\cattwo{A})$ satisfies \textit{finiteness condition (i)} if there exists an integer $a\geq
0$ such that: for every inclusion of open sets $V_1\subset V_2$ with
$\overline V_1\subset V_2$, the induced inclusion $C(V_1)\subset
C(V_2)$ factors up to homotopy through a complex $D$ of finitely
generated free abelian groups, bounded by $a$ from above and from
below.
\item An object $C\in\cattwo{B}(\cattwo{A})$ satisfies \textit{finiteness condition (ii)}
if there exists a compact subset $K$ of $X$ such that $C(U)$ depends
only on $C(U\cap K)$. In this case, $C$ is said to be supported in
$K$.
\end{enumerate}
We write $\mathcal{C}$ for the subcategory of
$\cattwo{B}(\cattwo{A})$ consisting of objects satisfying all the
above conditions. As usual, we write $\mathcal{C}_X, \mathcal{C}_Y$
etc. to emphasize the dependance on the space.
\end{definition}

\begin{remark}
The example \ref{guiding example} satisfies all three conditions of
the last definition.
\end{remark}
\begin{definition}\label{Pushforward of cosheaves}
A map $f:X\rightarrow Y$ induces a (covariant) pushforward functor
\[
f_\ast:\mathcal{C}_X\rightarrow \mathcal{C}_Y
\] 
defined by $f_\ast C(U)=C(f^{-1}(U))$. 
\end{definition}
\begin{lemma}\cite[3.9, 3.10]{RaWei10}\label{homotopy cosheaf condition}
 Let $C$ be in $\mathcal{C}_X$ and $\mathcal{W}$ be a subset of $\Open(X)$.
\begin{enumerate}
 \item[a)] If $\mathcal{W}$ is finite and closed under unions, inclusions induce a homotopy equivalence
\[
C(\bigcap_{V\in\mathcal{W}}V) \xrightarrow{\simeq} \holim[V\in\mathcal{W}]C(V).
\] 
 \item[b)] If $\mathcal{W}$ closed under intersections, the inclusions induce a homotopy equivalence
\[
\hocolim[V\in\mathcal{W}]C(V)\xrightarrow{\simeq}C(\bigcup_{V\in\mathcal{W}}V).
\]
\end{enumerate}

\end{lemma}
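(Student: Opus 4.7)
The plan is to derive both parts from the sheaf-type condition together with the finiteness conditions of Definition \ref{sheaf condition}, taking part (b) first and then part (a) by induction on $|\mathcal{W}|$.

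For part (b), I would first identify the strict colimit $\colim_{V\in\mathcal{W}}C(V)$, computed in $\cattwo{B}(\cattwo{A}_X)$, with the subcomplex $\sum_{V\in\mathcal{W}}C(V)\subseteq C(X)$. This is exactly what closure of $\mathcal{W}$ under intersections provides: the relations $C(U)\cap C(V)=C(U\cap V)$ that would otherwise have to be enforced are already present as arrows $C(U\cap V)\to C(U),\,C(V)$ in the diagram, so the colimit reduces to the sum inside $C(X)$. The sheaf-type condition then yields a homotopy equivalence $\sum_V C(V)\to C(\bigcup V)$. It remains to compare $\hocolim$ and $\colim$: because each transition map $C(U)\to C(V)$ for $U\subseteq V$ is the inclusion of a subcomplex spanned by a subset of the preferred basis, the diagram is projectively cofibrant as a functor from the poset $\mathcal{W}$ to $\cattwo{B}(\cattwo{A}_X)$, whence $\hocolim\to\colim$ is a homotopy equivalence by standard arguments \`a la Bousfield--Kan.

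For part (a) I would induct on $n=|\mathcal{W}|$, the case $n=1$ being trivial. For $n>1$ I pick a maximal element $V_0\in\mathcal{W}$ (available by finiteness), set $\mathcal{W}'=\mathcal{W}\setminus\{V_0\}$ (still closed under unions, since $V_0$ is maximal), and $\mathcal{W}''=\{V\in\mathcal{W}':V\subseteq V_0\}$ (also closed under unions). The poset inclusion $\mathcal{W}'\hookrightarrow\mathcal{W}$ yields the standard homotopy pullback decomposition
\[
\holim_{V\in\mathcal{W}}C(V)\;\simeq\;\holim_{V\in\mathcal{W}'}C(V)\times^{h}_{\holim_{V\in\mathcal{W}''}C(V_0)}C(V_0),
\]
in which the inductive hypothesis replaces the outer holims by $C(\bigcap_{\mathcal{W}'}V)$ and by an expression built from $C(\bigcap_{\mathcal{W}''}V)$. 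The remaining step is to collapse the resulting homotopy pullback to $C(\bigcap_{\mathcal{W}}V)$, using $\bigcap_{\mathcal{W}}V=V_0\cap\bigcap_{\mathcal{W}'}V$.

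The main obstacle I anticipate is this last collapse: it amounts to a genuine Mayer--Vietoris statement for $C$ on nested pairs of open sets, and does not follow from the sheaf-type condition alone. Here I expect the finiteness condition (i) of Definition \ref{sheaf condition} to do the real work, allowing one to factor $C$ up to homotopy through a diagram of bounded, finitely generated free complexes on which pointwise comparison of strict and homotopy limits is routine, and then to pass back to the limit via condition (ii). A secondary and more prosaic pitfall is to pin down the exact pullback decomposition of $\holim$ over $\mathcal{W}$ when the overcategory $\mathcal{W}''$ lacks a terminal object, which is a standard but easily mis-stated bookkeeping point in the calculus of finite homotopy limits over posets.
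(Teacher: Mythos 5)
First, note that the paper does not prove this lemma: it is cited verbatim from \cite[3.9, 3.10]{RaWei10}, so there is no in-paper argument to compare against. Your proposal must therefore be judged on its own.

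Part (b) is broadly the right idea (identify $\colim$ with the internal sum $\sum_V C(V)$ using closure under intersections and $F(U)\cap F(V)=F(U\cap V)$, invoke the sheaf-type condition, and compare $\hocolim$ with $\colim$), although the justification that the diagram is ``projectively cofibrant'' is not correct as stated --- having all transition maps be degreewise split injections does not by itself make a diagram over an arbitrary poset projectively cofibrant; one really uses that $\mathcal W$ is a meet-semilattice so that the poset and the diagram have the structure needed for $\colim\simeq\hocolim$.

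Part (a) has genuine gaps. If $\mathcal{W}$ is finite and closed under unions, a maximal element $V_0$ is automatically the maximum: for any $V\in\mathcal{W}$ one has $V\cup V_0\in\mathcal{W}$ and $V\cup V_0\supseteq V_0$, so $V\cup V_0=V_0$, i.e.\ $V\subseteq V_0$. Consequently $\mathcal{W}''=\{V\in\mathcal{W}':V\subseteq V_0\}=\mathcal{W}'$, and your proposed homotopy pullback degenerates and cannot drive an induction. Moreover, the parenthetical claim that $\mathcal{W}'=\mathcal{W}\setminus\{V_0\}$ is still closed under unions is false when $V_0$ is the top element (e.g.\ $\mathcal{W}=\{U,V,W,U\cup V,U\cup W,V\cup W,U\cup V\cup W\}$: removing the top loses union-closure since $U\cup V$ and $W$ have union $V_0$). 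Finally, the worry that the Mayer--Vietoris step requires finiteness conditions (i) and (ii) is misplaced. The basic two-set statement $C(U\cap V)\simeq C(U)\times^h_{C(U\cup V)}C(V)$ follows directly from axiom (3) of Definition \ref{definition O(X)-modules}, namely $F(U)\cap F(V)=F(U\cap V)$, together with the sheaf-type condition: one has a short exact sequence
\[
0\to C(U\cap V)\to C(U)\oplus C(V)\to C(U)+C(V)\to 0
\]
and $C(U)+C(V)\simeq C(U\cup V)$, so $C(U\cap V)$ is the homotopy fibre. The finiteness conditions play no role in this lemma; they are used for duality and compactness, not for the (co)sheaf-type gluing.
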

\begin{remark}
Let $F$ be a contravariant functor $\Open(X)\rightarrow\mathcal{H}$ with a notion of homotopy in the target category. Following the general principle of taking the homotopy limit instead of the ordinary one, $F$ is called a homotopy sheaf in the literature if for every $W\in\Open(X)$
\[
F(W)\rightarrow \holim[I]F(V_I)
\]
is a homotopy equivalence where $I$ runs through finite intersections of open sets covering $W$. Property b) of the preceding lemma is dual to this criterion. Therefore it seems consistent to call objects satisfying Def. \ref{sheaf condition} and hence Lemma \ref{homotopy cosheaf condition} b) \textit{homotopy cosheaves} of chain complexes.
\end{remark}
\begin{definition}
Denote by $\mathcal{C}''$ the full subcategory of objects for which $C(U)$ is
contractible for all $U\in\Open(X)$. A morphism $f:C\rightarrow D$ in $\mathcal{C}$ is called
\textit{weak equivalence} if its mapping cone belongs to
$\mathcal{C}''$
\end{definition}
\begin{remark}
With the chain product defined below the tuple $(\cattwo{A}_X,\cattwo{C}_X,\cattwo{C}''_X,\boxtimes)$ defines a weak algebraic bordism category. The corresponding functor from $X$ to $L$-theory of this does not satisfy excision though. To resolve this, a full subcategory of $\cattwo{C}$ is introduced in the following.
\end{remark}
\begin{definition}\label{Definition:well generated}
Let $\mathcal{D}$ be the smallest full subcategory of $\mathcal{C}$
satisfying the following.
\begin{enumerate}
\item
All objects of $\mathcal{C}$ obtained from $f:\triangle^k\rightarrow
X$ as in example \ref{guiding example} are in $\mathcal{D}$.
\item
If two of three objects in a short exact sequence $C\rightarrow
D\rightarrow E$ are in $\mathcal{D}$ then is the third.
\item
All weakly contractible objects are in $\mathcal{D}$, i.e.
$\mathcal{C}''\subset\mathcal{D}$.
\end{enumerate}
\end{definition}
\begin{remark}
For a map $f:X\rightarrow Y$ and $C\in\cattwo{D}_X$ we have $f_\ast C\in\cattwo{D}_Y$.
\end{remark}
\begin{definition}
Given two objects $C,D\in\mathcal{C}$.  Define their chain product by
\[
C\boxtimes D=\holimtwo{U\subset X \textrm{open}, K_1,K_2\subset X
\textrm{closed}}{K_1\cap K_2\subset U} C(U,U\setminus
K_1)\otimes_{\mathbb{Z}}D(U,U\setminus K_2)
\]
where the values of $C$ resp. $D$ on pairs are defined in the usual
way as quotients.
\end{definition}
\begin{remark}
A map $f:X\rightarrow Y$ induces a map of products
\[
C\boxtimes D\xrightarrow{f^{\boxtimes}}f_\ast C\boxtimes f_\ast D
\]
given by projections (i.e. specialisation to open sets in the preimage of $f$).
\end{remark}
\begin{proposition}\cite[7.3]{RaWei10}
 Mapping $X$ to $\cattwo{D}_X$ is functorial and preserves duality, i.e. if $\phi\in C\boxtimes D$ is nondegenerate its image $f^\boxtimes(\phi)$ is also nondegenerate.
\end{proposition}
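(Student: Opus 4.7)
The proof splits into (i) functoriality of $X\mapsto\cattwo{D}_X$ under $f\mapsto f_\ast$, together with functoriality of $f^\boxtimes$, and (ii) preservation of nondegenerate cycles by $f^\boxtimes$.

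For (i), the identity $(g\circ f)^{-1}(U)=f^{-1}(g^{-1}(U))$ on open sets makes pushforward strictly functorial: $(g\circ f)_\ast=g_\ast\circ f_\ast$ and $\id_\ast=\id$. The substantive check is that $f_\ast$ restricts to $\cattwo{D}_X\to\cattwo{D}_Y$, which was recorded in the remark after Def. \ref{Definition:well generated}; I would prove it by induction on the generators of Def. \ref{Definition:well generated}: a basic generator attached to $\sigma:\triangle^k\to X$ pushes forward to the generator attached to $f\circ\sigma$, while short exact sequences and membership in $\cattwo{C}''$ are preserved because $f_\ast$ acts objectwise. Compatibility $(g\circ f)^\boxtimes=g^\boxtimes\circ f^\boxtimes$ is visible from the holim description of $\boxtimes$: the reindexing functor $(V,L_1,L_2)\mapsto(f^{-1}V,f^{-1}L_1,f^{-1}L_2)$ preserves the condition $K_1\cap K_2\subset U$ and composes under composition of spatial maps, yielding the required strict equality between the two induced maps of holims.

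For (ii), a nondegenerate $n$-cycle $\phi\in C\boxtimes D$ is one whose adjoint $\Sigma^n T_X C\to D$ in $\mathcal{H}\cattwo{D}_X$ is a homotopy equivalence, where $T_X C$ corepresents $E\mapsto H_0(C\boxtimes E)$; similarly $f^\boxtimes(\phi)$ is nondegenerate iff its adjoint $\Sigma^n T_Y(f_\ast C)\to f_\ast D$ is a $\cattwo{D}_Y$-equivalence. My strategy is to construct a natural equivalence $f_\ast(T_X C)\xrightarrow{\simeq} T_Y(f_\ast C)$ under which $f_\ast$ applied to the adjoint of $\phi$ agrees up to homotopy with the adjoint of $f^\boxtimes(\phi)$. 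Since $f_\ast$ is exact, it sends $\cattwo{D}_X$-equivalences to $\cattwo{D}_Y$-equivalences, and the conclusion will follow.

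The main obstacle is establishing this identification of corepresenting objects. It should reduce to exhibiting, for $E\in\cattwo{D}_Y$, a natural weak equivalence comparing $f_\ast C\boxtimes E$ with an appropriate product on $X$ induced by the indexing functor above. Using well-generatedness (Def. \ref{Definition:well generated}) together with the sheaf-type and finiteness conditions (Def. \ref{sheaf condition}) and Lemma \ref{homotopy cosheaf condition}, the check reduces to basic generators $C=C(\sigma)$, where the relevant holims compute to simplicial chains on preimages of open-closed pairs and the comparison becomes the tautology $(f\circ\sigma)^{-1}(V)=\sigma^{-1}(f^{-1}V)$. Once this is in place, a Yoneda-style argument in $\mathcal{H}\cattwo{D}_Y$ yields the identification $f_\ast T_X C\simeq T_Y(f_\ast C)$, the compatibility of the two adjoints becomes formal, and nondegeneracy propagates from $\phi$ to $f^\boxtimes(\phi)$ as required.
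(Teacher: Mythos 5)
This proposition is cited from \cite[7.3]{RaWei10} and the paper does not reprove it, so the comparison has to be against the Ranicki--Weiss argument. Your part (i) — strict functoriality of $f_\ast$ from $(g\circ f)^{-1}=f^{-1}g^{-1}$, the inductive check that $f_\ast$ sends $\cattwo{D}_X$ into $\cattwo{D}_Y$ by going through the three generation clauses of Def.~\ref{Definition:well generated}, and the reindexing argument for $(g\circ f)^\boxtimes=g^\boxtimes\circ f^\boxtimes$ on the holim — is correct and is essentially the standard argument.

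Part (ii) has a real gap. You propose to identify corepresenting objects, $f_\ast(T_X C)\simeq T_Y(f_\ast C)$, and then deduce preservation of nondegeneracy because $f_\ast$ is exact. But to show that $f_\ast T_X C$ corepresents $E\mapsto H_0(f_\ast C\boxtimes_Y E)$ you would need to control $[f_\ast T_X C, E]_{\mathcal{H}\cattwo{D}_Y}$, and in the Ranicki--Weiss setting the pushforward $f_\ast$ has no adjoint at hand to convert this into a mapping group over $X$; nor is $T_X$ given by any explicit formula you could track through the generators. The reduction to basic generators $C(\sigma)$ does not rescue this — corepresentability is a statement in the homotopy category and does not pass along pushouts of generators without some form of base change that is never constructed. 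In fact, the whole reason Ranicki--Weiss work with the implicit duality is that the duals $T_X C$ are hard to pin down directly, so an argument whose crux is ``identify $f_\ast T_X C$ with $T_Y f_\ast C$'' is the one strategy the framework is designed to avoid.

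The actual argument in \cite{RaWei10} sidesteps duals entirely by using the local slant-product criterion for nondegeneracy — the same criterion \cite[Prop.~5.8]{RaWei10} that this paper invokes in the proof of Prop.~\ref{Map of boxtimes-products}(b). In that formulation, $\phi\in C\boxtimes D$ is nondegenerate iff for every open $U\subset X$ the slant map $\colim_{K\subset U}C(U,U\setminus K)^{n-\ast}\to D(U)_\ast$ is an equivalence. Since $f^\boxtimes$ is specialisation to preimages, the criterion for $f^\boxtimes(\phi)$ at an open $V\subset Y$ reads $\colim_{L\subset V}C(f^{-1}V,f^{-1}V\setminus f^{-1}L)^{n-\ast}\to D(f^{-1}V)_\ast$, and one compares this with the criterion for $\phi$ at $U=f^{-1}(V)$; the only issue is cofinality of $\{f^{-1}L: L\subset V\text{ compact}\}$ among compacts of $f^{-1}V$, which is handled by the compact-support finiteness condition (ii) of Def.~\ref{sheaf condition}. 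If you want a complete proof along your lines, you should replace the corepresentability step with this local criterion.
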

\begin{remark}
 Our exposition here is significantly shorter than in the original\linebreak source \cite{RaWei10}. A large part there is devoted to decomposability of $\cattwo{D}$, which is crucial for the excision property of $X\rightarrow \SymL(\cattwo{K}_X)$. Another issue to mention is that the authors do not work with the homotopy category $\mathcal{HD}$. Instead, they introduce the subcategory $\cattwo{D}'$ of \textit{free} objects, closed under taking duals, and show that every complex in $\cattwo{D}$ can be resolved by one in $\cattwo{D}'$. For the purpose of $L$-theory, this amounts to the same as working in $\cattwo{D}$ but replacing the homotopy category $\mathcal{HD}$ by the localisation with respect to the bigger class of weak equivalences  ($\cattwo{C}''$-equivalences), i.e. defining corepresentability and nondegeneracy by means of $\cattwo{C}''^{-1}\mathcal{D}$ instead of $\mathcal{HD}$.
\end{remark}

\begin{definition}
 We write $\cattwo{K}_X$ for the weak algebraic bordism category \linebreak $(\cattwo{A}_X,\cattwo{D}_X,\cattwo{C}''_X,\boxtimes)$ and 
$\SymL(\cattwo{K}_X)$ (resp. $\QuadL(\cattwo{K}_X)$) for the corresponding $L$-theory spectra. Similar for $L$-groups.
\end{definition}

\begin{theorem}\cite[section 8]{RaWei10}\label{X to cosheaves is homology theory}
\quad\\The covariant functor $X\mapsto \SymL(\mathcal{K}_X)$
satisfies homotopy invariance and excision.
\end{theorem}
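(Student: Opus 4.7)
First, for \emph{homotopy invariance}: given a map $f\colon X\to Y$, the pushforward of Def.~\ref{Pushforward of cosheaves} yields a functor of weak algebraic bordism categories $f_\ast\colon\cattwo{K}_X\to\cattwo{K}_Y$ (cf.~\cite[7.3]{RaWei10}), and hence by Prop.~\ref{An equivariant functor induces maps of spectra} a map of spectra $\SymL(f_\ast)$. By the usual 2-out-of-3 reduction it suffices to show that the projection $p\colon X\times I\to X$ induces a weak equivalence on $\SymL$. Since $p\circ i_0=\mathrm{id}_X$ for $i_0\colon X\hookrightarrow X\times I$, one composite is already the identity; for the other, I would produce a natural transformation $\eta\colon(i_0)_\ast p_\ast\Rightarrow\mathrm{id}_{\cattwo{D}_{X\times I}}$ whose components are $\cattwo{C}''$-equivalences and which commutes with the chain product and the $\mathbb{Z}_2$-action. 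The essential input is Lemma~\ref{homotopy cosheaf condition} applied to open sets of product type: combined with the contractibility of $I$, it forces $(i_0)_\ast p_\ast C$ and $C$ to be computed by compatible homotopy (co)limits whose comparison map is a $\cattwo{C}''$-equivalence. Prop.~\ref{An equivariant functor induces maps of spectra} then delivers the homotopy $\SymL((i_0)_\ast)\circ\SymL(p_\ast)\simeq\mathrm{id}$.

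For \emph{excision}, the plan is to show that for any open cover $X=U\cup V$ the natural square of spectra
\[
\begin{CD}
\SymL(\cattwo{K}_{U\cap V}) @>>> \SymL(\cattwo{K}_U)\\
@VVV @VVV\\
\SymL(\cattwo{K}_V) @>>> \SymL(\cattwo{K}_X)
\end{CD}
\]
is homotopy cocartesian. The cornerstone is Lemma~\ref{homotopy cosheaf condition}: for every $C\in\cattwo{D}_X$ the square formed by $C(U\cap V)\to C(U),\;C(V)\to C(X)$ is homotopy cocartesian in chain complexes of abelian groups. Combining this with the decomposability of $\cattwo{D}$ (alluded to in the remark after Def.~\ref{Definition:well generated}), one decomposes any object of $\cattwo{D}_X$ into matching pieces over $U$, $V$ and $U\cap V$. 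Since the chain product $\boxtimes$ is defined by a homotopy limit local in $X$, SAPCs on $X$ assemble from SAPPs on $U$ and $V$ sharing a boundary SAPC on $U\cap V$, and bordism classes match in the expected Mayer--Vietoris pattern. Passing to the Kan $\triangle$-sets of $[n]$-ads underlying the $L$-spectra translates this into homotopy cocartesianness of the square.

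The main obstacle will be the compatibility of the nondegeneracy condition with this Mayer--Vietoris decomposition. Nondegeneracy of $\phi\in(C\boxtimes C)^{h\mathbb{Z}_2}$ is a global condition, phrased via the homotopy limit defining $\boxtimes$ across all compatible pairs of closed subsets of $X$, and it is not formal that a globally nondegenerate cycle restricts to compatible nondegenerate cycles on the pieces of an open cover, nor that such compatible pieces glue to a globally nondegenerate cycle. Overcoming this forces one to show that the bifunctor $\boxtimes$ is itself (in each variable) a homotopy cosheaf, a statement that depends sensitively on the finiteness and support axioms in Def.~\ref{sheaf condition} and on the generation properties built into Def.~\ref{Definition:well generated}. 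Once this local-to-global property of $\boxtimes$ is secured, the rest of the argument is an application of the standard Quinn--Ranicki $\Omega$-spectrum machinery, promoting the bordism-theoretic gluing on $[n]$-ads to the required homotopy-theoretic statement about $\SymL$.
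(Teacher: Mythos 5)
This theorem is not proved in the present paper; it is quoted from \cite[section 8]{RaWei10}, and the remark following Def.~\ref{Definition:well generated} explicitly notes that a large part of that source is devoted to \emph{decomposability} of $\cattwo{D}$, which is what makes excision work. So there is no in-paper proof to compare against; the benchmark is the Ranicki--Weiss argument itself.

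Your plan tracks that argument in outline, and you correctly locate the hard point of excision: nondegeneracy is a global condition, and one must show that the chain product $\boxtimes$ satisfies a local-to-global (homotopy cosheaf) property so that nondegenerate cycles restrict and glue along an open (or closed) decomposition. That is precisely what Ranicki--Weiss's decomposability machinery delivers. Your sketch identifies this gap but neither fills it nor cites a substitute, so the excision half remains a plan rather than a proof.

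The homotopy-invariance half has a concrete defect as written. With $p\colon X\times I\to X$ and $i_0\colon X\hookrightarrow X\times I$, the composite $(i_0)_\ast p_\ast$ sends $C$ to the functor $U\mapsto C(U_0\times I)$ where $U_0=\{x:(x,0)\in U\}$. Neither $U_0\times I\subset U$ nor $U\subset U_0\times I$ holds in general, so the proposed natural transformation $\eta\colon(i_0)_\ast p_\ast\Rightarrow\mathrm{id}_{\cattwo{D}_{X\times I}}$ does not exist in either direction; at best one has a zig-zag through $U\cap(U_0\times I)$, and both legs must then be shown to be $\cattwo{C}''$-equivalences compatible with $\boxtimes$ and the $\mathbb{Z}_2$-symmetry. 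Even granting that, an objectwise natural weak equivalence of functors of weak algebraic bordism categories does not automatically yield a homotopy of maps of $L$-spectra: one still has to convert it into an algebraic bordism of Poincar\'e $[m]$-ads in the manner of \cite[\S 13]{Ra92}. These are exactly the steps that \cite[\S 8]{RaWei10} supplies and your sketch leaves as promissory notes.
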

\begin{definition}\label{L-fundamental class for cosheaves}
 Let $X$ be a closed $n$-dimensional manifold. As in Ex. \ref{guiding example} its singular chain complex $S_\ast(X)$ defines the complex $C(id_X)$ in $\cattwo{D}_X$. There is a refinement of the Alexander-Whitney map (cf. \cite[Ex. 5.6, 5.9]{RaWei10}) 
\[
S_\ast(X)\rightarrow (C(id_X)\boxtimes C(id_X))^{h\mathbb{Z}/2}
\]
such that the fundamental class of $X$ is mapped to a nondegenerate cycle $\phi_X$. The pair $(C(id_X),\phi_X)$ is a SAPC in $\cattwo{K}_X$ and defines a canonical class in $L^n(\cattwo{K}_X)$ denoted by $[X]_{\cattwo{K}_X}$. Analogously, a $n$-dimensional compact manifold with boundary $(X,\partial X)$ defines a SAPP $(C(id_{\partial X})\hookrightarrow C(id_X),\phi_X,\phi_{X,\partial X})$ and thus defines a canonical relative class $[X,\partial X]_{\cattwo{K}_X}$ in $L^n(\cattwo{K}_X,\cattwo{K}_{\partial X})$.
\end{definition}

\section{The functor $\mathcal{F}$}\label{sec:5}
Let $C$ be in $\cat{B}(\mathbb{Z},X)$. For the corresponding contravariant functor $[C]\in\cat{B}[\mathbb{Z},X]$ it is natural to define a covariant functor $\tilde{C}$ on unions of open stars,
which sends $U=\bigcup\opst{\sigma}$ to 
\[
\colim[\tau,\opst{\tau}\subset U] C[\tau].
\] 
The idea of the following definition is to extend $\tilde{C}$ to a functor in $\cattwo{B}(\cattwo{A}_X)$. We will usually write $C$ for $[C]$.
\begin{definition}\label{THE functor}
We define a functor from the category $\cat{B}(\mathbb{Z},X)$ of $(\mathbb{Z},X)$-module chain complexes to the Ranicki-Weiss category $\cattwo{B}(\cattwo{A}_X)$ of chain complexes labeled by sets in $\Open(X)$ as
\[
\mathcal{F}:\cat{B}(\mathbb{Z},X)\rightarrow\cattwo{B}(\cattwo{A}_X),\quad C\mapsto \mathcal{F}(C): U\mapsto\coend{\sigma}C[\sigma]\otimes S_*(U\cap\sigma)
\]
where $\coend{\sigma}C[\sigma]\otimes S_*(U\cap\sigma)$ is the coend of the functor
\[
SC(U):K^{\textit{op}}\times
K\rightarrow\ChAb,(\sigma,\tau)\mapsto C[\sigma]\otimes
S_*(U\cap\tau)
\]
and the latter is the singular chain complex of $U\cap\tau$. 
\end{definition}
\begin{lemma}
 In the above definition the functor sending $(\sigma,\tau)$ to $C[\sigma]\otimes S_\ast(U\cap\tau)$ is Reedy cofibrant and hence its coend is a model for the homotopy coend, i.e. there is a natural weak equivalence
\[
\coend{\sigma}C[\sigma]\otimes S_*(U\cap\sigma)\simeq \hocoend{\sigma}C[\sigma]\otimes S_*(U\cap\sigma).
\]
\end{lemma}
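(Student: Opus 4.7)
The plan is to recognize the indexing category as a Reedy category and reduce to a standard comparison. The simplex category $K$ of $X$ carries the direct Reedy structure with $\deg\sigma=\dim\sigma$, so $K^{\mathrm{op}}$ is inverse and $K^{\mathrm{op}}\times K$ inherits a Reedy structure. By the standard comparison (Hirschhorn~\S 19, or via the two-sided bar construction recalled in the appendix), the coend of a Reedy cofibrant bifunctor on $K^{\mathrm{op}}\times K$ agrees up to weak equivalence with its homotopy coend. So the task reduces to proving that
\[
SC(U)\colon(\sigma,\tau)\longmapsto C[\sigma]\otimes S_*(U\cap\tau)
\]
is Reedy cofibrant, which I would do by checking each slot separately and then combining.

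For the covariant slot $\tau\mapsto S_*(U\cap\tau)$ on the direct category $K$, the latching object at $\tau$ is
\[
L_\tau=\colim_{\sigma\subsetneq\tau}S_*(U\cap\sigma)=S_*(U\cap\partial\tau),
\]
the subcomplex spanned by singular simplices whose image lies in the topological boundary of $\tau$ (the colimit is a union of subcomplexes along inclusions of bases). The latching map $L_\tau\hookrightarrow S_*(U\cap\tau)$ is then the inclusion of a basis subset of a free abelian group complex, canonically complemented by the singular simplices meeting the relative interior of $\tau$. This is degreewise a split monomorphism, hence a cofibration in $\ChAb$.

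For the contravariant slot $\sigma\mapsto C[\sigma]$ (covariant on $K^{\mathrm{op}}$), the canonical decomposition $C[\sigma]=\bigoplus_{\rho\geq\sigma}C(\rho)$ exhibits every transition map $C[\sigma]\hookrightarrow C[\tau]$ (for $\tau\subsetneq\sigma$) as the inclusion of a direct summand, with complement $\bigoplus_{\tau\leq\rho\not\geq\sigma}C(\rho)$. The latching and matching maps on $K^{\mathrm{op}}$ are thus split degreewise, so this diagram is Reedy cofibrant as a diagram on the inverse category $K^{\mathrm{op}}$.

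Since $-\otimes_{\mathbb{Z}}-$ preserves split monomorphisms of free abelian groups, a Leibniz pushout-product argument promotes the two one-variable cofibrancies to Reedy cofibrancy of the bifunctor $SC(U)$ on $K^{\mathrm{op}}\times K$, and naturality of the resulting equivalence $\coend{\sigma}C[\sigma]\otimes S_*(U\cap\sigma)\simeq\hocoend{\sigma}C[\sigma]\otimes S_*(U\cap\sigma)$ follows from the fact that both sides are defined by universal constructions on $SC(U)$. The main obstacle is the bookkeeping that packages the two single-variable Reedy conditions into a bifunctorial statement on the product Reedy category; the key property that makes this routine rather than delicate is that every abelian group in sight is free, so that all the relevant comparison maps split.
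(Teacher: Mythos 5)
The paper states this lemma without giving a proof, so there is no official argument to compare against; I can only assess your proposal against the paper's surrounding framework. Your overall strategy (all groups in sight are free, so latching maps split) is sound and reaches the correct conclusion, but two points need attention. First, the paper's appendix defines Reedy cofibrancy only for functors out of a poset and then defines the homotopy coend as $\hocolim$ of $F^\natural$ over the twisted-arrow category $X^\natural$; since $X^\natural$ is itself a poset (with direct Reedy degree $\dim\sigma-\dim\tau$ on $\sigma\leq\tau$), the route internal to the paper would be to verify that $F^\natural:X^\natural\to\ChAb$, $(\sigma\to\tau)\mapsto C[\tau]\otimes S_*(U\cap\sigma)$, has split latching maps — and there the latching object at $\sigma\leq\tau$ genuinely mixes both variables, $\colim_{\sigma'\leq\sigma,\,\tau\leq\tau',\,\neq}C[\tau']\otimes S_*(U\cap\sigma')$. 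Your alternative route, via Reedy cofibrancy of the bifunctor on $K^{\mathrm{op}}\times K$ with the $(\text{inverse})\times(\text{direct})$ product structure and the fact that the coend is then left Quillen (equivalently, that the functor tensor product $G\otimes_K H$ is derived when $H$ is projectively cofibrant), is a legitimate theorem, but it is not in the paper's appendix, and the proposal leans on it only as ``the standard comparison.'' Also note that with that product structure the $K^{\mathrm{op}}$-slot check is vacuous — on an inverse category every diagram is Reedy cofibrant because latching categories are empty — so the paragraph about $C[\sigma]\hookrightarrow C[\tau]$ being a summand inclusion, and in particular the reference to ``matching maps,'' does no work and is a bit confused (matching is for fibrancy). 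The real content sits entirely in the covariant slot.

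Second, a small but genuine error: $\colim_{\sigma\subsetneq\tau}S_*(U\cap\sigma)$ is \emph{not} $S_*(U\cap\partial\tau)$. It is the internal sum $\sum_{\sigma\subsetneq\tau}S_*(U\cap\sigma)$, i.e.\ the free summand spanned by singular simplices whose image lies in \emph{some single} proper face of $\tau$; a singular simplex can land in $\partial\tau$ while touching several facets and hence fail to lie in any one face. What saves you is exactly what you wanted: this sum is still a basis-indexed subgroup of the free group $S_*(U\cap\tau)$ (because $S_*(U\cap\sigma)\cap S_*(U\cap\sigma')=S_*(U\cap\sigma\cap\sigma')$, so the poset colimit of subgroups is the union), hence the latching map is a degreewise split injection. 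So your conclusion stands, but the intermediate identification of the latching object should be corrected, and either the ``coend is left Quillen for the product Reedy structure'' theorem should be cited explicitly, or the check should be recast directly on $X^\natural$ to stay inside the paper's appendix.
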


\begin{remark}\label{F as global coend}
The functor $\mathcal{F}$ can be expressed as a global coend in the following way. View a simplex $\sigma\in X$ as a topological space and let $K(\sigma)\in\mathcal{C}_\sigma$ denote the canonical complex $C(id_\sigma)$ as given in Ex. \ref{guiding example}. Denote by $\iota_\sigma$ the inclusion of the topological space  $\real{\sigma}$ into the realisation of $X$. We have
\[
\mathcal{F}(C)=\coend{\sigma}C[\sigma]\otimes\iota_{\sigma\ast}
K(\sigma)
\]
\end{remark}

\begin{remark}\label{coend as big sum}
Sometimes it is convenient to have the following description of $\mathcal{F}(C)$. In every degree $k$ we have
\[
\mathcal{F}(C)(U)_k=\bigoplus_{i+j=k}\bigoplus_{\sigma\in X}C(\sigma)_i\otimes S_j(U\cap\sigma).
\]
The decomposition of an $X$-based chain complex is not respected by its differentials, however we have
\[
d_k:\bigoplus_{i+j=k}\bigoplus_{\sigma\in X}C(\sigma)_i\otimes S_j(U\cap\sigma)\rightarrow \bigoplus_{\tilde{i}+\tilde{j}=k-1}\bigoplus_{\sigma\in X}C(\sigma)_{\tilde{i}}\otimes S_{\tilde{j}}(U\cap\sigma),
\]
\[
C(\sigma)_i\otimes S_j(U\cap\sigma)\rightarrow\bigoplus_{\tau\geq\sigma}C(\tau)_{i-1}\otimes S_j(U\cap\tau).
\]
\end{remark}

The next lemma shows that the functor $\mathcal{F}(C)$ is
consistent with $C$ and is indeed a (homotopy) extension of $\tilde{C}$.
\begin{lemma}\label{new is old}
If $U$ is a union of open stars $\opst{\sigma}$, then
$\mathcal{F}(C)(U)$ is naturally homotopy equivalent to $\tilde{C}(U)$. If
$(U,V)$ is a pair of unions of open stars then $\mathcal{F}(C)(U,V)$
is naturally homotopy equivalent to $\tilde{C}(U,V)$.
\end{lemma}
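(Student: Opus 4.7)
The plan is to build a natural map $\mathcal{F}(C)(U)\to\tilde{C}(U)$ out of the augmentations $\epsilon:S_*(U\cap\sigma)\to\mathbb{Z}$ and, via the contractibility of each $U\cap\sigma$, show it is a chain homotopy equivalence. The crucial geometric observation---and the piece I expect to be the main obstacle---is that for $U=\bigcup_{\tau\in S}\opst{\tau}$ a union of open stars, each intersection $U\cap\sigma$ is either empty or contractible, and is nonempty iff $\opst{\sigma}\subset U$. Indeed, $U\cap\sigma\neq\emptyset$ iff some $\tau\in S$ is a face of $\sigma$, which by upward-closure of $S(U):=\{\rho:\opst{\rho}\subset U\}$ in the face poset is equivalent to $\opst{\sigma}\subset U$. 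When nonempty, $U\cap\sigma$ is star-convex with respect to the barycenter $b_\sigma$: for $p\in\opst{\tau}\cap\sigma$ with $\tau\le\sigma$, the linear segment from $p$ to $b_\sigma$ has strictly positive $\tau$-barycentric coordinates throughout, hence stays in $\opst{\tau}\cap\sigma$.

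Using the big-sum description of Remark \ref{coend as big sum}, I would define the comparison map summand-wise by $\mathrm{id}\otimes\epsilon:C(\rho)\otimes S_*(U\cap\rho)\to C(\rho)$; the image automatically lies in $\tilde{C}(U)=\bigoplus_{\rho:\opst{\rho}\subset U}C(\rho)$ by the previous paragraph. Chain-mapness follows from $\epsilon\circ d_{S_*}=0$ and from the fact that the $(\mathbb{Z},X)$-differential sends $C(\rho)$ into $\bigoplus_{\tau\ge\rho}C(\tau)$, which upward-closure of $S(U)$ keeps inside $\tilde{C}(U)$. To show the map is a quasi-isomorphism I would filter both complexes by $F^p:=\bigoplus_{\dim\rho\ge p}(\cdots)$; this is a bounded decreasing filtration preserved by the differential since $d_C$ never decreases simplex-dimension. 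On associated gradeds the cross-simplex components of $d_C$ vanish, reducing the source to $\bigoplus_{\dim\rho=p}C(\rho)\otimes S_*(U\cap\rho)$ with the tensor differential, the target to $\bigoplus_{\dim\rho=p,\,\opst{\rho}\subset U}C(\rho)$, and the map itself to a direct sum of $\mathrm{id}\otimes\epsilon$'s, each a quasi-isomorphism by the geometric step. A bounded-filtration spectral-sequence comparison then yields the quasi-isomorphism of total complexes, which by bounded-below freeness upgrades to a chain homotopy equivalence.

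Naturality in $U$ is immediate from the functoriality of $S_*$ and $\epsilon$. For the relative statement, both $\mathcal{F}(C)(V)\hookrightarrow\mathcal{F}(C)(U)$ and $\tilde{C}(V)\hookrightarrow\tilde{C}(U)$ are degreewise split injections (manifest from the big-sum description), so the comparison descends to the quotients and the five-lemma applied to the long exact sequences of homology gives the relative equivalence $\mathcal{F}(C)(U,V)\simeq\tilde{C}(U,V)$. Once the star-convexity input is in hand, everything else is essentially bookkeeping about coends and filtered chain complexes of free abelian groups.
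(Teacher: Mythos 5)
Your proof is correct, but it takes a genuinely different route from the paper's. The paper first treats a single open star $\opst{\tau}$, identifying $\mathcal{F}(C)(\opst{\tau})\simeq C[\tau]$ by a cofinality argument in the twisted arrow category combined with the contractibility of $\sigma\cap\opst{\tau}$; it then passes to a general union of open stars by invoking Lemma~\ref{homotopy cosheaf condition}~b) to write $\mathcal{F}(C)(U)$ as a homotopy colimit of the single-star values (a step which forward-references the subsequent lemmata establishing $\mathcal{F}(C)\in\cattwo{C}_X$). You instead build one explicit, globally-defined comparison map $\mathcal{F}(C)(U)\to\tilde{C}(U)$ out of the augmentations $\epsilon\colon S_*(U\cap\sigma)\to\mathbb{Z}$, and verify it is a quasi-isomorphism by filtering by simplex dimension; your geometric input --- star-convexity of $U\cap\sigma$ with respect to $b_\sigma$, and the equivalence $U\cap\sigma\neq\emptyset\iff\opst{\sigma}\subset U$ --- plays the role of the single-star contractibility used in the paper. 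What your approach buys: a canonical comparison map rather than a zig-zag, a transparent treatment of the relative case via split injections and the five lemma, and no forward reference to the cosheaf-condition lemmata. What the paper's approach buys: it stays entirely within the homotopy-cosheaf formalism being assembled and reuses Lemma~\ref{homotopy cosheaf condition}. Two small points you should spell out: the identification $\tilde{C}(U)=\bigoplus_{\rho\colon\opst{\rho}\subset U}C(\rho)$ of the defining colimit with a direct sum (which uses that $\{\rho:\opst{\rho}\subset U\}$ is upward-closed and that the structure maps $C[\tau']\hookrightarrow C[\tau]$ are inclusions of subcomplexes of $C(X)$), and the fact that the associated-graded differential on $\bigoplus_{\dim\rho=p}C(\rho)\otimes S_*(U\cap\rho)$ still carries the diagonal component $d_C^{\rho\rho}$ of the $(\mathbb{Z},X)$-differential, so the map whose quasi-isomorphism you need is $\mathrm{id}_{(C(\rho),\,d_C^{\rho\rho})}\otimes\epsilon$, which is indeed a quasi-isomorphism because $C(\rho)$ is bounded and degreewise free.
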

\begin{proof}
Let us show the lemma for one open star first, i.e.
\[
\coend{\sigma}\SC{\sigma}{\opst{\tau}}\cong\colim[(\sigma\rightarrow\rho)\in X^\natural]C[\rho]\otimes S_\ast(\sigma\cap\opst{\tau})\simeq C[\tau].
\]
Observe that $\sigma\cap\opst{\tau}$ is non-empty if and only if $\sigma\geq\tau$ and in the latter case we have
\[
S_*(\sigma\cap\bigcup_{\rho\geq\tau}\mathring{\rho})=S_*(\bigcup_{\rho\geq\tau}\sigma\cap\mathring{\rho})\simeq
\sum_{\rho\geq\tau}S_*(\sigma\cap\mathring{\rho}).
\]
The right hand side is naturally chain homotopic to $\mathbb{Z}$ because every
summand clearly is and the sum is taken inside the singular chain complex
$S_*(\opst{\tau})$ of a contractible space $\opst{\tau}$. 
The above diagram satisfies the following property. If $\alpha\cap\opst{\tau}=\emptyset$ the value $C[\beta]\otimes S_\ast(\alpha\cap \opst{\tau})$ at any $\alpha\rightarrow\beta\in X^\natural$ is zero. Furthermore for all maps $(\tilde{\alpha}\rightarrow\tilde{\beta})\rightarrow (\alpha\rightarrow\beta)$ the value at the source $C[\tilde{\beta}]\otimes S_\ast(\tilde{\alpha}\cap \opst{\tau})$ is zero as well since $\tilde{\alpha}\cap\opst{\tau}=\emptyset$. As a consequence the terms at $\alpha\rightarrow\beta$ with $\alpha\cap\opst{\tau}=\emptyset$ can be ignored when taking the colimit, i.e. $\{\alpha\rightarrow\beta\mid\alpha\cap\opst{\tau}\neq\emptyset\}$ is cofinal. Thus we have 
\[
\coend{\sigma}\SC{\sigma}{\opst{\tau}}\cong\colimtwo{(\sigma\rightarrow\rho)\in X^\natural}{\textnormal{s.t. }\tau\leq\sigma}C[\rho]\otimes S_\ast(\sigma\cap\opst{\tau}).
\]
Since the subdiagram is (still) Reedy cofibrant the colimit is actually a homotopy colimit and we may write
\[
\coend{\sigma}\SC{\sigma}{\opst{\tau}}\simeq\hocolimtwo{(\sigma\rightarrow\rho)\in X^\natural}{\textnormal{s.t. }\tau\leq\sigma}C[\rho]\otimes \mathbb{Z}\cong\hocolim[\tau\leq\sigma\in X]C[\sigma]\simeq C[\tau]
\]
where the last step is clear by cofinality. Let $U$ be a union of open stars $U_i=\opst{\tau_i}$. Due to the subsequent lemmata $\mathcal{F}$ takes values in $\cattwo{C}_X$ and we may use Lemma \ref{homotopy cosheaf condition}b) to conclude
\[
\mathcal{F}(C)(\bigcup_{i}U_i)\simeq\hocolim[U_i]\mathcal{F}(C)(U_i)
\]
where the collection of the $U_i$ is
closed under intersections, since the intersection of open stars is
an open star itself. Now each $\mathcal{F}(C)(U_i)$ is naturally homotopy
equivalent to $\tilde{C}(U_i)$ and we may write
\[
\hocolim[U_i]\mathcal{F}(C)(U_i)\simeq\hocolim[U_i\subset U]\tilde{C}(U_i)\simeq \hocolimtwo{\tau_i}{\opst{\tau_i}\subset U}C[\tau_i]
\simeq \tilde{C}(U)
\]
where the last equivalence is due to the fact that $\colim[\tau, \opst{\tau}\subset U] C[\tau] $ computes the homotopy colimit.
\end{proof}
\begin{lemma}
Given $C\in\cat{B}(\mathbb{Z},X)$. Its image $\mathcal{F}(C)$  under $\mathcal{F}$ satisfies the sheaf type condition.
\end{lemma}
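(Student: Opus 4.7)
The plan is to reduce the sheaf-type condition for $\mathcal{F}(C)$ to the classical small-chains theorem (stating that for any collection $\mathcal{W}$ of open subsets of a space $Y$ with union $W$, the inclusion $\sum_{V\in\mathcal{W}}S_\ast(V)\hookrightarrow S_\ast(W)$ is a chain homotopy equivalence), combined with a filtration argument that handles the coupling introduced by the $(\mathbb{Z},X)$-module differential of $C$.

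First I would use the big-sum description of Remark \ref{coend as big sum} to make everything concrete. Set $W := \bigcup_{V\in\mathcal{W}}V$. For each simplex $\sigma$ the family $\{V\cap\sigma\}_{V\in\mathcal{W}}$ is a collection of opens with union $W\cap\sigma$. Distributing the sum inside $\mathcal{F}(C)(X)$ over the direct-sum decomposition of Remark \ref{coend as big sum} yields
\[
\sum_{V\in\mathcal{W}}\mathcal{F}(C)(V) \;=\; \bigoplus_\sigma C(\sigma)\otimes \Bigl(\sum_{V}S_\ast(V\cap\sigma)\Bigr), \qquad \mathcal{F}(C)(W) \;=\; \bigoplus_\sigma C(\sigma)\otimes S_\ast(W\cap\sigma),
\]
and the inclusion in question is $\iota = \bigoplus_\sigma \mathrm{id}_{C(\sigma)}\otimes j_\sigma$, where $j_\sigma:\sum_V S_\ast(V\cap\sigma)\hookrightarrow S_\ast(W\cap\sigma)$ is the classical small-chains inclusion.

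Because the $(\mathbb{Z},X)$-module structure contributes to the total differential components $d_C^{(\tau,\sigma)}:C(\sigma)\to C(\tau)$ for $\tau\geq\sigma$, one cannot simply assemble the chain homotopies $h_\sigma$ for the $j_\sigma$ into a single global homotopy, and this is the main obstacle. To disentangle this, I would filter $\mathcal{F}(C)(U)$ decreasingly by simplex dimension,
\[
F^p\mathcal{F}(C)(U)\;:=\;\bigoplus_{\dim\sigma\geq p}C(\sigma)\otimes S_\ast(U\cap\sigma),
\]
which is a subcomplex precisely because $d_C^{(\tau,\sigma)}$ only ever raises (or preserves) simplex dimension. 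The inclusion $\iota$ respects this filtration, and on the associated graded $F^p/F^{p+1}=\bigoplus_{\dim\sigma=p}C(\sigma)\otimes S_\ast(U\cap\sigma)$ the differential decouples into the ordinary tensor-product differential $d_{C(\sigma)}\otimes 1 + (-1)^{|\cdot|}1\otimes d_S$ on each $\sigma$-summand. The induced map on the associated graded is $\bigoplus_{\dim\sigma=p}\mathrm{id}_{C(\sigma)}\otimes j_\sigma$, a chain homotopy equivalence by the small-chains theorem (tensoring with $C(\sigma)$, a bounded complex of free abelian groups, preserves chain homotopy equivalences).

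Since $C\in\mathbb{B}(\mathbb{Z},X)$ is bounded and every $C_n$ has finite simplex support, the filtration is bounded; so either a finite downward induction on $p$ using the five lemma applied to the short exact sequences $0\to F^{p+1}\to F^p\to F^p/F^{p+1}\to 0$, or the convergent spectral sequence of the filtered complex, shows that $\iota$ is a quasi-isomorphism. Finally, both $\sum_V\mathcal{F}(C)(V)$ and $\mathcal{F}(C)(W)$ are bounded-below complexes of free abelian groups, so the quasi-isomorphism $\iota$ is automatically a chain homotopy equivalence, which is exactly the sheaf-type condition.
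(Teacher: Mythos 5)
Your proof is correct, but it takes a genuinely different route from the paper. The paper works at the level of the coend model: for each fixed $\sigma$ the inclusion $\sum_{V\in\mathcal{W}}C[\sigma]\otimes S_\ast(V\cap\sigma)\rightarrow C[\sigma]\otimes S_\ast(\bigcup V\cap\sigma)$ is a homotopy equivalence by classical excision, this is propagated through the coend because the coend computes the homotopy coend (Reedy cofibrancy), and the remaining point --- that the internal sum taken inside $\mathcal{F}(C)(X)$ commutes with the (homotopy) coend --- is handled by a Fubini-type interchange of homotopy colimits. You instead work with the underlying graded description of Remark \ref{coend as big sum}: you identify $\sum_{V}\mathcal{F}(C)(V)$ with $\bigoplus_\sigma C(\sigma)\otimes\bigl(\sum_V S_\ast(V\cap\sigma)\bigr)$ (legitimate, since each $S_\ast(V\cap\sigma)$ is spanned by a subset of the basis of singular simplices, so the sum distributes over the fixed $\sigma$-decomposition), filter both sides by simplex dimension --- a subcomplex filtration precisely because the $(\mathbb{Z},X)$-module differential only moves towards cofaces --- apply the small-chains theorem on each associated graded piece, and conclude by a finite five-lemma induction (or the spectral sequence of the finite filtration, using that $C$ has finite support) that the inclusion is a quasi-isomorphism, finally upgrading to a chain homotopy equivalence since both sides are bounded-below complexes of free abelian groups. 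The shared ingredient is excision for singular chains; what differs is the bookkeeping. The paper's argument is shorter and yields the equivalence naturally within the homotopy-coend formalism it uses throughout, but leans on the interchange of the internal sum with the homotopy colimit, which it only sketches; your filtration argument is more elementary and self-contained, making the coupling introduced by the differential explicit, at the cost of passing through quasi-isomorphisms and invoking projectivity to recover an honest chain homotopy equivalence.
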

\begin{proof}
We have to show that for every subset $\mathcal{W}$ of $\Open(X)$
the inclusion
\[
\sum_{V\in\mathcal{W}} \coend{\sigma}\SC{\sigma}{V}\rightarrow
\coend{\sigma}\SC{\sigma}{\bigcup_{V\in\mathcal{W}}}
\]
is a homotopy equivalence. For every fixed $\sigma$ the inclusion
\[
\sum_{V\in\mathcal{W}} \SC{\sigma}{V}\rightarrow
\SC{\sigma}{\bigcup_{V\in\mathcal{W}}}
\]
is certainly a natural homotopy equivalence due to excision of the singular
chain complex functor $S_*$ (c.f eg. \cite[III, Prop 7.3]{Do80}.
Since the coends compute homotopy coends here, the induced map
\[
\coend{\sigma}\sum_{V\in\mathcal{W}} \SC{\sigma}{V}\rightarrow
\coend{\sigma}\SC{\sigma}{\bigcup_{V\in\mathcal{W}}}
\]
is also a homotopy equivalence. We have to convince ourselves that
the (homotopy) coend and the internal sum of subcomplexes sitting
inside 
\[
\coend{\sigma}\SC{\sigma}{X}
\] 
commute. An analogue of Lemma \ref{homotopy cosheaf condition}b) shows that 
\[\hocolim[V\in
\mathcal{W}]\SC{\sigma}{V}
\] is naturally homotopy equivalent to
$\SC{\sigma}{\bigcup_{V\in\mathcal{W}}}$ and since the (homotopy)
coend is just a (homotopy) colimit, the interchange of sum and coend
follows from a Fubini-like theorem for (homotopy) colimits.
\end{proof}
\begin{lemma}
For $C\in\cat{B}(\mathbb{Z},X)$, $\mathcal{F}(C)$ satisfies the finiteness conditions
$i)$ and $ii)$ of Def. \ref{sheaf condition}.
\end{lemma}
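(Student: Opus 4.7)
The plan is to verify conditions (ii) and (i) in turn, exploiting the fact that $C$ is $X$-based and bounded, so that the coend defining $\mathcal{F}(C)(U)$ is effectively indexed by a finite sub-poset of simplices. For (ii), let $X_C$ denote the finite set of simplices $\tau$ with $C(\tau) \neq 0$ and set $K = \bigcup_{\tau \in X_C} \overline{\tau}$, a finite union of closed simplices, hence compact. From $C[\sigma] = \sum_{\tau \geq \sigma} C(\tau)$ one sees that $C[\sigma] = 0$ whenever $\sigma$ is not a face of some $\tau \in X_C$, and in the contrary case $\sigma \subset K$. Consequently all simplices contributing a non-zero summand to $\mathcal{F}(C)(U)$ lie inside $K$, and for each such $\sigma$ one has $U \cap \sigma = (U \cap K) \cap \sigma$, so $\mathcal{F}(C)(U)$ depends only on $U \cap K$.

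For (i), let $V_1, V_2 \in \Open(X)$ with $\overline{V_1} \subset V_2$. I would interpose a compact sub-polyhedron $W$ with $\overline{V_1} \cap K \subset W \subset V_2 \cap K$ (available since $\overline{V_1} \cap K$ is compact inside the open set $V_2 \cap K$ of the finite polyhedron $K$) and set $W_\sigma := W \cap \sigma$ for every $\sigma \in X$. Then $W_\sigma$ is a compact sub-polyhedron of $\sigma$ of dimension at most $\md{\sigma}$, so $S_*(W_\sigma)$ is naturally chain-homotopy equivalent to a finitely generated free complex $D_\sigma$ concentrated in degrees $[0, \md{\sigma}]$, namely the simplicial chains of a suitable triangulation. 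Because the strict inclusions $V_1 \cap \sigma \subset W_\sigma \subset V_2 \cap \sigma$ are natural in $\sigma$, passing to the coend yields a homotopy factorisation
\[
\mathcal{F}(C)(V_1) \longrightarrow \coend{\sigma} C[\sigma] \otimes D_\sigma \longrightarrow \mathcal{F}(C)(V_2).
\]
Only the finitely many $\sigma$ with $C[\sigma] \neq 0$ contribute non-zero terms, so the middle coend is a bounded complex of finitely generated free abelian groups, with dimension bounded above and below by constants depending only on $C$ and $\dim K$, and hence uniform in $(V_1, V_2)$.

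The one step that requires genuine care is arranging the chain equivalences $S_*(W_\sigma) \simeq D_\sigma$ to be natural in $\sigma$, so that they descend to the coend. I would achieve this by choosing $W$ to be a subcomplex of a sufficiently fine simplicial subdivision of $K$ compatible with all face inclusions, which forces each $W_\sigma$ to be a genuine subcomplex of that subdivision and makes the assignment $\sigma \mapsto \triangle_*(W_\sigma)$ strictly functorial on the simplex category of $X$. The remaining verifications -- the cofinality arguments allowing us to restrict to $X_C$ and the uniform dimension bound -- are straightforward bookkeeping of the same sort as in the proof of the sheaf-type condition.
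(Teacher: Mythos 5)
Your proof is correct and follows essentially the same route as the paper: for (ii) the supporting subcomplex $K$ spanned by the finitely many $\sigma$ with $C(\sigma)\neq 0$, and for (i) interposing a compact subpolyhedron between $\overline{V_1}$ and $V_2$, replacing singular by simplicial chains to get a finitely generated factorisation, and passing to the coend. You are somewhat more careful than the paper in insisting that the equivalences $S_*(W_\sigma)\simeq D_\sigma$ be natural in $\sigma$ (so that they descend to the coend), which is a genuine subtlety the paper leaves implicit, but the underlying argument is the same.
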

\begin{proof}
$\,$\\
i) Let $V_1,V_2$ be open sets in $X$ such that
$\overline{V}_1\subset V_2$. We have to show that
\[
\coend{\sigma}\SC{\sigma}{V_1}\rightarrow
\coend{\sigma}\SC{\sigma}{V_2}
\]
factors up to chain homotopy through a bounded chain complex of f.g.
free abelian groups. Since $X$ is a simplicial complex we can find a
simplicial subcomplex $Z$ such that $V_1\rightarrow V_2$ factors up
to homotopy through $Z$. 
Passing to the singular chain complex we get a factorization up to
chain homotopy

\[
\begindc{0}[40]
\obj(1,2)[1]{$S_*(\sigma\cap V_1)$} \obj(3,1)[2]{$S_*(\sigma\cap Z)$}
\obj(5,2)[3]{$S_*(\sigma\cap V_2)$}
\mor{1}{2}{}\mor{2}{3}{}[1,0]\mor{1}{3}{}[1,0]
\enddc
\]
where the bottom term is chain homotopy equivalent to the simplicial
chain complex $\triangle_{\ast}(\sigma\cap Z)$, which is a f.g. complex of
free abelian groups. Since $C[\sigma]$ is f.g. and degreewise free
we get a factorization up to chain homotopy
\[
\begindc{0}[4]
\obj(10,23)[A]{$\SC{\sigma}{V_1}$}
\obj(28,10)[B]{$C[\sigma]\otimes\triangle_{\ast}(\sigma\cap Z)$}
\obj(48,23)[C]{$\SC{\sigma}{V_2}$} \mor{A}{B}[15,14]{}
\mor{B}{C}[15,14]{} \mor{A}{C}{}
\enddc
\]
and, applying coend, the desired result.\\\\
ii) We have to show that there is a compact subspace $K$ of $X$ such
that $\mathcal{F}(\tilde{C})(U)$ is supported in $K$ i.e.
$\coend{\sigma}\SC{\sigma}{U\cap K}$ depends only on $K$. Since
$C$ is an $X$-based object there are only finitely many $\sigma$
such that $C(\sigma)\neq 0$. These simplices span a subcomplex $K$
and $\coend{\sigma}\SC{\sigma}{U\cap K}$ is supported in $K$.
\end{proof}
\begin{proposition}
 For $C\in\cat{B}(\mathbb{Z},X), \mathcal{F}(C)$ lies in $\cattwo{D}_X$. 
\end{proposition}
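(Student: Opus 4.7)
The plan is to exhibit $\mathcal{F}(C)$ as an iterated extension (in the sense of Definition \ref{Definition:well generated}(2)) of generators of $\mathcal{D}_X$ of the form $C(\iota_\sigma)$. This proceeds in two stages: a direct identification on free $(\mathbb{Z},X)$-modules on one generator, followed by a filtration argument on an arbitrary $C\in\mathbb{B}(\mathbb{Z},X)$.

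\textbf{Generator case.} For $M_\sigma$, the contravariant functor $[M_\sigma]$ takes the value $\mathbb{Z}$ on every face $\tau\leq\sigma$ (with all face maps being the identity on their non-zero domain) and $0$ on every $\tau$ that is not a face of $\sigma$. Hence
\[
\mathcal{F}(M_\sigma)(U)=\coend{\tau\leq\sigma}S_*(U\cap\tau),
\]
which by the Reedy-cofibrancy lemma following Definition \ref{THE functor} agrees up to natural weak equivalence with $\hocolim[\tau\leq\sigma]S_*(U\cap\tau)$. Since $\sigma$ is terminal in its own face poset, the inclusion of the one-object subcategory $\{\sigma\}$ is final, and this homotopy colimit is therefore naturally equivalent to $S_*(U\cap|\sigma|)=C(\iota_\sigma)(U)$, where $\iota_\sigma\colon\triangle^{\dim\sigma}\cong|\sigma|\hookrightarrow X$. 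Thus $\mathcal{F}(M_\sigma)\simeq C(\iota_\sigma)$ lies in $\mathcal{D}_X$ by Definition \ref{Definition:well generated}(1).

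\textbf{Bootstrap.} For general $C$, define the chain subcomplexes $F_kC$ of $C$ by $(F_kC)(\tau)=C(\tau)$ if $\dim\tau\geq k$ and $0$ otherwise. This is well defined because morphisms in $\mathbb{A}_*(X)$ only connect $\sigma$ to $\tau$ with $\tau\geq\sigma$, so the differentials of $C$ preserve each $F_kC$. The filtration is finite, and the successive quotients decompose as $F_kC/F_{k+1}C=\bigoplus_{\dim\sigma=k}C^{(\sigma)}$, where each $C^{(\sigma)}$ is a bounded complex of free abelian groups supported on the single simplex $\sigma$, equivalently a bounded complex of finite direct sums of copies of $M_\sigma$. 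Chain-degree truncation of $C^{(\sigma)}$ produces a tower of cofiber sequences whose graded pieces are shifts of finite direct sums of $M_\sigma$. Since $\mathcal{F}$ is additive and every short exact sequence in $\mathbb{B}(\mathbb{Z},X)$ is degreewise split (the objects being degreewise free), $\mathcal{F}$ sends these cofiber sequences to cofiber sequences in $\mathcal{B}(\mathcal{A}_X)$. The extension closure of $\mathcal{D}_X$ (which automatically implies closure under shifts, finite direct sums, and weak equivalences) together with the generator case then gives $\mathcal{F}(C^{(\sigma)})\in\mathcal{D}_X$; iterating the extension closure upward along the filtration yields $\mathcal{F}(C)=\mathcal{F}(F_0C)\in\mathcal{D}_X$.

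The main technical obstacle is the generator case, i.e.\ making the identification $\mathcal{F}(M_\sigma)\simeq C(\iota_\sigma)$ work on arbitrary open $U\subset X$ (and not only on unions of open stars, as in Lemma \ref{new is old}) and naturally enough to give an equivalence in $\mathcal{B}(\mathcal{A}_X)$. The Reedy-cofibrancy lemma and the cofinality reduction to the terminal face $\sigma$ are what one has to check carefully; once they are in place, the remainder of the argument is a formal induction along the filtration using only the abstract closure properties of $\mathcal{D}_X$ and the additivity of $\mathcal{F}$.
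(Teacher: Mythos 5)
Your proof is correct, but it takes a genuinely different route from the paper's. The paper writes $\mathcal{F}(C)=\coend{\sigma}C[\sigma]\otimes\iota_{\sigma\ast}K(\sigma)$, observes that $\iota_{\sigma\ast}K(\sigma)=C(\iota_\sigma)\in\mathcal{D}_X$ by clause (1) of Def.\ \ref{Definition:well generated}, invokes \cite[6.5]{RaWei10} to conclude that each term $C[\sigma]\otimes\iota_{\sigma\ast}K(\sigma)$ is in $\mathcal{D}_X$, and then handles the coend in one step as a cokernel of a map of (finite) direct sums, appealing to the two-of-three closure. You avoid the external citation \cite[6.5]{RaWei10} entirely by performing a dévissage on the $(\mathbb{Z},X)$-module side: the one-generator case $\mathcal{F}(M_\sigma)\cong C(\iota_\sigma)$ (which, by Remark \ref{coend as big sum}, is actually a natural \emph{isomorphism}, since $M_\sigma(\rho)$ vanishes for $\rho\neq\sigma$ and the big-sum description collapses to $S_*(U\cap\sigma)$ — you do not even need the cofinality/Reedy detour), exactness of $\mathcal{F}$ on degreewise-split short exact sequences, and then a finite filtration by simplex dimension and chain degree. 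In effect you replace the paper's direct treatment of the coend-as-cokernel with a filtration of $C$ itself, transported through $\mathcal{F}$. What the paper buys is brevity; what your route buys is self-containedness and an explicit link to generator (1) of $\mathcal{D}_X$. One point you should make explicit: since clause (2) of Def.\ \ref{Definition:well generated} operates inside $\mathcal{C}_X$, the iterated extension argument needs the intermediate objects $\mathcal{F}(F_kC)$, $\mathcal{F}(F_kC/F_{k+1}C)$, etc.\ to already lie in $\mathcal{C}_X$. This is supplied by the three lemmata preceding the proposition (which apply to \emph{any} $C'\in\cat{B}(\mathbb{Z},X)$), but it is not automatic from your argument alone and should be cited.
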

\begin{proof}
 By the preceding lemmata $\mathcal{F}(C)$ lies in $\cattwo{C}_X$. It remains to show that it is in fact contained in the full subcategory of Def. \ref{Definition:well generated}. For every $\sigma\in X$, $K(\sigma)$ is in $\cattwo{D}_X$ and hence also the pushforward $\iota_{\sigma\ast}K(\sigma)$. We can view $C[\sigma]$ as an element in $\cattwo{D}_{pt}$. It follows from \cite[6.5]{RaWei10} that $C[\sigma]\otimes \iota_{\sigma\ast}K(\sigma)$ is in $\cattwo{D}_X$. Since the coend $\coend{\sigma}C[\sigma]\otimes \iota_{\sigma\ast}K(\sigma)$ is given by a direct sum modulo the image of a direct sum it is also in $\cattwo{D}_X$.
\end{proof}
\begin{lemma}\label{F commutes with maps induced by a simplicial map}
 For a simplicial map $f:X\rightarrow X'$  and a $(\mathbb{Z},X)$-module chain complex $C$ there is a natural transformation of functors $\eta:f_\ast\mathcal{F}(C)\rightarrow \mathcal{F}(f_\ast C)$ with $\eta_U$ being a homotopy equivalence for every open set $U\subset X'$. Furthermore for $f$ injective, $\eta_U$ is an isomorphism.

\end{lemma}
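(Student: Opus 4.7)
The plan is to construct $\eta_U$ canonically from the simplicial structure of $f$ and then reduce the homotopy-equivalence claim to a computation on a single open star, using that both functors are homotopy cosheaves on $X'$. For each $\sigma \in X$ the inclusions $C(\rho) \hookrightarrow (f_\ast C)(f\rho)$ for $\rho \geq \sigma$ assemble into a natural map $C[\sigma] \to (f_\ast C)[f\sigma]$, and postcomposition by $f|_\sigma : \sigma \to f\sigma$ gives $S_\ast(\sigma \cap f^{-1}(U)) \to S_\ast(f\sigma \cap U)$. Their tensor product defines a map of bifunctors indexed by $X$; taking coends and composing with the universal pushforward from the coend over $X$ to the coend over $X'$ along $f$ produces $\eta_U$, natural in $U$ by construction.

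Next I would note that both $f_\ast\mathcal{F}(C)$ and $\mathcal{F}(f_\ast C)$ lie in $\cattwo{D}_{X'}$---the former because $\mathcal{F}(C) \in \cattwo{D}_X$ and pushforward preserves $\cattwo{D}$, the latter by the preceding proposition applied to $f_\ast C$. By Lemma \ref{homotopy cosheaf condition}(b) both are homotopy cosheaves on $X'$. Since the open stars $\opst{\sigma'} \subset U$ form a basis closed under intersection, each side is the homotopy colimit of its values on these open stars, so it suffices to prove that $\eta_{\opst{\sigma'}}$ is a homotopy equivalence for every $\sigma' \in X'$.

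The crucial calculation is on a single open star. The preimage $f^{-1}(\opst{\sigma'}) = \bigcup_{\tau \in X,\, f\tau = \sigma'} \opst{\tau}$ is itself a union of open stars in $X$, and $\opst{\tau} \subset f^{-1}(\opst{\sigma'})$ iff $f\tau \geq \sigma'$, because $f|_\tau$ maps $\mathring{\tau}$ into $\mathring{f\tau}$. Applying Lemma \ref{new is old} on both sides yields
\[
f_\ast\mathcal{F}(C)(\opst{\sigma'}) \simeq \colim[\tau:\, f\tau \geq \sigma'] C[\tau] \quad\text{and}\quad \mathcal{F}(f_\ast C)(\opst{\sigma'}) \simeq (f_\ast C)[\sigma'].
\]
The left-hand colimit is a colimit of inclusions of subgroups of $\sum_\rho C(\rho)$ and evaluates to $\sum_{\rho :\, f\rho \geq \sigma'} C(\rho)$, which is precisely $(f_\ast C)[\sigma']$.

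For the isomorphism statement, when $f$ is injective, $f|_\sigma : \sigma \to f\sigma$ is a homeomorphism and the correspondence $\rho \mapsto f\rho$ gives a bijection between $\{\rho \geq \sigma\}$ and $\{\rho' \geq f\sigma\} \cap f(X)$; since $(f_\ast C)$ vanishes off the subcomplex $f(X) \subset X'$, the map $C[\sigma] \to (f_\ast C)[f\sigma]$ is already an isomorphism, and the target coend over $X'$ collapses to the coend over $X$, so $\eta_U$ is an isomorphism of chain complexes for every $U$. The main obstacle I expect lies in the third paragraph: the equivalences supplied by Lemma \ref{new is old} must be identified as the ones actually induced by $\eta$, naturally in $\sigma'$, rather than seen as an abstract coincidence of target objects; this amounts to carefully tracking the universal maps in both coend descriptions.
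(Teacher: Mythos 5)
Your construction of $\eta_U$ agrees with the paper's: the paper simply declares the ``obvious natural map'' between coends, which is exactly the composite you describe (map of integrands followed by the pushforward of coends along $f$). Where you diverge is in how you verify that $\eta_U$ is a homotopy equivalence. The paper's proof reduces to $C$ a free $(\mathbb{Z},X)$-module on one generator $M_\sigma$ concentrated in one degree, and then uses the degreewise description of Remark \ref{coend as big sum} to identify the map in each degree with $M_\sigma(\sigma)\otimes S_{n-k}(\sigma\cap f^{-1}U)\to M_\sigma(\sigma)\otimes S_{n-k}(f\sigma\cap U)$, which it handles directly (both sides are zero or naturally equivalent, for every open $U$ at once). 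You instead try to reduce the check from arbitrary $U$ to the open stars of $X'$ via the homotopy cosheaf property and then compute on a single star with Lemma \ref{new is old}.

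There is a genuine gap in that reduction. The open stars of $X'$ do \emph{not} form a basis for the topology of $|X'|$: a general open $U$ need not contain any open star at all (take $X'=\triangle^1=[0,1]$ and $U=(1/4,1/2)$: the only open stars are $[0,1)$, $(0,1]$ and $(0,1)$, none contained in $U$). Hence the family $\{\opst{\sigma'}\subset U\}$ does not cover $U$, and Lemma \ref{homotopy cosheaf condition}(b) does not express $\eta_U$ as a map of homotopy colimits of values on open stars. To repair this within your framework you would have to pass to open stars in all finite barycentric subdivisions of $X'$ (as the paper does in the separate lemma about the slant product), but that in turn requires a subdivision functor on $\cat{B}(\mathbb{Z},X)$-modules compatible with $\mathcal{F}$ and $f_\ast$, which is a substantially larger undertaking. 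The paper's reduction to free generators avoids this entirely because its local check is valid for every open $U$ without any restriction. The second issue you flag yourself---identifying the abstract equivalences from Lemma \ref{new is old} with the map actually induced by $\eta$---is real as well, but it is the lesser of the two problems: it can be handled by carefully invoking the naturality clauses of Lemma \ref{new is old}. The computation on a single star, the description of $f^{-1}(\opst{\sigma'})$ as $\bigcup_{f\tau\geq\sigma'}\opst{\tau}$, and the injectivity argument are all correct.
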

\begin{proof}
 Let $C$ be in $\cat{B}(\mathbb{Z},X)$. We want to show that the obvious natural map 
\[
\coend{\sigma\in X}C[\sigma]\otimes (f\iota_\sigma)_\ast K(\sigma)(U)\rightarrow\coend{\sigma'\in X'}f_\ast C[\sigma']\otimes \iota_{\sigma'\ast}K(\sigma')(U)
\]
is a chain homotopy equivalence for every open $U$ in $X'$. It is sufficient to show this statement for $(\mathbb{Z},X)$-module chain complexes which are  concentrated in one degree in which they are free on one generator. Let $C$ be one of these, i.e. \linebreak
\[
C_i=\left\{
\begin{array}{ll}
 M_\sigma&i=k\\
0 & i\neq k\\
\end{array}
\right.
\]
where for a simplex $\sigma$ in $X$,  $M_{\sigma}$ is free on one generator (cf. Rm. \ref{Generation by free (Z,X)-modules}). We use Rm. \ref{coend as big sum} to rewrite the above in degree $n$ as
\[
M_\sigma(\sigma)_k\otimes S_{n-k}(\sigma\cap f^{-1}(U))\rightarrow M_\sigma(\sigma)_k\otimes S_{n-k}(f(\sigma)\cap U).
\]
Now $\sigma\cap f^{-1}(U)$ is nonempty if and only if $f(\sigma)\cap U$ is and both terms are naturally equivalent to $M_{\sigma}(\sigma)=\mathbb{Z}$. Otherwise both are zero. If $f$ is injective 
\[
S_j(\sigma\cap f^{-1}(U))\xrightarrow{f_\ast} S_j(f(\sigma)\cap U)
\] 
is an isomorphism.
\end{proof}

\section{Map between $\boxtimes$ products}\label{sec:6}
We will need the following lemma.
\begin{lemma} There is homotopy
equivalence
\[
M\boxtimesRA N\simeq\coend{\sigma}M[\sigma]\otimes
N[\sigma]\otimes\triangle_\ast(\sigma)
\]
which is natural in both components.
\end{lemma}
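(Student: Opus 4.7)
My plan is to construct a canonical comparison map from the right-hand side to $M\boxtimesRA N$ via the augmentation of each $\triangle_\ast(\sigma)$, and then establish it is a chain homotopy equivalence by a reduction to generators. Concretely, the augmentation $\epsilon_\sigma:\triangle_\ast(\sigma)\rightarrow\mathbb{Z}$ is natural in $\sigma$ and, since each simplex is contractible, is a chain homotopy equivalence of abelian chain complexes. It induces a morphism of bifunctors $M[\sigma_1]\otimes N[\sigma_1]\otimes\triangle_\ast(\sigma_2)\rightarrow M[\sigma_1]\otimes N[\sigma_1]$. Passing to coends and observing that the coend of a bifunctor constant in its covariant variable is the ordinary colimit over the contravariant one, we obtain a natural map
\[
\Phi:\coend{\sigma}M[\sigma]\otimes N[\sigma]\otimes\triangle_\ast(\sigma)\longrightarrow \colim[\sigma\in X]M[\sigma]\otimes N[\sigma] = M\boxtimesRA N.
\]
Naturality in $M$ and $N$ is immediate from the construction.

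To show $\Phi$ is a chain homotopy equivalence I would reduce to the case $M=M_\sigma$, $N=N_\tau$ with both concentrated in a single degree, where $M_\sigma$ and $N_\tau$ are free $(\mathbb{Z},X)$-modules on one generator at the simplices $\sigma$ and $\tau$ respectively (cf. Remark \ref{Generation by free (Z,X)-modules}). Both sides are additive in each variable and take short exact sequences of chain complexes to distinguished triangles: for the right-hand side this is immediate from the exactness of $\otimes$ together with the cocontinuity of coends, while for $M\boxtimesRA N=\Hom_{\mathcal{A}_\ast(X)}(TM,N)$ it follows from the fact that $T$ is a chain duality, so $\Hom(T(-),N)$ is triangulated in $M$. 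Filtering an arbitrary complex in $\cat{B}(\mathbb{Z},X)$ by its degree truncations, whose successive subquotients are direct sums of $M_\sigma$'s placed in a single degree, the equivalence propagates from the base case to arbitrary complexes by iterated applications of the five lemma.

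In the base case $[M_\sigma][\rho]\otimes[N_\tau][\rho]$ equals $\mathbb{Z}$ when $\rho$ is simultaneously a face of $\sigma$ and of $\tau$, and vanishes otherwise. If no common face exists both sides are zero. Otherwise, letting $\rho_0$ denote the simplicial intersection $\sigma\cap\tau$, the left-hand colimit collapses to $\mathbb{Z}$ because the subposet of faces of $\rho_0$ has $\rho_0$ as terminal object and the diagram is constantly $\mathbb{Z}$. The coend on the right reduces to $\coend{\rho\leq\rho_0}\triangle_\ast(\rho)$; direct inspection of the coend relations identifies this with the ordinary colimit $\bigcup_{\rho\leq\rho_0}\triangle_\ast(\rho)=\triangle_\ast(\rho_0)$, and $\Phi$ becomes the augmentation $\triangle_\ast(\rho_0)\rightarrow\mathbb{Z}$, which is a chain homotopy equivalence since $\rho_0$ is contractible. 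The principal technical point is verifying the exactness used in the d\'evissage step, in particular that $M\boxtimesRA(-)$ preserves distinguished triangles; this is essentially a consequence of the explicit form of the chain duality on $\mathcal{A}_\ast(X)$ and is the only nontrivial ingredient in the argument.
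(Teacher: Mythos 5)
Your proof is correct, but the route you take is genuinely different from the paper's. The paper dispatches the lemma in two lines: it notes that the factor $\triangle_\ast(\sigma)$ promotes the coend to a \emph{homotopy} coend --- i.e.\ the right-hand side is a standard bar-type model for $\hocolim_{\sigma\in X}M[\sigma]\otimes N[\sigma]$ --- and then observes that, since $\sigma\mapsto M[\sigma]\otimes N[\sigma]$ is Reedy cofibrant, the ordinary colimit (which by definition is $M\boxtimesRA N$) already computes this homotopy colimit. Your argument instead constructs the augmentation map $\Phi$, runs a d\'evissage in each variable down to free generators $M_\sigma,\,N_\tau$ concentrated in one degree, and verifies the base case by identifying the coend with $\triangle_\ast(\rho_0)$ for $\rho_0=\sigma\cap\tau$ and $\Phi$ with the augmentation of a contractible simplex. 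This is a fine alternative: it is more self-contained and does not need the model-category fact about Reedy cofibrancy, but it is longer and requires more bookkeeping. The paper's route buys economy and generality; yours makes the mechanism of the equivalence explicit on generators.

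One point in your argument deserves a caveat. You invoke ``exactness of $\otimes$ together with the cocontinuity of coends'' and ``$T$ is a chain duality, so $\Hom(T(-),N)$ is triangulated in $M$'' to justify the d\'evissage. Neither statement is quite right as written: coends, being quotients of direct sums, are only right exact, and the chain duality axioms by themselves do not say that $T$ is exact. What saves the argument is that the degree-truncation filtration yields \emph{degreewise split} short exact sequences, and both $\coend{\sigma}(-)\otimes\triangle_\ast(\sigma)$ and $M\boxtimesRA(-)$ (using the colimit description $\colim_\sigma M[\sigma]\otimes N[\sigma]$) carry split short exact sequences to mapping-cone sequences; that is what legitimizes the five-lemma step. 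You should state this explicitly rather than appealing to exactness in the abelian sense.
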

\begin{proof}
 With the natural homotopy equivalence 
\[
 M[\sigma]\otimes N[\sigma]\otimes \triangle_\ast(\sigma)\simeq M[\sigma]\otimes N[\sigma]
\] 
the homotopy coend becomes a homotopy colimit. Since  $\sigma\mapsto M[\sigma]$ is a Reedy cofibrant functor to chain complexes, the colimit computes the homotopy colimit:
\[
M\boxtimesRA N=\colim[\sigma\in X]M[\sigma]\otimes N[\sigma]\simeq\hocolim[\sigma\in X]M[\sigma]\otimes N[\sigma].
\] 
\end{proof}

This is the model  we will be working with. Now we can formulate a local
criterion for nondegeneracy in $\mathbb{B}(\mathbb{Z},X)$.
\begin{lemma}
Given two complexes $C,D\in\mathbb{B}(\mathbb{Z},X)$. For each
$\sigma\in X$ there is a map
\[
L:C\boxtimesRA D\rightarrow C(\sigma)\otimes
D[\sigma]\otimes\triangle_\ast(\sigma,\partial\sigma).
\]
A cycle $\phi\in C\boxtimes D$ is nondegenerate if and only if its
image in 
\[
\prod_\sigma C(\sigma)\otimes
D[\sigma]\otimes\nolinebreak\triangle_\ast(\sigma,\partial\sigma)
\]
is nondegenerate.
\end{lemma}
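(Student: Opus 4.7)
The plan is to construct the map $L_\sigma$ out of the coend model for $C\boxtimesRA D$ supplied by the preceding lemma, and then to translate the global nondegeneracy of $\phi$ into a simplexwise statement. The key background fact is that $\cat{C}(\mathbb{Z},X)$ consists precisely of those complexes $E$ for which each local component $E(\rho)$ is contractible, so a morphism in $\cat{B}(\mathbb{Z},X)$ is a $\cat{C}(\mathbb{Z},X)$-equivalence if and only if every component $E(\sigma,\sigma)\rightarrow E'(\sigma,\sigma)$ is a chain equivalence of finitely generated free abelian groups for every $\sigma\in X$.

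For the construction of $L_\sigma$, I start from $C\boxtimesRA D\simeq \coend{\tau}C[\tau]\otimes D[\tau]\otimes\triangle_\ast(\tau)$ and set the map to be zero on every $\tau$-summand with $\tau\neq\sigma$, while on the $\tau=\sigma$ summand I take the tensor product of the $X$-based projection $C[\sigma]=\bigoplus_{\rho\geq\sigma}C(\rho)\twoheadrightarrow C(\sigma)$, the identity on $D[\sigma]$, and the quotient $\triangle_\ast(\sigma)\twoheadrightarrow \triangle_\ast(\sigma,\partial\sigma)$. Well-definedness on the coend reduces to checking compatibility with the two structure maps on each proper face inclusion $\alpha<\beta$. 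If $\beta=\sigma$ and $\alpha<\sigma$, the image of $\triangle_\ast(\alpha)\hookrightarrow \triangle_\ast(\sigma)$ lies in $\triangle_\ast(\partial\sigma)$ and is killed by the quotient, so both legs vanish. If $\alpha=\sigma<\beta$, the inclusion $C[\beta]=\bigoplus_{\rho\geq\beta}C(\rho)\hookrightarrow C[\sigma]$ lands in a submodule disjoint from the $\sigma$-summand and is killed by the projection onto $C(\sigma)$. All remaining cases yield zero on both legs, so $L_\sigma$ is a well defined natural chain map out of the coend.

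To connect $L_\sigma(\phi)$ with the adjoint of $\phi$, recall that Ranicki's chain duality on $\cat{A}(\mathbb{Z},X)$ sends the free generator $M_\sigma$ to $\triangle_\ast(\sigma,\partial\sigma)$ placed at $\sigma$. Unpacking the defining identification $C\otimes_{\cat{A}_\ast(X)} D = \Hom_{\cat{A}_\ast(X)}(TC,D)$ through this formula yields a decomposition whose $\sigma$-cofactor is precisely $C(\sigma)\otimes D[\sigma]\otimes\triangle_\ast(\sigma,\partial\sigma)$, and the projection onto that cofactor coincides with $L_\sigma$. Under the Hom/tensor adjunction $L_\sigma(\phi)$ then corresponds to the $(\sigma,\sigma)$-matrix entry $TC(\sigma)\rightarrow D(\sigma)$ of the adjoint map $TC\rightarrow D$, so nondegeneracy of $L_\sigma(\phi)$ (meaning that this adjoint is a chain equivalence) matches the $\sigma$-local chain equivalence condition of the first paragraph. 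Taking the conjunction over all $\sigma$ yields both directions of the lemma.

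The main obstacle is the identification in the third paragraph: carefully matching $L_\sigma(\phi)$ with the $(\sigma,\sigma)$-entry of the adjoint via the explicit description of $T$ on the generators $M_\sigma$ and the definitional isomorphism between the coend model of $C\boxtimesRA D$ and $\Hom_{\cat{A}_\ast(X)}(TC,D)$. Once this matching is pinned down the biconditional becomes a formal assembly of simplexwise chain equivalence statements.
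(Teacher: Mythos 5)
Your construction of $L_\sigma$ is the same as the paper's: pick out the $\tau=\sigma$ summand in the coend model $\coend{\tau}C[\tau]\otimes D[\tau]\otimes\triangle_\ast(\tau)$, apply the $X$-based projection $C[\sigma]\twoheadrightarrow C(\sigma)$ and the quotient $\triangle_\ast(\sigma)\twoheadrightarrow\triangle_\ast(\sigma,\partial\sigma)$, send everything else to zero, and verify compatibility on the two kinds of structure maps (one dies in $\triangle_\ast(\partial\sigma)$, the other misses the $C(\sigma)$ summand). That part is fine and is exactly what the paper does.

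For the second half, however, the paper simply cites \cite[Prop.\ 2.7]{RaWei90} and does not attempt a direct argument, whereas you do — and your direct argument has a genuine gap. You assert that $L_\sigma(\phi)$ ``corresponds to the $(\sigma,\sigma)$-matrix entry $TC(\sigma)\to D(\sigma)$ of the adjoint.'' It does not. The target of $L_\sigma$ is $C(\sigma)\otimes D[\sigma]\otimes\triangle_\ast(\sigma,\partial\sigma)$, so nondegeneracy of $L_\sigma(\phi)$ means the adjoint $\Sigma^{n-|\sigma|}C(\sigma)^{-\ast}\to D[\sigma]$ (diagonal on $C$, full block on $D$) is a chain equivalence. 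By contrast, the $(\sigma,\sigma)$-entry of the adjoint $\Sigma^nTC\to D$ is a map $(\Sigma^n TC)(\sigma)\simeq\Sigma^{n-|\sigma|}C[\sigma]^{-\ast}\to D(\sigma)$ (full block on $C$, diagonal on $D$). These two families of conditions are indeed equivalent, but that is precisely the nontrivial content of the local duality criterion: it requires a downward induction over simplices using the upper-triangular structure of morphisms in $\cat{A}_\ast(X)$ (start with maximal $\sigma$, where $C[\sigma]=C(\sigma)$ and $D[\sigma]=D(\sigma)$, and peel off top simplices). You do not supply this, and your argument reads as though the two adjoints are literally the same map. Relatedly, the claim that $T$ sends $M_\sigma$ to ``$\triangle_\ast(\sigma,\partial\sigma)$ placed at $\sigma$'' is incorrect: $T_X(M_\sigma)$ has nonzero components $T_X(M_\sigma)(\tau)=S^{-|\tau|}\mathbb{Z}$ at \emph{every} face $\tau\leq\sigma$ (it is, up to sign of grading, the simplicial cochain complex of $\sigma$ viewed as a $(\mathbb{Z},X)$-module chain complex), so it is not concentrated at $\sigma$ and the degree is $-|\sigma|$, not $|\sigma|$. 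Without either the triangularity induction or the citation to Ranicki--Weiss, the biconditional is not established.
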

\begin{proof}
To give a map from the homotopy coend it is sufficient to give a map
from each component
\[
f_\alpha: C[\alpha]\otimes
D[\alpha]\otimes\triangle_\ast(\alpha)\rightarrow C(\sigma)\otimes
D[\sigma]\otimes\triangle_\ast(\sigma,\partial\sigma)
\]
consistent with inclusions
\[
C[\alpha]\otimes D[\alpha]\otimes\triangle_\ast(\beta)\rightarrow
C[\alpha]\otimes D[\alpha]\otimes\triangle_\ast(\alpha)
\]
for $\beta\leq\alpha$ and
\[
C[\gamma]\otimes D[\gamma]\otimes\triangle_\ast(\alpha)\rightarrow
C[\alpha]\otimes D[\alpha]\otimes\triangle_\ast(\alpha)
\]
for $\gamma\geq\alpha$. Define $f_\alpha$ to be the obvious quotient
map if $\alpha=\sigma$ and $0$ otherwise. It is easy to see that the
$f_\alpha$ are consistent in the above sense because of the special
form of the domain: everything which comes from a bigger or a
smaller simplex is quotiented out in the domain. The second statement
follows from \cite[Prop. 2.7]{RaWei90}
\end{proof}
\hfill
\begin{proposition}\label{Map of boxtimes-products}
\hfill
\begin{enumerate}
 \item[a)] 
For $C,D\in\mathbb{B}(\mathbb{Z},X)$ there is a map
\[
H=H_{C,D}:C\boxtimes D\rightarrow \mathcal{F}(C)\boxtimes
\mathcal{F}(D)
\]
natural in both arguments.
\item[b)] Let $\phi$ be a nondegenerate cycle in $C\boxtimes D$. Its image
$H(\phi)$ is also nondegenerate in $\mathcal{F}(C)\boxtimes
\mathcal{F}(D)$.
\end{enumerate}
\end{proposition}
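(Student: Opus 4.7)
For part (a), my plan is to work on the simplicial model $C\boxtimesRA D \simeq \coend{\sigma} C[\sigma]\otimes D[\sigma]\otimes\triangle_\ast(\sigma)$ from the preceding lemma and to define $H$ one coend summand at a time. For fixed $\sigma$, compose the simplicial Alexander--Whitney diagonal with the inclusion $\triangle_\ast(\sigma)\hookrightarrow S_\ast(\sigma)$ in both factors to obtain a chain map $\triangle_\ast(\sigma)\to S_\ast(\sigma)\otimes S_\ast(\sigma)$. Tensoring with $C[\sigma]\otimes D[\sigma]$ and composing with the canonical structure maps $C[\sigma]\otimes S_\ast(U\cap\sigma)\to \mathcal{F}(C)(U)$ (and their $D$-analogues) coming from the coend definition of $\mathcal{F}$ in Def.~\ref{THE functor} yields, for every open $U$ meeting $\sigma$, an element of $\mathcal{F}(C)(U)\otimes\mathcal{F}(D)(U)$ whose support is compact and contained in $\sigma$. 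Because of this compact support, the element assembles into a consistent family over the $\holim$ diagram indexed by triples $(U,K_1,K_2)$ with $K_1\cap K_2\subseteq U$: it is the element just described whenever $\sigma\subseteq K_1\cap K_2$, and projects to zero in the pair quotient otherwise.

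Descent from the coend follows from its universal property together with naturality of the Alexander--Whitney diagonal: the restriction $C[\sigma']\to C[\sigma]$ and the face inclusion $\triangle_\ast(\sigma)\hookrightarrow\triangle_\ast(\sigma')$ produce two ways of building the same element in $\mathcal{F}(C)$, which agree by the coend relations. Compatibility of $H$ with the symmetry $\tau$ reduces to homotopy cocommutativity of the Alexander--Whitney diagonal, matched against the cosheaf-side symmetry of \cite[Ex.~5.6]{RaWei10}. Naturality in $C$ and $D$ is immediate from the componentwise definition.

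For part (b), my plan is to couple the local criterion from the preceding lemma with a matching locality statement on the cosheaf side. By that criterion, a cycle $\phi$ is nondegenerate iff, for every $\sigma\in X$, the component $L(\phi)_\sigma \in C(\sigma)\otimes D[\sigma]\otimes\triangle_\ast(\sigma,\partial\sigma)$ has adjoint a homotopy equivalence. On the cosheaf side, nondegeneracy of $H(\phi)$ amounts to its adjoint being a $\cattwo{C}''$-equivalence in $\cattwo{D}_X$, and by the homotopy (co)sheaf property of Lemma \ref{homotopy cosheaf condition} this can be tested on arbitrarily small open pairs around each simplex. Lemma \ref{new is old} identifies $\mathcal{F}(C)\bigl(\opst{\sigma},\bigcup_{\tau>\sigma}\opst{\tau}\bigr)\simeq C(\sigma)$ and its $D$-analogue, while the Alexander--Whitney factor specialised to the pair $(\opst{\sigma},\opst{\sigma}\setminus\hat\sigma)$ furnishes the $\triangle_\ast(\sigma,\partial\sigma)$ factor as the relative singular chain complex of $\sigma$. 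Tracing this through identifies the local incarnation of the adjoint of $H(\phi)$ at $\sigma$ with (a representative of) the adjoint of $L(\phi)_\sigma$, so one is a homotopy equivalence iff the other is.

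The main obstacle I anticipate is the bookkeeping of the implicit duality on the cosheaf side: the chain duality in $\cattwo{D}_X$ is not given by a functor but only by the corepresentability axiom of a weak algebraic bordism category, so ``the adjoint of $H(\phi)$'' is a corepresenting morphism in the localisation of $\cattwo{D}_X$ at the $\cattwo{C}''$-equivalences rather than an actual chain map, and pinning down its local behaviour up to the correct homotopy is the main technical step. Once that local identification is established, statement (b) reduces to a direct comparison through the two local criteria.
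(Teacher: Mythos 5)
Your plan for part (a) is in the same spirit as the paper's, though the paper's construction is a bit more economical: instead of building the compatible family over the $\holim$-diagram by hand (which requires tracking the relative chain complexes $S_*(U,U\setminus K_i)\cap\sigma$ for all admissible triples), it simply reuses the already-constructed refinement $\nabla\colon S_\ast(\sigma)\to K(\sigma)\boxtimes K(\sigma)$ from Def.~\ref{L-fundamental class for cosheaves}, precomposed with $\triangle_\ast(\sigma)\to S_\ast(\sigma)$, tensors with $C[\sigma]\otimes D[\sigma]$, shuffles factors, takes the homotopy coend, and composes with the inclusion into $\mathcal{F}(C)\boxtimes\mathcal{F}(D)$. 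If you prefer to build the $\holim$-compatible element directly, you must check the cone/pair compatibility in the $(U,K_1,K_2)$ index category explicitly; the paper sidesteps this by letting $\nabla$ carry that burden. Either way the idea is the same, so (a) is fine.

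Part (b) has a genuine gap. You write that nondegeneracy of $H(\phi)$ "can be tested on arbitrarily small open pairs around each simplex" by the cosheaf property, and then verify the duality only at $(\opst{\sigma},\bigcup_{\tau>\sigma}\opst{\tau})$. But the local criterion from \cite[Prop.~5.8]{RaWei10} (which is what the paper invokes, and what corresponds to your "$\mathcal{C}''$-equivalence of the adjoint") requires the slant product
\[
\colim[K\subset U]\mathcal{F}(C)(U,U\setminus K)^{n-\ast}\rightarrow \mathcal{F}(D)(U)_\ast
\]
to be a chain homotopy equivalence for \emph{every} open $U\subset X$. The homotopy cosheaf property alone does not reduce this to a check on open stars of $X$: it lets you patch along intersections closed under $\cap$, but an arbitrary open set is not covered by open stars of $X$ at the scale needed. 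The paper handles this by (i) proving the claim for unions of open stars in an arbitrary \emph{barycentric subdivision} $X^{(n)}$, (ii) covering a general open $U$ by such unions and running a Mayer--Vietoris plus Zorn argument, and — crucially — (iii) proving a separate lemma that passage to the barycentric subdivision is a functor of weak algebraic bordism categories and preserves nondegeneracy, so that the single-star computation you carried out for $X$ can be reapplied over $X^{(n)}$. Your sketch omits steps (ii) and (iii) entirely; without the subdivision functor you cannot reach small enough opens, and without the Mayer--Vietoris/Zorn argument you have no way to pass from the local check at stars to the criterion at arbitrary $U$. The local identification of the adjoint of $H(\phi)$ with the adjoint of $L(\phi)_\sigma$ that you describe is correct and matches the paper's commutative square, but it is only the base case of a longer argument.
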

\begin{proof}
a) Given a $C\in\mathbb{B}(\mathbb{Z},X)$, $\mathcal{F}(C)$ can be
expressed as a global coend
\[
\mathcal{F}(C)=\coend{\sigma}C[\sigma]\otimes\iota_{\sigma\ast}
K(\sigma)
\]
as in Rm. \ref{F as global coend}. Since $\sigma$ is a manifold with boundary, there exists a chain
$z_\sigma\in\triangle_{|\sigma|}(\sigma)$ mapping to a fundamental cycle in $\triangle_{|\sigma|}(\sigma,\partial\sigma)$. For two objects
$C,D\in\mathbb{A}(\mathbb{Z},X)$ let
\[
h_1:C[\sigma]\otimes D[\sigma]\otimes\triangle_\ast(\sigma)\rightarrow C[\sigma]\otimes D[\sigma]\otimes \iota_{\sigma_\ast}K(\sigma)\boxtimes \iota_{\sigma_\ast}K(\sigma)
\]
be the composition of $id\otimes\nabla$ and the pushforward on the boxtimes component, where $\nabla$ is the refinement of the Alexander-Whitney diagonal approximation 
\[\nabla: S_\ast(\sigma)\rightarrow
K(\sigma)\boxtimes K(\sigma)
\]
mentioned in Def. \ref{L-fundamental class for cosheaves} composed with the map 
\[ 
 \triangle_\ast(\sigma)\rightarrow S_\ast(\sigma).
\] 
Let
\[
\begin{array}{cl}
C[\sigma]\otimes D[\sigma]\;\otimes& \iota_{\sigma_\ast}K(\sigma)\boxtimes \iota_{\sigma_\ast}K(\sigma)\xrightarrow{\;h_2(U,K_1,K_2)\;}
\vspace{1mm}\\
&C[\sigma]\otimes S_\ast((U,U\setminus K_1)\cap\sigma)\otimes D[\sigma]\otimes S_\ast((U,U\setminus K_2)\cap\sigma)\\
\end{array}
\]
be the composition of the corresponding homotopy projection and the
transposition of the inner components. By the universal property we
get a map to the homotopy limit
\[
\begin{array}{cl}
C[\sigma]\otimes D[\sigma]&\otimes\;\iota_{\sigma_\ast}K(\sigma)\boxtimes \iota_{\sigma_\ast}K(\sigma)\xrightarrow{\;\;h_2\;\;}
\vspace{1mm}\\
&\holim[(U,K_1,K_2)]C[\sigma]\otimes S_\ast((U,U\setminus
K_1)\cap\sigma)\otimes D[\sigma]\otimes S_\ast((U,U\setminus
K_2)\cap\sigma)\vspace{1mm}\\
 &\qquad\qquad\qquad\qquad\qquad\qquad=C[\sigma]\otimes\iota_{\sigma_\ast}K(\sigma)\boxtimes D[\sigma]\otimes
 \iota_{\sigma_\ast}K(\sigma)

\end{array}
\]
By taking the homotopy coend of $h_2\circ h_1$ we get
\[
\coend{\sigma}C[\sigma]\otimes
D[\sigma]\otimes\triangle_\ast(\sigma)\rightarrow\coend{\sigma}C[\sigma]\otimes\iota_{\sigma_\ast}K(\sigma)\boxtimes
D[\sigma]\otimes
 \iota_{\sigma_\ast}K(\sigma)
\]
which we compose with the obvious inclusion
\[
\coend{\sigma}C[\sigma]\otimes\iota_{\sigma_\ast}K(\sigma)\boxtimes
D[\sigma]\otimes
 \iota_{\sigma_\ast}K(\sigma)\rightarrow\coend{\sigma}C[\sigma]\otimes\iota_{\sigma_\ast}K(\sigma)\boxtimes\coend{\sigma}
D[\sigma]\otimes
 \iota_{\sigma_\ast}K(\sigma)
\]
and the latter is $\mathcal{F}(C)\boxtimes \mathcal{F}(D)$ per
construction. The required map is 
\[ 
H=incl\circ\coend{\sigma}(h_2\circ h_1).
\]
b) We make use of the local criterion in \cite[Prop.5.8]{RaWei10}.
For every open set $U$ in $X$ and every $j\geq 0$, we have to show
that the slant product with the corresponding projection of
$H(\phi)$
\[
\setminus H(\phi)_U:\colim[K\subset U]\mathcal{F}(C)(U,U\setminus K)^{n-j}\rightarrow
\mathcal{F}(D)(U)_j
\]
is a chain homotopy equivalence. Assume we have already shown it for
open sets which are unions of open stars in an arbitrary subdivision
of $X$. Then we can use the same argument as in the standard proof of
Poincar\'e duality. Cover an arbitrary open set by unions of open
stars and use the Mayer-Vietoris sequence and Zorn's lemma if
needed. Hence the proof of b) reduces to the next lemma.
\end{proof}

\begin{lemma}
If $U$ is a union of open stars in any (barycentric) finite subdivision $X^{(n)}$
of $X$ then the slant product with the corresponding projection of
$H(\phi)$
\[
\colim[K\subset U]\mathcal{F}(C)(U,U\setminus K)^{n-\ast}\rightarrow
\mathcal{F}(D)(U)_\ast
\]
is a natural chain homotopy equivalence
\end{lemma}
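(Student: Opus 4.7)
The plan is to reduce to the case $U=\opst{\tau}$ for a single simplex $\tau$ in $X^{(n)}$ and then apply the local criterion for nondegeneracy from the preceding lemma. Since open stars in $X^{(n)}$ are closed under intersections, Lemma~\ref{homotopy cosheaf condition}b) gives Mayer-Vietoris on the target cosheaf $\mathcal{F}(D)(U)$; the dual statement applies to the compact-support colimit $\colim[K\subset U]\mathcal{F}(C)(U,U\setminus K)^{n-\ast}$. An induction on the number of open stars in $U$ therefore reduces the lemma to the case of a single open star.

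For $U=\opst{\tau}$, I would make both sides explicit. The target identification $\mathcal{F}(D)(\opst{\tau})\simeq D[\tau]$ is already given by Lemma~\ref{new is old}. For the source, a cofinal system of compact $K\subset\opst{\tau}$ together with the coend description of $\mathcal{F}(C)$ should yield a natural chain homotopy equivalence
\[
\colim[K\subset\opst{\tau}]\mathcal{F}(C)(\opst{\tau},\opst{\tau}\setminus K)\;\simeq\; C(\tau)\otimes\triangle_\ast(\tau,\partial\tau),
\]
the point being that for $\sigma>\tau$ the relative singular complex $S_\ast(\opst{\tau}\cap\sigma,(\opst{\tau}\setminus K)\cap\sigma)$ becomes contractible in the colimit (the pair is excisive and $K\cap\sigma$ can be chosen to exhaust $\opst{\tau}\cap\sigma$), so only the $\sigma=\tau$ summand of the global coend survives.

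The crux is then to match, up to natural chain homotopy, the projection of $H(\phi)$ under these identifications with the local projection $L(\phi)$ of the preceding lemma. Inspecting the construction of $H$ in Proposition~\ref{Map of boxtimes-products}(a), the composite $h_2\circ h_1$ is built from the refined Alexander-Whitney diagonal followed by pushforward to the $\boxtimes$-product on $\iota_{\sigma\ast}K(\sigma)$; restricting this to the $(\opst{\tau},\opst{\tau}\setminus K)$ pair and passing to the colimit picks out precisely the $\sigma=\tau$ summand, and the resulting map agrees with the quotient map that defines $L$.

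Granted this matching, the previous lemma together with \cite[Prop.~2.7]{RaWei90} gives that slant product with $L(\phi)_\tau$ is a chain equivalence $C(\tau)^{n-\ast}\to D[\tau]\otimes\triangle_\ast(\tau,\partial\tau)$, which is the claimed statement for $U=\opst{\tau}$. Naturality is inherited from the naturality of $H$ and of Lemma~\ref{new is old}. The main obstacle is the compact-support identification of the source, together with the verification that $H(\phi)$ projects correctly onto the local datum; the remaining steps are essentially formal, relying on Reedy cofibrancy as asserted earlier to freely interchange ordinary and homotopy (co)limits.
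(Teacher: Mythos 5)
Your argument for $U$ a union of open stars of $X$ itself is essentially the paper's: you reduce to a single star by Mayer--Vietoris (using closure under intersections), identify $\mathcal{F}(D)(\opst{\tau})\simeq D[\tau]$ by Lemma~\ref{new is old} and the source with $S^{|\tau|}C(\tau)$, and match the projection of $H(\phi)$ against the local map $L$ to invoke nondegeneracy via \cite[Prop.~2.7]{RaWei90}. Two issues, one minor and one substantive.

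The minor issue is your justification for discarding the $\sigma>\tau$ summands. You say ``$K\cap\sigma$ can be chosen to exhaust $\opst{\tau}\cap\sigma$,'' but $\opst{\tau}\cap\sigma$ is a non-compact open subset of $\sigma$, so no compact $K$ exhausts it, and the colimit over $K$ does not degenerate to a single absolute complex. The correct mechanism (used in the paper) is first that the system with $K\supset\{\hat{\tau}\}$ and $\opst{\tau}\setminus K\simeq\opst{\tau}\setminus\hat{\tau}$ is cofinal, so the colimit computes $\mathcal{F}(C)(\opst{\tau},\opst{\tau}\setminus\hat{\tau})$; and then for $\rho>\tau$ the pair $(\rho\cap\opst{\tau},\;(\rho\cap\opst{\tau})\setminus\hat{\tau})$ deformation retracts to $(\mathring{\rho},\mathring{\rho})$, killing those summands. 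Your reasoning gets the right answer for the wrong reason.

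The substantive gap is that the statement quantifies over \emph{all} finite barycentric subdivisions $X^{(n)}$, and your proof only handles $n=0$. For $\tau$ a simplex of $X^{(n)}$ with $n>0$, the expressions $D[\tau]$, $C(\tau)$, $\triangle_\ast(\tau,\partial\tau)$ indexed by simplices of $X$ are not defined, and Lemma~\ref{new is old} does not apply to open stars of $X^{(n)}$, which are generally not unions of open stars of $X$. This case is not a formality: it is exactly what makes the subsequent Zorn/Mayer--Vietoris argument in part b) of Prop.~\ref{Map of boxtimes-products} work, since one needs arbitrarily fine open stars to cover arbitrary open $U\subset X$. The paper resolves this by passing $C,D$ through the subdivision functor to $(\mathbb{Z},X^{(n)})$-module chain complexes $C^{(n)},D^{(n)}$, and then invokes the lemma that subdivision is a functor of weak algebraic bordism categories $\cat{B}(\mathbb{Z},X)\to\cat{B}(\mathbb{Z},X')$, so that nondegeneracy of $\phi$ is preserved and the single-star computation can be rerun verbatim over $X^{(n)}$. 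Without that ingredient your proof does not establish the stated lemma.
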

\begin{proof}
We prove the statement for unions of open stars
in the original simplicial complex $X$ first. Let $U$ be an open star
$\opst{\tau}=\bigcup_{\sigma\geq\tau}\mathring{\sigma}$. The system
$(U\setminus K)_K$ with $(U\setminus K)$ homotopy equivalent to
$(U\setminus \hat{\tau})$, where $\hat{\tau}$ is the barycenter of
$\tau$, is cofinal in the system of all $(U\setminus K)$. Therefore
the map in the statement of the lemma can be rephrased
\[
\mathcal{F}(C)(\opst{\tau},\opst{\tau}\setminus
\hat{\tau})^{n-\ast}\rightarrow \mathcal{F}(D)(\opst{\tau})_\ast.
\]
This is a homotopy equivalence if and only if the component of $H(\phi)$ in
\[
\mathcal{F}(C)(\opst{\tau},\opst{\tau}\setminus \hat{\tau})\otimes
\mathcal{F}(D)(\opst{\tau})
\] 
is nondegenerate. We have a commutative diagram

\[
\begindc{0}[35] 
\obj(1,3)[1]{$C\boxtimes D$}
\obj(7,3)[2]{$\mathcal{F}(C)\boxtimes\mathcal{F}(D)$}
\obj(1,1)[3]{$C(\tau)\otimes D[\tau]\otimes\triangle_\ast(\tau,\partial\tau)$} 
\obj(7,1)[4]{$\mathcal{F}(C)(\opst{\tau},\opst{\tau}\setminus \hat{\tau})\otimes
\mathcal{F}(D)(\opst{\tau})$}

\mor{1}{2}{$H$}[1,0] \mor{2}{4}{$proj$}[-1,0] \mor{1}{3}{$
L$}[-1,0]\mor{3}{4}{$ id\otimes\nabla $}[-1,0]
\enddc
\]
Let us take a closer look at the first tensor
factor in the right bottom corner. The coend varies over terms of the form
\[
C[\rho]\otimes
S(\mathring{\rho}\cup\bigcup_{\rho>\sigma\geq\tau}\mathring{\sigma},
\mathring{\rho}\cup\bigcup_{\rho>\sigma\geq\tau}\mathring{\sigma}\setminus
\hat{\tau}).
\]
Using Rm. \ref{coend as big sum} we can view it as a sum
\[
C(\tau)\otimes S(\mathring{\tau},\mathring{\tau}\setminus\hat{\tau})\oplus \bigoplus_{\rho>\tau}C(\rho)\otimes
S(\mathring{\rho}\cup\bigcup_{\rho>\sigma\geq\tau}\mathring{\sigma},
\mathring{\rho}\cup\bigcup_{\rho>\sigma\geq\tau}\mathring{\sigma}\setminus
\hat{\tau}).
\]
By excision we have
\[
S(\mathring{\rho}\cup\bigcup_{\rho>\sigma\geq\tau}\mathring{\sigma},\mathring{\rho}\cup\bigcup_{\rho>\sigma\geq\tau}\mathring{\sigma}\setminus\hat{\tau})\simeq S(\mathring{\rho},\mathring{\rho})\simeq 0
\]
and deduce that $\mathcal{F}(C)(\opst{\tau},\opst{\tau}\setminus\hat{\tau})\simeq C(\tau)\otimes S(\mathring{\tau},\mathring{\tau}\setminus\hat{\tau})\simeq S^{|\tau|}C(\tau)$.

Because of Lemma \ref{new is old} the second tensor factor $\mathcal{F}(D)(\opst{\tau})$ is naturally
equivalent to $D[\tau]$. Thus we have shown the natural equivalence 
\[
 \mathcal{F}(C)(\opst{\tau},\opst{\tau}\setminus\hat{\tau})\otimes\mathcal{F}(C)(\opst{\tau})\simeq S^{|\tau|} C(\tau)\otimes D[\tau]
\]
and can rewrite the above commutative square as
\[
\begindc{0}[35] 
\obj(1,3)[1]{$C\boxtimes D$}
\obj(7,3)[2]{$\mathcal{F}(C)\boxtimes\mathcal{F}(D)$}
\obj(1,1)[3]{$C(\tau)\otimes D[\tau]\otimes\triangle_\ast(\tau,\partial\tau)$} 
\obj(7,1)[4]{$S^{|\tau|}C(\tau)\otimes D[\tau]$}

\mor{1}{2}{$H$}[1,0] \mor{2}{4}{$proj$}[-1,0] \mor{1}{3}{$
L$}[-1,0]\mor{3}{4}{$ id\otimes\nabla $}[-1,0]
\enddc
\]

The projection of $H(\phi)$ is nondegenerate if the anticlockwise composition maps $\phi$ to a nondegenerate cycle. Since the map
\[
C(\tau)\otimes
D[\tau]\otimes\triangle_\ast(\tau,\partial\tau)=S^{|\tau|}C(\tau)\otimes D[\tau]\xrightarrow{id\otimes\nabla} S^{|\tau|}C(\tau)\otimes D[\tau]
\]
is homotopic to identity this is obvious.\\\\
Let $U$ now be a finite union of open stars. Since the intersection
of two open stars is an open star the statement of the lemma for $U$
follows from a Mayer-Viertoris argument and the corresponding case
of a single star.
To deal with (unions of) open stars in a finite subdivision $X^{(n)}$ we proceed entirely analogously to the above by considering the images $C^{(n)}, D^{(n)}$ in the category $\cattwo{K}(\mathbb{Z},X^{(n)})$ of $\mathbb{Z}$-modules over the $n$-th barycentric subdivision. That this makes sense and, more importantly, that the nondegeneracy of a cycle in $C\boxtimes D$ is preserved after passing to subdivisions (and looking at duality properties over smaller open stars) is the content of the next lemma. 

\end{proof}
 
\begin{lemma}
 Let $X'$ denote the barycentric subdivision of the simplicial complex $X$. There exists a functor of weak algebraic bordism categories from $\cat{B}(\mathbb{Z},X)$ to $\cat{B}(\mathbb{Z},X')$. 
\end{lemma}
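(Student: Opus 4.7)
The plan is to construct the functor in two stages: first an underlying exact functor $F\colon \mathbb{A}(\mathbb{Z},X)\to\mathbb{B}(\mathbb{Z},X')$, and then a natural transformation between the chain products. For the first stage I use that every $(\mathbb{Z},X)$-module decomposes canonically as a direct sum of free modules $M_\sigma$ on one generator (Remark \ref{Generation by free (Z,X)-modules}); it therefore suffices to define $F$ on each $M_\sigma$ and extend additively. I set $F(M_\sigma)$ to be the simplicial chain complex of the barycentric subdivision of $\sigma$ relative to $\partial\sigma$, regarded as an $X'$-based chain complex supported on those $\tau'\in X'$ lying in $\mathring\sigma$, with the simplicial boundary. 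Degreewise totalisation then extends $F$ from modules to chain complexes. Exactness, preservation of contractible complexes, and membership of the image in $\cat{B}(\mathbb{Z},X')$ all follow from the fact that simplicial subdivision preserves these properties locally at each simplex.

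For the natural transformation $h$, I use the coend description
\[
C\boxtimesRA D\;\simeq\;\coend{\sigma\in X} C[\sigma]\otimes D[\sigma]\otimes\triangle_\ast(\sigma)
\]
from the opening lemma of Section \ref{sec:6} together with its analogue for $F(C)\boxtimes_{\mathcal{K}_{(\mathbb{Z},X')}}F(D)$ indexed over $\tau'\in X'$. The classical subdivision chain equivalence $\triangle_\ast(\sigma)\to\triangle_\ast(\sigma')$ applied factorwise, combined with the natural comparisons $[F(C)][\hat\sigma]\simeq [C][\sigma]$ (valid because $F(M_\sigma)$ is supported on the simplices refining $\sigma$), induces the required map $h$ between the two coends. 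Symmetry is preserved by construction, since the subdivision equivalence is applied symmetrically to both tensor factors.

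The main obstacle, and the point to which the lemma explicitly draws attention, is that $h$ preserves nondegeneracy. By the local criterion stated just before Proposition \ref{Map of boxtimes-products}, a cycle $\phi\in C\boxtimesRA D$ is nondegenerate iff its image in each $C(\sigma)\otimes D[\sigma]\otimes\triangle_\ast(\sigma,\partial\sigma)$ is nondegenerate, and analogously for $h(\phi)$ over $X'$ at each $\tau'$. For $\tau'$ lying in the interior $\mathring\sigma$ of a unique minimal simplex $\sigma\in X$, the $\tau'$-projection of $h(\phi)$ factors through the $\sigma$-projection of $\phi$ composed with the local subdivision chain equivalence on $\triangle_\ast(\sigma,\partial\sigma)$ arising from Poincar\'e--Lefschetz duality for $\sigma$ refined by its barycentric subdivision. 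Since all the intervening maps are chain equivalences once $\phi$ is locally nondegenerate at $\sigma$, local nondegeneracy transfers from the $\sigma$-projections to the $\tau'$-projections, which completes the construction of the required functor of weak algebraic bordism categories.
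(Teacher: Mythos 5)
Your construction of the underlying functor $F$ takes a different route from the paper: the paper works directly with the contravariant values $[C][\sigma]$ and sets $C'[\hat\sigma_{i_1}\cdots\hat\sigma_{i_k}] = C[\sigma_{i_k}]\otimes\triangle_\ast(\hat\sigma_{i_1}\cdots\hat\sigma_{i_k})$, extracting $C'(\tau')$ afterwards as a quotient and pointing to Adams--Florou's algebraic subdivision for the duality statement (so the paper's own proof is a sketch). You instead decompose into the one-generator free modules $M_\sigma$ and set $F(M_\sigma)=\triangle_\ast(\sigma',\partial\sigma')$, based on simplices interior to $\sigma$. The difference is not cosmetic: your $F$ breaks down at the first step.

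The complex $\triangle_\ast(\sigma',\partial\sigma')$ with that basing is not a chain complex in $\cat{A}(\mathbb{Z})_\ast(X')$. Its simplicial boundary sends the generator at an interior $n$-simplex $\tau'\subset\mathring\sigma$ to generators at interior $(n-1)$-faces $\rho'<\tau'$, whereas an $X'$-based chain complex may only have nonzero components $M(\tau')\to N(\rho')$ with $\rho'\geq\tau'$. (For $\sigma$ a $1$-simplex with barycentre $m$ the boundary $[vm]\mapsto[m]$ is already forbidden.) Even granting a corrected basing, the extension by degreewise totalisation loses the $X$-triangular data of $C$: a differential component $d(\tau,\sigma)\colon C_n(\sigma)\to C_{n-1}(\tau)$ with $\tau>\sigma$ would have to become a degree-zero, $X'$-triangular chain map $F(M_\sigma)\to F(M_\tau)$. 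Since $F(M_\sigma)$ places $\mathbb{Z}$ in chain degree $|\sigma'|$ at $\sigma'\subset\mathring\sigma$ and $F(M_\tau)$ places $\mathbb{Z}$ in chain degree $|\tau'|$ at $\tau'\subset\mathring\tau$, a nonzero component requires $|\sigma'|=|\tau'|$ and $\tau'\geq\sigma'$, hence $\tau'=\sigma'$, which is impossible because $\mathring\sigma$ and $\mathring\tau$ are disjoint. So $F(C)$ collapses to a direct sum of diagonal pieces and forgets the chain structure of $C$. Relatedly, $F(M_\sigma)$ assembles to $\triangle_\ast(\sigma',\partial\sigma')\simeq S^{|\sigma|}\mathbb{Z}$ rather than to $\mathbb{Z}$, so $F$ fails to preserve the assembly, which any subdivision functor inducing an $L$-theory equivalence must do. The correct input datum is the cosheaf $\sigma\mapsto[C][\sigma]$, not the direct summands $C(\sigma)$, and this is exactly what the paper's formula exploits. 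The remainder of your argument (the map $h$ via the coend model and the local nondegeneracy check) is a reasonable outline, but it cannot be carried out until the construction of $F$ is repaired.
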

\begin{proof}
To some extent this seems to be folklore. Let $C$ be an object in $\cat{B}(\mathbb{Z},X)$. Following the same idea as in extending cosheaves over open stars to cosheaves over arbitrary open sets we can set
\[
C'[\hat{\sigma}_{i_1}\ldots\hat{\sigma}_{i_k}]=C[\sigma_{i_k}]\otimes \triangle_\ast(\hat{\sigma}_{i_1}\ldots\hat{\sigma}_{i_k}).
\]
This value of $C'$ over $\hat{\sigma}_{i_1}\ldots\hat{\sigma}_{i_k}$ is now determined by
\[
C'(\hat{\sigma}_{i_1}\ldots\hat{\sigma}_{i_k})=C'[\hat{\sigma}_{i_1}\ldots\hat{\sigma}_{i_k}]\,\, /\colim[\sigma'\geq\hat{\sigma}_{i_1}\ldots\hat{\sigma}_{i_k}] C'[\sigma'].
\]
Working out the effect on the morphisms one can show that $C'$ is in $\cat{B}(\mathbb{Z},X')$. This assignment is also similar in spirit to the algebraic subdivision functor of Adams-Florou (cf. \cite{AF12}). Using his explicit description of duals one should be able to show that subdivision is a functor of algebraic bordism categories. 
\end{proof}
\section{Equivalence of $L$-spectra} \label{sec:7}
The next theorem presents our main result.
\begin{theorem}\label{Main Theorem}
The functor $\mathcal{F}$ of Def. \ref{THE functor} induces equivalences of spectra
\[
\SymL(\cattwo{K}_{(\mathbb{Z},X)})\xrightarrow{\simeq} \SymL(\cattwo{K}_X), \qquad \QuadL(\cattwo{K}_{(\mathbb{Z},X)})\xrightarrow{\simeq} \QuadL(\cattwo{K}_X)
\]
and in particular for every $n\geq 0$ isomorphisms
\[
L^n(\cattwo{K}_{(\mathbb{Z},X)})\xrightarrow{\cong} L^n(\cattwo{K}_X), \textnormal{  and  } L_n(\cattwo{K}_{(\mathbb{Z},X)})\xrightarrow{\cong} L_n(\mathcal{K}_X).
\]
\end{theorem}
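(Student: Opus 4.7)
The plan is to package $(\mathcal{F}, H)$ as a functor of weak algebraic bordism categories $\catZXmod \to \cattwo{K}_X$, apply Proposition \ref{An equivariant functor induces maps of spectra} to obtain natural maps of $L$-spectra, and then identify both sides as the same generalized homology theory on polyhedra so that an equivalence on coefficients propagates to every $X$.

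First I would combine Sections 5 and 6. Section 5 supplies the functor $\mathcal{F}\colon\cat{B}(\mathbb{Z},X)\to \cattwo{D}_X$, which is exact: it preserves mapping cones by its description as a coend of tensor products, and it sends $\cat{C}(\mathbb{Z},X)$-contractible complexes to $\cattwo{C}''$-contractible ones by Lemma \ref{new is old}. Section 6 provides the natural transformation $H$ preserving nondegeneracy (Proposition \ref{Map of boxtimes-products}). The $\mathbb{Z}/2$-equivariance of $H$ is immediate from its construction via $id\otimes\nabla$ followed by pushforward, since the refined Alexander-Whitney map $\nabla$ is equivariant by design. Thus $(\mathcal{F}, H)$ satisfies both axioms for a functor of weak algebraic bordism categories, and Proposition \ref{An equivariant functor induces maps of spectra} yields the desired spectrum maps $\SymL(\catZXmod)\to \SymL(\cattwo{K}_X)$ and $\QuadL(\catZXmod)\to\QuadL(\cattwo{K}_X)$.

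Next I would reduce the equivalence claim to a comparison on coefficients. By Theorem \ref{X to (Z,X)-modules is homology theory}, $X\mapsto \SymL(\catZXmod)$ coincides with $\SymL(\mathbb{Z})$-homology; by Theorem \ref{X to cosheaves is homology theory}, $X\mapsto \SymL(\cattwo{K}_X)$ is homotopy invariant and excisive, hence also a generalized homology theory. Naturality of $\mathcal{F}$ under simplicial maps (Lemma \ref{F commutes with maps induced by a simplicial map}) implies the induced spectrum map is a transformation of generalized homology theories on polyhedra, so it suffices to verify that it is an equivalence on the coefficient spectrum, i.e.\ at $X = \ast$. On a point both weak algebraic bordism categories collapse to essentially the standard setting of finite chain complexes of free $\mathbb{Z}$-modules with $C\boxtimes D \simeq C\otimes_{\mathbb{Z}} D$: direct inspection shows $\mathcal{F}(C)\simeq C$ (as a constant cosheaf) and $H$ becomes homotopic to the identity of $C\otimes_{\mathbb{Z}} D$. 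Both spectra thus canonically model $\SymL(\mathbb{Z})$ and the map between them is a weak equivalence; a Mayer-Vietoris/cellular induction over a triangulation (for finite polyhedra) together with passage to filtered colimits (for arbitrary ones) extends the equivalence to every polyhedron $X$. The quadratic case runs formally identically, with homotopy orbits replacing homotopy fixed points.

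The main obstacle I anticipate is making the coefficient comparison precise as a map of Kan $\Delta$-sets rather than merely of abstract $L$-groups. One has to track $\mathcal{F}$ and $H$ through the simplicial bordism construction, verifying that a symmetric Poincar\'e $\mathbb{Z}$-chain complex and its image under $\mathcal{F}$ are identified as SAPCs via a canonical chain equivalence, and that this identification is compatible with the gluing constructions of $[m]$-ads used to build the $\Omega$-spectrum. Once the coefficient identification is established, the remainder is an application of the standard comparison theorem for generalized homology theories on CW complexes.
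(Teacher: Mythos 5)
Your proposal follows essentially the same route as the paper's own proof: package $(\mathcal{F}, H)$ as a functor of weak algebraic bordism categories via Proposition \ref{An equivariant functor induces maps of spectra}, use Lemma \ref{F commutes with maps induced by a simplicial map} for naturality, invoke Theorems \ref{X to (Z,X)-modules is homology theory} and \ref{X to cosheaves is homology theory} to see both sides as homology theories, and conclude by checking the coefficient spectrum at a point where both reduce to $\SymL(\mathbb{Z})$. The paper is somewhat terser at the point--comparison step (it simply records the chain of isomorphisms $\SymL(\cattwo{K}_{(\mathbb{Z},pt)})\cong\SymL(\mathbb{Z})\cong\SymL(\cattwo{K}_{pt})$ without spelling out that $\mathcal{F}$ realizes it), so your worry about verifying that $\mathcal{F}$ and $H$ reduce to the identity up to chain equivalence at a point is a legitimate point of care, but it is not a different strategy.
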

\begin{proof}
We treat the symmetric case only, the quadratic one being completely analogous. Observe that $\mathcal{F}$ maps $\cat{C}(\mathbb{Z},X)$-contractible objects of $\cat{B}(\mathbb{Z},X)$ to $\cattwo{C}_X$-con-tractible cosheaves in $\cattwo{D}_X$. The natural transformation $H$ of Prop. \ref{Map of boxtimes-products} makes the functor $\mathcal{F}$ into a functor of weak algebraic bordism categories. Thus, by Prop. \ref{An equivariant functor induces maps of spectra} there is an induced map of spectra
\[
\SymL(\cattwo{K}_{(\mathbb{Z},X)})\rightarrow \SymL(\mathcal{K}_X).
\] 
For a map of simplicial complexes $f:X\rightarrow Y$ we get a square
\[
\begindc{0}[25]
\obj(1,3)[1]{$\SymL(\cattwo{K}_{(\mathbb{Z},X)})$}
\obj(4,3)[2]{$\SymL(\cattwo{K}_X)$}
\obj(1,1)[3]{$\SymL(\cattwo{K}_{(\mathbb{Z},Y)})$}
\obj(4,1)[4]{$\SymL(\mathcal{K}_Y)$}

\mor{1}{2}{$\mathcal{F}$}[1,0] 
\mor{2}{4}{$f_\ast $}[-1,0] 
\mor{1}{3}{$f_\ast $}[-1,0]
\mor{3}{4}{$\mathcal{F} $}[1,0]
\enddc
\]
which is commutative because of Lemma \ref{F commutes with maps induced by a simplicial map}. Thus $\mathcal{F}$ is a natural transformation between functors $X\mapsto\SymL(\cattwo{K}_{(\mathbb{Z},X)}$ and $X\mapsto\SymL(\cattwo{K}_X)$ and both are homotopy invariant and excisive because of Theorems \ref{X to (Z,X)-modules is homology theory} and \ref{X to cosheaves is homology theory}. For a point $pt$ there are isomorphisms
\[
\SymL(\cattwo{K}_{\mathbb{Z},pt)})=\SymL(\Lambda(\mathbb{Z}))\cong\SymL(\mathbb{Z})\cong\SymL(\cattwo{K}_{pt})
\]
which implies that $\mathcal{F}$ is an isomorphism of homology theories.

\end{proof}
\section{Final remarks}\label{sec:8}
 With insignificantly more effort all the results above can be proved for the simplicial complex $X$ being replaced by a $\triangle$-set: the $L$-homology description of chain complexes of $\mathbb{Z}$-modules parametrised by a $\triangle$-set $X$ (cf. \cite{RaWei12}) can be canonically and naturally identified with the $L$-theory of $\cattwo{K}_{|X|}$.

Furthermore one can generalize the main theorem from $\mathbb{Z}$-coefficients to coefficients in any commutative ring $R$ with the trivial involution. Replacing the category of $\mathcal{O}(X)$-based free abelian groups $\cattwo{A}_X$ in Def. \ref{definition O(X)-modules} by the analogous category of  $\mathcal{O}(X)$-based free $R-$modules $\cattwo{A}_X\otimes_\mathbb{Z}R$ one constructs the weak algebraic bordism category $\cattwo{K}^R_X$, such that $\cattwo{K}^\mathbb{Z}_X=\cattwo{K}_X$. A generalisation of Thm. \ref{Main Theorem} provides then a functor of weak algebraic bordism categories
\[
\cattwo{K}_{(R,X)}\rightarrow \cattwo{K}^R_X
\]
inducing an isomorphism on symmetric and quadratic $L$-groups.\\

As pointed out in the introduction, a description of $L$-homology is the first step in the description of the assembly map and the explicit construction of the total surgery obstruction. By constructing the functor $F$ and showing the main theorem, this work merely builds a rope bridge between the combinatorial framework of Ranicki and the more flexible but less explicit language of Ranicki-Weiss.\linebreak

We want to end with a very brief reminder of $L$-theory descriptions which exist in the literature.
An honest sheaftheoretic description of $L$-homology, assembly map and total surgery obstruction was undertaken by Hutt in \cite{Hutt}. Unfortunately there is a mistake in this preprint and it was never published. In \cite{Woo08} Woolf considers a triangulated version of Hutt's framework. Under the assumption that $R$ is a regular Noetherian ring of finite Krull dimension and $\frac{1}{2}\in R$ he identifies Ranicki's construction of (free) symmetric $L$-homology $L^\ast(\cattwo{K}_{(R,X)})$ with the Witt groups (in the sense of Balmer, see \cite{Bal05}) of the triangulated category ( with duality ) of constructible (w.r.t. the stratification induced by the simplicial structure) sheaves of $R$-module complexes $W^c_\ast(X)$. Putting these functors, along with $\pi_\ast \SymL(\cattwo{K}_X)$, in one diagram we get

\[
\begindc{0}[5]
\obj(10,20)[1]{$W^c_\ast(X)$} 
\obj(30,20)[2]{$\pi_\ast\SymL(\mathcal{K}_X)$}
\obj(20,10)[3]{$H_\ast(X;\SymL(R))$}
\mor{2}{1}{} [\atright,\dashline]
\mor{3}{2}{} 
\mor{3}{1}{$ $}[\atleft,\solidarrow]
\enddc
\]
where all three terms are isomorphic via the solid arrows. It seems natural to search for a construction of a canonical, geometric morphism for the dashed line.\\

On the other hand in his doctoral thesis \cite{Epp07} Eppelmann gives a geometric description of a 2-connected cover of symmetric $L$-homology as a (singular) bordism homology $\Omega_\ast^{IP}(X)$ of spaces satisfying integral Poincar\'e duality in intersection homology. A natural question is how this description fits into the above diagram.\\ 

In comparing our work to \cite{Woo08}, it is important to remark that in the setup of \cite{Woo08} the condition $\frac{1}{2}\in R$ cannot be dropped and Woolf's result cannot be generalised to integral coefficients. This  restriction is specific to Balmer's theory. The $L$-theory of Ranicki-Weiss cosheaf complexes is different in nature since the duality is only given on the homotopy category. The necessity of inverting 2 however, also finds its way into \cite{RaWei10}. To make their proof of topological invariance of rational Pontryagin classes independent of difficult arguments of \cite{KS77}, Ranicki and Weiss introduce the idempotent completion $r\mathcal{D}_X$ of the category $\mathcal{D}_X$ (see Def. \ref{Definition:well generated}) underlying $\cattwo{K}_X$. Excision for the functor  $X\mapsto\SymL(r\mathcal{D}_X)$ is only proved up to 2-torsion (cf. \cite[Thm. 8.3]{RaWei10}). 

\section{Appendix}\label{sec:9}
It is apparent from section 3 that we are using the notion of homotopy limits (and colimits) of chain complexes in this article. The classical source quoted at this point is \cite{BK72}. This deals with diagrams of simplicial sets or diagrams of topological spaces, which amounts to the same. Since the category of positively graded chain complexes of $R$-modules is Quillen equivalent to the category of simplicial $R$-modules, we can basically use the original constructions for simplicial sets to get the right notions for chain complexes. A framework for homotopy limits in general model categories can be found in \cite{Hi03} or \cite{DHKS04}. For our purposes only the properties of homotopy (co)limits matter. Nevertheless, it might be convenient for the reader to see some explicit models. This section is highly non-original and the author benefited significantly from the exposition in \cite{Du14}.
\begin{definition}
 Let $C$ be a simplicial chain complex of abelian groups i.e. a functor from $\triangle^{op}$ to $\ChAb$. Denote now by $D^\ast$ the cosimplicial object in $\ChAb$ given by taking the simplicial chain complex of the standard simplex $\triangle^n$ viewed as a simplicial space. Define the \textit{realisation} of $C$  by the coequaliser
\[
\{C\}=\coeq \big[ \bigoplus_{[n]\rightarrow[k]} C^k\otimes D^n \rightrightarrows \bigoplus_{[n]} C^n\otimes D^n \big]
\]
where the top map is induced by $[n]\rightarrow [k]$ and the bottom by the standard map $\triangle^k\rightarrow \triangle^n$.\\
Dually, let $C$ be a cosimplicial object in $\ChAb$.  Define its \textit{totalisation} by the equaliser
\[
\Tot(C)=\eq\big[ \prod_{[n]} \Hom(D^n, C^n) \rightrightarrows \prod_{[n]\rightarrow [k]} \Hom(D^n, C^k) \big].
\]
\end{definition}

\begin{definition}
 Let $\mathcal{C}$ be a (small) category and $F:\mathcal{C}\rightarrow \ChAb$ a functor to chain complexes of abelian groups. Define its \textit{simplicial} replacement $\srep(F)$ by the simplicial object in $\ChAb$ given in degree $n$ by 
\[
\srep(F)_n=\bigoplus_{i_n\rightarrow i_{n-1}\rightarrow\cdots\rightarrow i_0} F(i_n)
\]
with $i_n\rightarrow i_{n-1}\rightarrow\cdots\rightarrow i_0$ being a chain of composable maps in $\mathcal{C}$. For $0\leq j\leq n$, the degeneracy map $s_j:\srep(F)_n\rightarrow\srep(F)_{n+1}$ sends $F(i_n)$ in the component with index
\[
i_n\rightarrow i_{n-1}\rightarrow\cdots\rightarrow i_0 
\]
to $F(i_n)$ at
\[
i_n\rightarrow i_{n-1}\rightarrow \cdots\rightarrow i_j\xrightarrow{id} i_j\rightarrow\cdots\rightarrow i_0.
\]
For $0\leq j < n$ the face map $d_j:\srep(F)_n\rightarrow\srep(F)_{n-1}$ sends the copy of $F(i_n)$ at
\[
i_n\rightarrow i_{n-1}\rightarrow\cdots\rightarrow i_0
\]
to $F(i_n)$ sitting at
\[
i_n\rightarrow i_{n-1}\rightarrow i_{j+1}\rightarrow i_{j-1}\rightarrow\cdots\rightarrow i_0
\]
with $i_{j+1}\rightarrow i_{j-1}$ being the composition $i_{j+1}\rightarrow i_{j}\rightarrow i_{j-1}$. For $j=n$ the face map $d_n$ maps $F(i_n)$ at
\[
i_n\rightarrow i_{n-1}\rightarrow\cdots\rightarrow i_0
\]
to $F(i_{n-1})$ at 
\[
i_n-1\rightarrow i_{n-1}\rightarrow\cdots\rightarrow i_0
\]
via $F(i_n\rightarrow i_{n-1})$.\\

Dually, define the cosimplicial replacement $\csrep(F)$ as a cosimplicial object in the category $\ChAb$ given in degree $n$ by\\
\[
\csrep(F)_n=\prod_{i_0\rightarrow\cdots\rightarrow i_n} F(i_n)
\]
with analogous coface and codegeneracy maps.
\end{definition}
\begin{definition}
 Let $F$ be a functor from a small category $\mathcal{C}$ to chain complexes of abelian groups $\ChAb$. Define the homotopy limit of $F$ as
\[
\holim[\mathcal{C}] F=\Tot(\csrep(F)).
\]
and dually the homotopy colimit of $F$ as
\[
\hocolim[\mathcal{C}] F=\{\srep(F)\}.
\]
\end{definition}
For those familiar with homotopy limits and model categories, the next two propositions are fairly basic. We refer e.g. to \cite{Hi03} for a thorough discussion.
\begin{proposition}
Let $F_1,F_2$ be functors from a small category $\mathcal{C}$ to bounded chain complexes of projective $R$-modules, for any ring $R$. Assume there is a natural transformation $\eta:F_1\Rightarrow F_2$ such that for every $c\in\mathcal{C}$, $\eta_c$ is a weak equivalence. Then the induced map
\[
\hocolim F_1\rightarrow \hocolim F_2
\]
is also a weak equivalence.
\end{proposition}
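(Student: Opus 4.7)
The plan is to reduce the claim to the fact that the realization functor $\{-\}$ preserves levelwise weak equivalences between simplicial chain complexes satisfying an appropriate cofibrancy condition. The natural transformation $\eta$ induces a morphism of simplicial chain complexes $\srep(\eta):\srep(F_1)\to \srep(F_2)$ which in each simplicial degree $n$ is the direct sum
\[
\bigoplus_{i_n\to\cdots\to i_0}\eta_{i_n}:\bigoplus_{i_n\to\cdots\to i_0}F_1(i_n)\longrightarrow\bigoplus_{i_n\to\cdots\to i_0}F_2(i_n).
\]
Since each $\eta_c$ is a quasi-isomorphism of bounded complexes of projective $R$-modules and arbitrary direct sums preserve quasi-isomorphisms in $\ChAb$, the map $\srep(\eta)_n$ is a weak equivalence for every $n$.

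Next I would verify that $\srep(F_1)$ and $\srep(F_2)$ are Reedy cofibrant as simplicial objects in $\ChAb$ equipped with the projective model structure. At level $n$ the latching object $L_n\srep(F)$ is identified with the direct summand indexed by those strings $i_n\to i_{n-1}\to\cdots\to i_0$ in which at least one arrow is an identity (the image of the degeneracies), and the latching map is the inclusion of that sub-summand. This inclusion is degreewise split, and its cokernel is a direct sum of shifted copies of the $F(i_n)$, which are bounded projective complexes and hence cofibrant. Hence the latching map is a cofibration and $\srep(F)$ is Reedy cofibrant.

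The realization $\{-\}$ is the coend $\int^{[n]}\srep(F)_n\otimes D^n$, and when the simplicial object is Reedy cofibrant this agrees with its homotopy coend. Under the Dold--Kan correspondence the realization corresponds to geometric realization of simplicial abelian groups, which is a left Quillen functor. A Reedy weak equivalence between Reedy cofibrant objects is therefore sent to a weak equivalence. Applying this to $\srep(\eta)$ yields the required quasi-isomorphism $\hocolim F_1\to\hocolim F_2$.

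The step requiring actual care is the Reedy cofibrancy check, or equivalently the argument that the realization commutes with levelwise weak equivalences. If one prefers to avoid model-categorical bookkeeping, the same conclusion can be reached concretely via the skeletal filtration of $\{\srep(F)\}$: it yields a first-quadrant spectral sequence whose $E^1$-term is the homology of $\srep(F)_n$ modulo degenerate summands and which converges to $H_\ast(\hocolim F)$. The map induced by $\eta$ is an isomorphism at $E^1$ by the first paragraph (together with the remark that splitting off degenerate summands respects weak equivalences of projective complexes), hence an isomorphism of abutments by the comparison theorem for spectral sequences. This combinatorial handling of the degenerate/nondegenerate decomposition is the only nontrivial ingredient; everything else is formal.
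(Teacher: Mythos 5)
The paper itself offers no proof of this proposition; it declares it basic and cites \cite{Hi03}. Your argument is the standard one found in that reference and is correct: $\srep(\eta)$ is a levelwise quasi-isomorphism because homology commutes with direct sums; $\srep(F_i)$ is Reedy cofibrant because, by the Eilenberg--Zilber decomposition of the nerve of $\mathcal{C}$, the latching map at level $n$ is the inclusion of the degenerate direct summand and its cokernel is a direct sum of bounded complexes of projectives, hence cofibrant; and the realisation $\{-\}$ sends levelwise weak equivalences between Reedy cofibrant simplicial objects to weak equivalences. The skeletal-filtration spectral sequence you sketch as an alternative is equally valid and reaches the same conclusion without the model-categorical machinery, provided one tracks the normalized summand carefully (as you do), so that the $E^1$-comparison is between homologies of projective complexes; both routes are sound.
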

A poset $K$ is a Reedy category. A functor $F$ from $K$ to chain complexes is called \textit{Reedy cofibrant} if and only if for every $b\in K$ the induced map
\[
\colimtwo{a\in K}{a\neq b,a\rightarrow b}F(a)\rightarrow F(b)
\]
is a cofibration i.e. a degreewise split injection. We will make use of the following
\begin{proposition}
Let $X$ be a simplicial complex viewed as a poset in the obvious way. If a functor $F:X\rightarrow \ChAb$ is Reedy cofibrant the colimit computes the homotopy colimit i.e. the canonical map
\[
\colim F\xrightarrow{\simeq} \hocolim F
\] 
is a weak equivalence.
\end{proposition}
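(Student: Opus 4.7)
The plan is to induct on the skeletal filtration of $X$. For each $n$, let $X^{(n)}$ denote the sub-poset of simplices of dimension $\leq n$, write $F_n := F|_{X^{(n)}}$, and take as inductive hypothesis that the canonical map $\hocolim F_n \to \colim F_n$ is a weak equivalence (the base case $n=-1$ being trivial). The poset $X^{(n)}$ is obtained from $X^{(n-1)}$ by attaching the closed simplices $\bar{\sigma}$ of dimension $n$ along their proper-face sub-posets $\partial \sigma$; in the category of posets this is a pushout, and since $\colim$ preserves colimits it yields
\[
\colim F_n \;\cong\; \colim F_{n-1} \,\cup_{\bigoplus_{\dim\sigma=n} L_\sigma F}\, \bigoplus_{\dim\sigma=n} F(\sigma),
\]
where $L_\sigma F = \colimtwo{a\in X}{a\rightarrow\sigma,a\neq\sigma} F(a)$ is the latching object at $\sigma$. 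By the Reedy cofibrancy assumption, each $L_\sigma F \hookrightarrow F(\sigma)$ is a degreewise split injection and hence a cofibration in $\ChAb$, so this pushout is a homotopy pushout.

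A parallel decomposition holds on the homotopy side: the simplicial replacement $\srep(F_n)$ admits a filtration whose $n$-th layer is glued to the $(n-1)$-st along $\bigoplus_{\dim\sigma=n} \hocolim F|_{\partial\sigma} \to \bigoplus_{\dim\sigma=n} F(\sigma)$, and taking realisations produces a homotopy pushout for $\hocolim F_n$. Since each $\partial\sigma$ is a simplicial complex of strictly lower dimension on which $F$ restricts to a Reedy cofibrant diagram, the inductive hypothesis identifies $\hocolim F|_{\partial\sigma}$ with $\colim F|_{\partial\sigma} = L_\sigma F$. The gluing lemma for homotopy pushouts then produces the desired weak equivalence $\hocolim F_n \xrightarrow{\simeq} \colim F_n$. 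If $X$ is not finite-dimensional, one passes to the filtered colimit in $n$: each transition $\colim F_{n-1} \hookrightarrow \colim F_n$ is a cofibration (being a pushout of one), so the sequential colimit agrees with the sequential homotopy colimit, and the layerwise equivalences assemble.

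The step I expect to be the main obstacle is verifying that the simplicial replacement really does decompose along the skeletal filtration in this clean way, i.e.\ matching the portion of $\srep(F_n)$ supported on chains whose top vertex is a new $n$-simplex $\sigma$ with the realisation of $\srep(F|_{\partial\sigma \cup \{\sigma\}})$. This reduces to a cofinality argument for nerves of over-categories: the category $\{a \to \sigma \mid a \lneq \sigma\}$ is cofinal inside the relevant slice, and its nerve is the cone on the nerve of $\partial\sigma$, which lets one separate out the $\sigma$-contribution from the latching contribution. Once this bookkeeping is in place, the comparison of the two homotopy pushouts is formal.
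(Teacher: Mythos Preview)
The paper does not actually prove this proposition: it is stated in the Appendix as one of two ``fairly basic'' facts for readers familiar with homotopy limits and model categories, with a reference to Hirschhorn \cite{Hi03} for details. So there is no in-paper argument to compare against.

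Your skeletal-induction argument is the standard proof for direct Reedy categories (of which a simplicial complex, ordered by face inclusion, is an example): build $\colim F$ and $\hocolim F$ degree by degree, use Reedy cofibrancy to ensure the colimit side is assembled by homotopy pushouts, and compare via the gluing lemma. This is essentially the argument one finds in Hirschhorn (Ch.~15) or in Dugger's primer, so your proposal is in line with what the paper is implicitly invoking. A slicker packaging of the same content: for a direct category the Reedy and projective model structures on diagrams coincide, so a Reedy cofibrant diagram is projectively cofibrant, and since $\colim$ is left Quillen for the projective structure it computes the homotopy colimit on such diagrams. That version hides exactly the induction you spelled out.

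One small point: the canonical comparison map naturally goes $\hocolim F \to \colim F$ (from the augmentation of the simplicial replacement), not the other way as printed in the paper's statement; you already use the correct direction in your argument, so this is just a typographical issue in the source rather than anything substantive. The step you flag as the main obstacle---that $\hocolim$ decomposes along the skeletal pushout---does require the cofinality/nerve bookkeeping you describe, but it is standard and your sketch is on the right track; alternatively one can appeal directly to the fact that the inclusion $\partial\sigma \hookrightarrow \bar\sigma$ is a sieve, for which left Kan extension and hence $\hocolim$ behave well under gluing.
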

The dual statements for homotopy limits are also valid.\\

What we also made use of in the main body of the article is a homotopy version of
a coend. Following \cite[IX,6]{Mac98} the coend of a given functor
\[
F:\mathcal{C}^{op}\times\mathcal{C}\rightarrow \mathcal{D}
\]
might be described as a colimit over the twisted arrow category $\mathcal{C}^\natural$ in the following way. Let the objects of $\mathcal{C}^\natural$ be morphisms $f:a\rightarrow b$ of $\mathcal{C}$. The morphisms between $f$ and $g$ are pairs of morphisms $(h,j)$ in $\mathcal{C}$ satisfying $jgh=f$. There is a target-source functor $ts$ from $\mathcal{C}^\natural$ to $\mathcal{C}^{op}\times\mathcal{C}$. The bifunctor $F$ gives rise to a functor $F^\natural:\mathcal{C}^\natural\rightarrow\mathcal{D}$
via
\[
\begindc{0}[6]
\obj(10,20)[1]{$\mathcal{C}^\natural$}
\obj(10,10)[2]{$\mathcal{C}^{op}\times\mathcal{C}$}
\obj(20,10)[3]{$\mathcal{D}$}
\mor{1}{2}{$ts$}[-1,0]
\mor{2}{3}{$F$}[-1,0]
\mor{1}{3}{$F^\natural$}[1,0]
\enddc
\]

The coend of $F$ can be defined via
\[
\coend{a\in\mathcal{C}} F(a,a) = \colim[(a\rightarrow b) \in \mathcal{C}^\natural] F^\natural(a\rightarrow b)=\colim[(a\rightarrow b) \in \mathcal{C}^\natural] F(b,a).
\] 
In the same spirit define the homotopy coend.
\begin{definition}
 Let $\mathcal{C}$ be a small category and $F$ a functor from $\mathcal{C}^{op}\times\mathcal{C}$ to chain complexes of abelian groups $\ChAb$. Its \textit{homotopy coend} is defined by
\[
\hocoend{a\in\mathcal{C}} F=\hocoend{} F = \hocolim F^\natural=\hocolim[(a\rightarrow b)\in\mathcal{C}^\natural]F(b,a).
\]
\end{definition}

\bibliographystyle{amsalpha}
\bibliography{Biblio}

\end{document}